\documentclass[sn-mathphys,Numbered]{sn-jnl} 

\setlength{\textheight}{8.65in}
\setlength{\oddsidemargin}{.125in}
\setlength{\evensidemargin}{.125in}
\setlength{\textwidth}{6.25in}

\usepackage{tikz}
\usetikzlibrary{arrows} 
\usepackage{pgfplots}

\usepackage{anyfontsize}
\usepackage{graphicx}  
\usepackage{amsthm,amsmath,amssymb,amsfonts} 
 
\usepackage{mathrsfs} 
\usepackage[title]{appendix} 
\usepackage{xcolor} 
\usepackage{float}
\usepackage{textcomp} 
\usepackage{manyfoot}  
\usepackage{algorithm} 
\usepackage{algorithmicx} 
\usepackage{algpseudocode}  
\usepackage{array}
 \usepackage{caption}
 \usepackage{subcaption}
 \usepackage[utf8]{inputenc}
\usepackage{pdflscape}
\usepackage{enumerate}
\usepackage{lineno}
\usepackage{framed}   
\usepackage{booktabs}
\usepackage{geometry}  
   
\usepackage{multirow}
\usepackage{tabulary}

\newtheorem{theorem}{Theorem}[section]
 
\newtheorem{lemma}{Lemma}[section] 
\newtheorem{corollary}{Corollary}[section]  
 
\newtheorem{remark}{Remark}[section] 
 
\newtheorem{definition}{Definition}[section]

\raggedbottom

\makeatletter
\newcommand{\thickhline}{%
    \noalign {\ifnum 0=`}\fi \hrule height 1pt
    \futurelet \reserved@a \@xhline
}

\newcommand{\argmin}{\text{argmin}}
\newcommand{\la}{\left\langle} 
\newcommand{\ra}{\right\rangle}

\begin{document}
\title[Cubic Regularized Newton Method for Vector Optimization]{Cubic Regularization Technique of the Newton Method for Vector Optimization}

\author[1]{\fnm{Debdas} \sur{Ghosh}}\email{debdas.mat@iitbhu.ac.in}

\affil[1]{\orgdiv{Department of Mathematical Sciences}, \orgname{Indian Institute of Technology (BHU)}, \orgaddress{\city{Varanasi}, \postcode{221005}, \state{Uttar Pradesh}, \country{India}}}

\abstract{This study proposes a cubic regularization of the Newton method for generating weakly efficient points of unconstrained vector optimization problems under no convexity assumption on the objective function. It is observed that at a given iterate, the cubic regularized Newton direction is not necessarily a descent direction. In generating the sequence of iterates, no line search is utilized to find a suitable step length to move along the cubic regularized Newton direction. Yet, the proposed method exhibits a global convergence property with $O(k^{-2/3})$ rate of convergence. Further, the local q-quadratic convergence of the Newton method is also retained in the cubic regularization. A new stopping condition 
is used, which enforces the proposed method to enter in close 
neighborhood of non-weakly efficient points that are stationary. Thus, the studied technique ends up generating weakly efficient points, not just Pareto critical points. In addition, conditions on the choice of regularization parameter value under which the full cubic regularized Newton step becomes descent are derived. Performance profiles and comparison of the derived method with the existing methods on several test examples are also provided. }

\keywords{Newton method, Vector optimization, Cubic regularization, Quadratic convergence, Lipschitz Hessian, Non-convex programming}

\pacs[MSC Classification]{90C29, 90C26, 49M15, 49M37}

\maketitle

\section{Introduction} \label{introduction}
In this article, we are interested in generating weakly efficient points of the following vector optimization problem  
\begin{align}\tag{VOP} \label{vop}
\min_{x \in S} ~ (f_1(x), f_2(x), \ldots, f_p(x))^\top     
\end{align}
with respect to the partial ordering induced by a pointed closed convex cone $K \subset \mathbb{R}^p$, where $S \subseteq \mathbb{R}^n$ is a non-empty closed convex set with non-empty interior. We denote the vector-valued objective function of the problem \eqref{vop} by $f: S \to \mathbb{R}^p$, i.e.,  
\[f(x) := (f_1(x), f_2(x), \ldots, f_p(x))^\top. \]

The vector optimization problem \eqref{vop} under the canonical ordering cone, i.e., $K = \mathbb{R}^p_+$, is commonly referred to as a multi-objective optimization problem. Over the years, there have been many parameter-dependent scalarization methods \cite{ghosh2014new}, such as weighted-sum, $\varepsilon$-constraint, normal boundary intersection, normal constraint, physical programming, direct search domain, ideal cone, etc., to identify weakly Pareto optimal points or weakly efficient solutions of multi-objective optimization problems. However, all these techniques are found to be heavily dependent on parameters outside the problem data. In 2000, the pioneering work of Fliege and Svaiter \cite{fliege2000steepest} opened up a research direction of developing gradient-based parameter-free optimization methods for multi-objective and vector optimization. After the seminal work of Fliege and Svaiter \cite{fliege2000steepest}, there has been extensive research in the recent past on generalizing the conventional gradient and Hessian-based descent methods for vector optimization.

\subsection{Literature survey of parameter-free methods}
In \cite{fliege2000steepest}, steepest descent methods for constrained and unconstrained multi-objective optimization and their implementation strategy have been developed. Drummond and Iusem \cite{drummond2004projected} recollected the idea of using a non-linear scalarizing function to settle the computational issues with vector-valued functions and proposed a projected gradient method for constrained optimization of vector-valued functions. Drummond and Svaiter \cite{drummond2005steepest} introduced a $K$-steepest descent (Cauchy-like) method for smooth unconstrained vector optimization, where the concepts of $K$-critical points and ``Drummond-Svaiter non-linear scalarizing function" have been introduced.  Chuong and Yao \cite{chuong2012steepest} used the ``oriented distance" non-linear scalarizing function to find a steepest descent scheme for the vector optimization problem in Banach spaces, and provided general global convergence analysis. A worst-case complexity analysis of the steepest descent method for multi-objective optimization has been reported in \cite{fliege2019complexity}, where it has been found that for non-convex case, the global convergence rate is $O(k^{-1/2})$, for convex case it is $O(k^{-1})$, and for strongly convex case it is $O(\gamma^k)$ for some $\gamma \in (0, 1)$. To improve the performance of the steepest descent method, a generic majorization-minimization scheme has been recently provided in \cite{chen2024descent}, where it has been shown that the slow convergence rate is due to the large gap between the surrogate and the objective function; to reduce it, they recommended a strategic choice of the base of the dual of the ordering cone.    \\

In \cite{fliege2009newton}, an extension of Newton's method has been provided for unconstrained multi-objective optimization with the help of max-ordering non-linear scalarization function, where the local superlinear convergence rate is established; under Lipschitz continuity of the Hessian, the Kantorovich-like theorem on local q-quadratic convergence rate is reported. Drummond et al. \cite{drummond2014quadratically} extended the multi-objective Newton method of \cite{fliege2009newton} to the vector case and derived the q-quadratic local convergence. In \cite{wang2012regularized}, a Levenberg-Marquardt-type regularization of the Newton method for multi-objective optimization has been proposed to realize the global convergence under no convexity assumption. Wang et al. \cite{wang2019extended} further extended the Newton method of \cite{fliege2009newton} with the help of a majorizing function technique and extended the Armijo, Goldstein, and Wolfe line searches for multi-objective optimization to enhance the local convergence to the global quadratic convergence of the multi-objective Newton method. However, the majorization approach in  \cite{wang2019extended} is applicable for convex problems. Thus, Gonçalves et al. \cite{goncalves2022globally} derived two Newton methods for multi-objective optimization that are globally convergent and applicable to non-convex problems. They used non-monotone line searches and direction safeguard strategies with the help of the norm of the steepest-descent direction to achieve global convergence. Lu and Chen \cite{lu2014newton} analyzed exact and inexact versions of the Newton method for vector optimization, where to find the Newton direction, the scalarizing function is distributed to both the linear and quadratic terms of the local quadratic approximation of the objective function. \\

In the literature, several quasi-Newton methods, non-linear conjugate gradient methods, and trust-region methods have been proposed to enhance the applicability and global convergence of the Newton method. For a detailed survey on the quasi-Newton method for multi-objective optimization, we refer to \cite{kumar2023quasi,prudente2022quasi} and their references. However, to realize the efficiency of the quasi-Newton methods, we require tactful Hessian approximations, which may not be easy if the nature of the function cannot be apriori estimated. After the introduction of the standard non-linear conjugate methods for multi-objective optimization by Pérez and Prudente \cite{perez2018nonlinear}, there have been many articles on this topic; we refer to the recent article \cite{yahaya2025new} and its references for a review on conjugate gradient methods. Although conjugate gradient methods have the global convergence property for non-convex problems, there has been no convergence rate analysis of the non-linear conjugate gradient methods for vector optimization so far. However, recently, Lapuci \cite{lapucci2024convergence} showed that the worst-case convergence rate of gradient-related methods for non-convex vector optimization is $O(k^{-1/2})$. The trust region method for multi-objective optimization was first derived by Qu et al. \cite{qu2013trust} and subsequently extended by Villacorta et al. \cite{villacorta2014trust} where the problem is taken non-smooth.  Carrizo et al. \cite{carrizo2016trust} found that the standard trust-region approach for multi-objective optimization does not possess the descent property, and thus they proposed to add a linear constraint by which a globally convergent descent trust region method has been derived. 
Recently, Mohammadi and Custódio \cite{mohammadi2024trust} proposed a trust region method by employing the strategies of the extreme point step and scalarization step, where the first try to reach the extremities of the Pareto surface, and the latter fills out the entire Pareto region by inserting adequate intermediate points. By the similar two steps, in \cite{mohammadi2025trust}, a black-box trust region method for multi-objective optimization has been proposed without using the derivative and Hessian of the objective function. For a special trust region method, Garmanjani \cite{garmanjani2023complexity} showed that for the general convex problem, the rate of convergence is $O(k^{-1})$. \\

Among the other popular (sub)gradient-based methods for (non-smooth and composite) vector optimization, proximal gradient methods and their complexity analysis have been detailed in \cite{tanabe2012composite, zhao2024convergence, zhao2025proximal}. Cruz \cite{cruz2013subgradient} proposed and analyzed the complexity of a subgradient method for vector optimization without directly employing a non-linear scalarizing function. It has been found that, in general, the proximal gradient and subgradient methods for vector optimization do not possess the descent property, yet they have the global convergence property. 
For a detailed survey of the gradient-based descent methods for multi-objective optimization, we refer to the article by Fukuda and Drummond \cite{fukuda2014survey}. A detailed survey on the complexity bound of the existing methods for multi-objective optimization has been given in \cite{custodio2021worst}.

\subsection{Motivation}
We see from the literature on the parameter-free methods for vector optimization that several modifications of the Newton method have been done to improve its convergence for non-convex problems, but the cubic regularization of the Newton method has not been attempted so far. However, after the seminal work of cubic regularization of the Newton method for scalar optimization by Nesterov and Polyak \cite{nesterov2006cubic}, the strategy is well-known to be a powerful and theoretically appealing alternative in non-convex and high-dimensional settings. Unlike conjugate gradient and quasi-Newton methods---which primarily rely on first-order or approximate second-order information and may struggle with saddle points or ill-conditioning---cubic regularization explicitly incorporates a third-order term in the model to control the full cubic regularized step more robustly and ensure better global convergence guarantees. The method constructs a local model of the objective function by augmenting the second-order Taylor expansion with a cubic term, which acts as a natural and adaptive penalization for large steps that effectively balances the local curvature and global surrogate function. Compared to trust-region methods, which enforce a hard constraint on the step size through a region radius, cubic regularization introduces a smooth penalization that allows more flexible and theoretically sound step adjustments. This regularization not only prevents overly aggressive steps in poorly conditioned directions but also leads to strong worst-case iteration complexity bounds that outperform many standard methods, especially in escaping saddle points and converging to second-order stationary points.

\subsection{Work done}
In this article, we thus propose a cubic regularization of the Newton method for vector optimization where the objective function of the \eqref{vop} is twice continuously differentiable and has Lipschitz Hessian but not necessarily convex. The article is presented in the following sequence. The next section presents the notations and basic results of vector optimization that are used in the entire article. Section \ref{section-cubic-regularization} derives a cubic Newton direction and its computational strategies. A step-wise algorithm and its convergence behavior are also provided in the same section. In Section \ref{section-numerical-experiment}, the numerical performance of the method is shown on some test problems, where we also exhibit a comparison of the performance with the existing method through Dolan-Mor{\'e} performance profiles. The study is concluded in Section \ref{section-conclusion} by providing a few final remarks and potential future scopes.   \\

\section{Preliminaries} 
In this section, we provide notations, basic definitions and results, and the assumptions on \eqref{vop}, which are used throughout the study.

\begin{itemize}
\item All elements of Euclidean spaces $\mathbb{R}^n$ or $\mathbb{R}^p$ are presented by column vectors. 

\item The notations $\la \cdot, \cdot \ra$, and $\|\cdot\|$ present the usual inner product and the corresponding norm, respectively, of the elements of these Euclidean spaces. 

\item The notations int$(K)$ and int$(S)$ represent the interior of $K$ and $S$, respectively. 

\item The unit sphere in $\mathbb{R}^p$ is denoted by $\mathbb{S}^{p - 1}$. 

\item The set of all non-negative real numbers is presented by $\mathbb{R}_+$.  

\item $I_n$ is the identity matrix of order $n \times n$. 

\item The cubic regularized Newton step at $x \in S$ is represented by $d_M(x)$. Also, we denote $r_M(x) := \|d_M(x)\|$ and $r_{M_k}(x^k) := \|d_{M_k}(x^k)\|$. 

\item A generic element of the cone $K$ or of its (positive) polar cone $K^*$ is denoted by $\eta := (\eta_1, \eta_2, \ldots, \eta_p)^\top$ or $\xi := (\xi_1, \xi_2, \ldots, \xi_p)^\top$. Also, we present $\eta^i := (\eta^i_1, \eta^i_2, \ldots, \eta^i_p)^\top$. 

\item The subdifferential of a mapping $\theta : S \to \mathbb{R}$ at $x \in S$ is presented by $\partial \theta (x)$.  

\item For the twice differentiable function $f: S \to \mathbb{R}^p$, the Jacobian and Hessian at $x \in S$ are presented by 
\[Jf(x) : = 
\begin{pmatrix}
    \nabla f_1(x) ^\top \\ 
    \nabla f_2(x) ^\top  \\
    \vdots \\ 
    \nabla f_p(x) ^\top 
\end{pmatrix} 
~\mathrm{ and }~ 
\nabla^2 f(x) : = 
\begin{pmatrix}
    \nabla^2 f_1(x) \\ 
    \nabla^2 f_2(x)  \\
    \vdots \\ 
    \nabla^2 f_p(x) 
\end{pmatrix},  
\]
respectively. Note that $Jf(x)$ is a $p \times n$ matrix, and $\nabla^2 f(x)$ is a $pn \times n$ matrix. 
 
\item At an $x \in S$, for a given $d$ in $\mathbb{R}^n$, the notations $Jf(x) d$, $d^\top \nabla^2 f(x)$, and $d^\top \nabla^2 f(x) d$ represent  
\[Jf( x) d: = 
\begin{pmatrix}
    \nabla f_1(x) ^\top d \\ 
    \nabla f_2(x) ^\top d \\
    \vdots \\ 
    \nabla f_p(x) ^\top d
\end{pmatrix},~ 
d^\top \nabla^2 f(x): = 
\begin{pmatrix}
    d^\top \nabla^2 f_1(x) \\ 
    d^\top \nabla^2 f_2(x)\\
    \vdots \\ 
    d^\top \nabla^2 f_p(x)
\end{pmatrix}  
~\mathrm{ and }~
d^\top \nabla^2 f(x) d: = 
\begin{pmatrix}
    d^\top \nabla^2 f_1(x) d \\ 
    d^\top \nabla^2 f_2(x) d \\
    \vdots \\ 
    d^\top \nabla^2 f_p(x) d
\end{pmatrix}, 
\]
respectively. Note that $Jf( x) d$ is an element of $\mathbb{R}^p$, $d^\top \nabla^2 f(x)$ is a matrix of order $ p \times n$, and $d^\top \nabla^2 f(x)d$ is an element of $\mathbb{R}^p$.

\item At an $x \in S$, for a given $\xi := (\xi_1, \xi_2, \ldots, \xi_p)^\top$ in $K$ or $K^*$, the notations $\la Jf(x) d, \xi \ra $ and $\la d^\top \nabla^2 f(x) d, \xi \ra $ are defined by
\begin{align*}
& \la Jf(x) d, \xi \ra : = \xi_1 \nabla f_1(x) ^\top d + \xi_2 \nabla f_2(x) ^\top d + \cdots + \xi_p \nabla f_p(x) ^\top d \\ 
\text{ and } &  
\la d^\top \nabla^2 f(x) d, \xi \ra : = \xi_1 d^\top \nabla^2 f_1(x) d + \xi_2 d^\top \nabla^2 f_2(x) d + \cdots + \xi_p d^\top \nabla^2 f_p(x) d,~ \mathrm{ respectively. } 
\end{align*}

\item At an $x \in S$, for a given $\xi := (\xi_1, \xi_2, \ldots, \xi_p)^\top$ in $K$ or $K^*$, the notations $\la Jf(x), \xi \ra $ and $\la \nabla^2f(x), \xi \ra$ are given by 
\begin{align*}
& \la Jf(x), \xi \ra : = \xi_1 \nabla f_1(x)^\top + \xi_2 \nabla f_2(x)^\top + \cdots + \xi_p \nabla f_p(x)^\top \\ 
\text{ and } &  \la \nabla^2 f(x), \xi \ra : = \xi_1 \nabla^2 f_1(x) + \xi_2 \nabla^2 f_2(x) + \cdots + \xi_p \nabla^2 f_p(x), \text{ respectively}.  
\end{align*}

\item Eigenvalues of a symmetric matrix $P$ of order $n \times n$ are denoted by 
$\lambda_1(P), \lambda_2(P), \ldots, \lambda_n(P)$, where 
\[\lambda_1(P) \le \lambda_2(P) \le \cdots \le \lambda_n(P). \]

\item In this study, the norm of a symmetric matrix $P$ is taken as $\|P\|:=  |\lambda_n(P)|$. 

\end{itemize}

\begin{definition} \emph{(Partial ordering in $\mathbb{R}^p$ \cite{drummond2014quadratically})}.  
The cone $K \subset \mathbb{R}^p$ induces the following partial ordering $(\preceq)$ and strict  ordering $(\prec):$ for $u, v \in \mathbb{R}^p$, 
\[ 
u \preceq v \Longleftrightarrow u - v \in -K ~~\mathrm{ and }~~ u \prec v \Longleftrightarrow u - v \in - \mathrm{int}(K). 
\]
The notations $u \succeq v$ and $u \succ v$ represent $v \preceq u$ and $v \prec u$, respectively.  
\end{definition}

\medskip
\begin{definition}
\emph{(Weakly efficient point \cite{drummond2014quadratically})}. 
A point $\bar x \in S$ is called a weakly efficient point of \eqref{vop} if there is no $x \in S$ such that $f(x) \prec f(\bar x)$. \\ 
\end{definition}

\begin{definition}
\emph{(Pareto critical or stationary point)}. Let $f:S \to \mathbb{R}^p$ be continuously differentiable. A point $\bar x \in S$ is called a Pareto critical point or a stationary point of \eqref{vop} if 
\[ R(Jf(\bar x)) \cap (- \mathrm{int}(K)) = \emptyset, \]
\medskip
where $R(Jf(\bar x))$ is the range space of the linear map $Jf(\bar x) : \mathbb{R}^n \to \mathbb{R}^p$. 
\end{definition}

\medskip
\begin{lemma} \emph{\cite{drummond2014quadratically}} Let $f:S \to \mathbb{R}^p$ be continuously differentiable. If $\bar x \in S$ is a weakly efficient point of \eqref{vop}, then $\bar x$ is a stationary point of \eqref{vop}. 
\end{lemma}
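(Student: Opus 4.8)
The plan is to argue by contraposition: I would assume that $\bar x$ is \emph{not} a stationary point of \eqref{vop} and exhibit a direction along which $f$ strictly decreases, thereby contradicting the weak efficiency of $\bar x$. Since $\bar x$ fails to be stationary, the definition gives $R(Jf(\bar x)) \cap (-\mathrm{int}(K)) \neq \emptyset$, so there exists some $d \in \mathbb{R}^n$ with $Jf(\bar x) d \in -\mathrm{int}(K)$. Recalling that $Jf(\bar x) d$ is precisely the vector of directional derivatives $(\nabla f_1(\bar x)^\top d, \ldots, \nabla f_p(\bar x)^\top d)^\top$, I would then study the points $\bar x + t d$ for small $t > 0$.

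The second step would be a first-order expansion. Because $f$ is continuously differentiable, for each $t > 0$ small enough that $\bar x + t d \in S$ one has, componentwise,
\[
\frac{f(\bar x + t d) - f(\bar x)}{t} \longrightarrow Jf(\bar x) d \quad \text{as } t \to 0^+ .
\]
Since $Jf(\bar x) d \in -\mathrm{int}(K)$ and $-\mathrm{int}(K)$ is an \emph{open} set, for all sufficiently small $t > 0$ the difference quotient on the left also lies in $-\mathrm{int}(K)$. Using that $-\mathrm{int}(K)$ is a cone, hence invariant under multiplication by the positive scalar $t$, this yields $f(\bar x + t d) - f(\bar x) \in -\mathrm{int}(K)$, that is, $f(\bar x + t d) \prec f(\bar x)$, contradicting the weak efficiency of $\bar x$.

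Two points would need care. First, the feasibility of $\bar x + t d$: this is immediate when $\bar x \in \mathrm{int}(S)$ (or in the unconstrained setting $S = \mathbb{R}^n$), and otherwise one must restrict to $d$ a feasible direction, which I would flag explicitly. Second, the clean absorption of the remainder term, which is handled precisely by the openness of $-\mathrm{int}(K)$ together with its cone structure rather than by any ad hoc estimate. I expect the main (though mild) obstacle to be exactly this interplay --- arguing that the little-$o$ remainder in the Taylor expansion cannot push the difference quotient out of the open cone $-\mathrm{int}(K)$ --- while the remainder of the argument is a direct unwinding of the definitions.
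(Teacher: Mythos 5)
Your argument is correct and is the standard one; note that the paper itself states this lemma with only a citation to \cite{drummond2014quadratically} and gives no proof, so there is nothing internal to compare against. The contrapositive via the first-order difference quotient, the openness of $-\mathrm{int}(K)$ to absorb the remainder, and the cone property to rescale by $t>0$ are all handled properly. Your caveat about feasibility of $\bar x + t d$ is the right one to flag: the stationarity notion used here ($R(Jf(\bar x)) \cap (-\mathrm{int}(K)) = \emptyset$) is the unconstrained one, and the lemma is applied in this paper to iterates lying in $\mathrm{int}(S)$, where the issue disappears.
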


\medskip
\begin{framed}
\noindent 
Throughout the article, we assume the following conditions to generate an iterative scheme for generating weakly efficient points of the problem \eqref{vop}.

\medskip
\begin{enumerate}[(i)] 
\item The objective function $f: S \to \mathbb{R}^p$ is twice continuously differentiable in $S$. 
\item \label{assumption_ii} For each of $f_1, f_2, \ldots, f_p$, the Hessian is Lipschitz continuous on $S$ with a common Lipschitz constant $L > 0$, i.e., for each $i = 1, 2, \ldots, p$, 
\[\| \nabla^2 f_i(x) - \nabla^2 f_i(y) \| \le L \|x - y\| \text{ for all } x, y \in S. \]
\item The set $S$ is so large that it contains the level set 
\[\mathcal{L}(f(x^0)) := \{x \in \mathbb{R}^n: f(x) \preceq f(x^0)\}\]
for the chosen initial point $x^0$ in int$(S)$. \item The ordering cone $K$ is finitely generated, i.e., there exists a compact set $C \subset K^* \cap \mathbb{S}^{p - 1}$ such that cone(conv$(C)) = K^*$, where $$\mathbb{S}^{p - 1} := \{\xi \in \mathbb{R}^p: \|\xi\| = 1\} \text{ and } 
K^* := \{ u \in \mathbb{R}^p : \la u, \xi \ra \ge 0 \text{ for all } \xi \in K \}.  $$  
We let $C := \{\xi^1, \xi^2, \ldots, \xi^\ell\}$.   
\end{enumerate}

\end{framed}

\medskip
The cone $K$ and its interior can be presented by the above subbase $C$ of $K^*$ in the following way: 
\begin{align*} 
K & = \{ u \in \mathbb{R}^p : \la u, \xi \ra \ge 0 \text { for all } \xi \in C\} \\ 
\text{ and } \mathrm{int}(K) & = \{ u \in \mathbb{R}^p : \la u, \xi \ra > 0 \text { for all } \xi \in C\}.
\end{align*}

Note that if $\bar x \in S$ is a minimum point of the function $\la f(x), \bar \eta \ra$ in $S$, for some $\bar \eta \in C$, then 
\begin{align}
& \la f(\bar x), \bar \eta \ra \le \la f(x), \bar \eta \ra \text{ for all } x \in S \notag \\ 
\implies & \la f(\bar x) - f(x), \bar \eta \ra \le 0  \text{ for all } x \in S \notag. 
\end{align}
This implies that  
\begin{align}
 & \not\exists ~ x \in S : \la f(\bar x) - f(x), \eta \ra > 0 \text{ for all } \eta \in C \notag \\ 
\implies & \not\exists ~ x \in S : f(x) \prec f(\bar x),  \notag   
\end{align}
i.e., $\bar x$ is a weakly efficient point of $\min_{x \in \mathbb{R}^n} f(x)$.  Thus, we have the following result. \\ 

\begin{lemma}\label{for_one_xi_is_weak_min}
If $\bar x$ is a minimum point of the function $\la f(x), \eta \ra$ in $S$ for some $\eta$ in $C$, then $\bar x$ is a weakly efficient point of $\min_{x \in S} f(x)$. \\ 
\end{lemma}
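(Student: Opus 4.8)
The plan is to argue by contraposition (equivalently, by contradiction), leveraging the characterization of $\mathrm{int}(K)$ in terms of the subbase $C$ that was recorded just above the statement. First I would suppose that $\bar x$ is \emph{not} a weakly efficient point of $\min_{x \in S} f(x)$. By the definition of a weakly efficient point, this furnishes some $x \in S$ with $f(x) \prec f(\bar x)$, i.e., $f(\bar x) - f(x) \in \mathrm{int}(K)$.

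The central observation is that membership in $\mathrm{int}(K)$ can be tested against the finite subbase $C = \{\xi^1, \xi^2, \ldots, \xi^\ell\}$. Using the identity $\mathrm{int}(K) = \{u \in \mathbb{R}^p : \la u, \xi \ra > 0 \text{ for all } \xi \in C\}$, the relation $f(\bar x) - f(x) \in \mathrm{int}(K)$ is equivalent to $\la f(\bar x) - f(x), \xi \ra > 0$ for every $\xi \in C$. In particular, this strict inequality must hold for the specific generator $\eta \in C$ against which $\bar x$ is assumed to minimize the scalarization, yielding $\la f(\bar x) - f(x), \eta \ra > 0$, that is, $\la f(x), \eta \ra < \la f(\bar x), \eta \ra$.

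This, however, contradicts the hypothesis that $\bar x$ is a minimum point of $\la f(\cdot), \eta \ra$ over $S$, which forces $\la f(\bar x), \eta \ra \le \la f(x), \eta \ra$ for all $x \in S$, and in particular for the point $x$ produced above. Hence no such $x$ can exist, so there is no $x \in S$ with $f(x) \prec f(\bar x)$, and $\bar x$ is a weakly efficient point.

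Since every step is a direct translation between the cone order $\preceq$, the strict order $\prec$, and the subbase characterization of $\mathrm{int}(K)$, there is no genuinely hard step here; the only point requiring care is to invoke the subbase identity for $\mathrm{int}(K)$ (rather than the one for $K$ itself), so that the \emph{strict} inequality is retained and can be evaluated at the single distinguished generator $\eta$. I would note that this is exactly the chain of implications sketched immediately before the lemma, reorganized into a clean contradiction argument.
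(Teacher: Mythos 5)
Your argument is correct and coincides with the paper's own justification, which appears as the chain of implications displayed immediately before the lemma: minimality of $\la f(\cdot), \eta \ra$ forces $\la f(\bar x) - f(x), \eta \ra \le 0$ for all $x \in S$, so no $x$ can satisfy the strict inequality against every element of $C$, hence no $x$ with $f(x) \prec f(\bar x)$. You have merely recast that same reasoning as a contradiction, so there is nothing to add.
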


\begin{definition}
\emph{(Descent direction \cite{drummond2014quadratically}).} At an $x \in S$, the direction $d \in \mathbb{R}^n$ is called a $K$-descent direction of $f: S \to \mathbb{R}^p$ (or simply a descent direction) if there exists $\bar \alpha > 0$ such that 
\[ f(x + \alpha d) \prec f(x) \text{ for all } \alpha \in (0, \bar \alpha]. \]
\end{definition}

Under the assumption (\ref{assumption_ii}), we have the following result, which will be useful throughout the paper. \\

\begin{lemma}
For any $x$ and $y$ from $S$, there hold the following inequalities for all $\xi$ in $C$:  
\begin{align}
& \left\| \la Jf(y) - Jf(x) - (y - x)^\top \nabla^2 f(x),~ \xi \ra \right\| \le \frac{L}{2} \|y - x\|^2   \label{square_equation} \\  
\text{ and } & \left| \la f(y) - f(x) - Jf(x) (y - x) - \frac{1}{2} (y - x)^\top \nabla^2 f(x) (y - x), ~\xi \ra \right| \le \frac{L}{6} \|y - x\|^3. \label{cube_equation} 
\end{align}
\end{lemma}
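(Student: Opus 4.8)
The plan is to prove both estimates componentwise for the scalar functions $f_1,\dots,f_p$ and then to recombine them through the definition of $\la\,\cdot\,,\xi\ra$. Throughout, I would fix $x,y\in S$, write $d:=y-x$ and $x_t:=x+td$ for $t\in[0,1]$, and note that $x_t\in S$ by the convexity of $S$, so that the Hessian Lipschitz bound of assumption (\ref{assumption_ii}) applies at every $x_t$ with $\|x_t-x\|=t\|d\|$.

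For the first-order remainder in \eqref{square_equation}, I would apply the fundamental theorem of calculus to the gradient map $t\mapsto\nabla f_i(x_t)$ and use the symmetry of the Hessian (so that $d^\top\nabla^2 f_i(x)=(\nabla^2 f_i(x)d)^\top$) to write
\[
\nabla f_i(y)-\nabla f_i(x)-\nabla^2 f_i(x)\,d=\int_0^1\bigl(\nabla^2 f_i(x_t)-\nabla^2 f_i(x)\bigr)d\,dt .
\]
Estimating the integrand by $\|\nabla^2 f_i(x_t)-\nabla^2 f_i(x)\|\,\|d\|\le L\,t\,\|d\|^2$ and using $\int_0^1 t\,dt=\tfrac12$ gives the componentwise bound $\tfrac{L}{2}\|d\|^2$.

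For the second-order remainder in \eqref{cube_equation}, I would use the integral form of Taylor's theorem for the scalar map $t\mapsto f_i(x_t)$ in the shape
\[
f_i(y)-f_i(x)-\nabla f_i(x)^\top d-\tfrac12\,d^\top\nabla^2 f_i(x)\,d=\int_0^1(1-t)\,d^\top\bigl(\nabla^2 f_i(x_t)-\nabla^2 f_i(x)\bigr)d\,dt ,
\]
where the term $\tfrac12\,d^\top\nabla^2 f_i(x)d$ is absorbed using $\int_0^1(1-t)\,dt=\tfrac12$. Bounding the integrand by $(1-t)\,\|d\|^2\,\|\nabla^2 f_i(x_t)-\nabla^2 f_i(x)\|\le L\,t(1-t)\|d\|^3$ and using $\int_0^1 t(1-t)\,dt=\tfrac16$ gives the componentwise bound $\tfrac{L}{6}\|d\|^3$.

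It remains to recombine. By the definitions of $\la Jf(x)d,\xi\ra$, $\la d^\top\nabla^2 f(x)d,\xi\ra$, and the like, each bracketed quantity in \eqref{square_equation} and \eqref{cube_equation} is exactly the $\xi$-weighted sum $\sum_i\xi_i(\cdot)_i$ of the corresponding componentwise expressions; moving the norm (respectively the absolute value) inside the sum by the triangle inequality and substituting the two componentwise bounds finishes the argument. The step I expect to require the most care is this final assembly: the componentwise estimates silently use the operator-norm inequalities $\|Md\|\le\|M\|\|d\|$ and $|d^\top M d|\le\|M\|\|d\|^2$ for the symmetric matrix $M=\nabla^2 f_i(x_t)-\nabla^2 f_i(x)$, and the recombination produces the weight factor $\sum_i|\xi_i|$, so one must confirm that the normalization $\xi\in C\subset\mathbb{S}^{p-1}$ keeps this factor controlled and that the stated constants $\tfrac L2$ and $\tfrac L6$ survive the summation over the $p$ objectives.
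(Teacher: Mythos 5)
Your proof is correct and follows essentially the same route as the paper: integral remainder representations combined with the componentwise Lipschitz--Hessian assumption, the only cosmetic difference being that you use the $(1-t)$ Taylor kernel for the cubic bound (with $\int_0^1 t(1-t)\,\mathrm{d}t=\tfrac16$) where the paper bootstraps it from the quadratic bound applied at $x+t(y-x)$ via $\int_0^1 t^2\,\mathrm{d}t=\tfrac13$, both yielding the constant $\tfrac{L}{6}$. The factor $\sum_i|\xi_i|$ you rightly flag in the recombination is present in the paper's own proof as well, where it is silently written as $\|\xi\|$ and set to $1$; strictly, the componentwise assumption only gives $\|\xi\|_1\in[1,\sqrt{p}]$, so this normalization issue is a blemish shared by the paper rather than a gap specific to your argument.
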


\begin{proof}
Define a function $F: [0, 1] \to \mathbb{R}^p$ by $F(t) := f(x + t(y - x)).$ Then, $F$ is continuously differentiable on $[0, 1]$ and 
\begin{equation}\label{aux1_03_03_25}
f(y) - f(x) = \int_0^1 F'(t) ~\mathrm{d}t = \int_0^1 Jf(x + t(y - x)) (y - x) ~\mathrm{d}t. 
\end{equation}
\medskip
\noindent
Similarly, by defining a function $G: [0, 1] \to \mathbb{R}^{pn}$ as $G(t) := Jf(x + t (y - x))$, we obtain that 
\[Jf(y) - Jf(x) = \int_0^1 (y - x)^\top \left(\nabla^2 f(x + t(y - x))\right)~ \mathrm{d} t. \]
Therefore, 
\begin{align*}
~&~ \left\| \la Jf(y) - Jf(x) - (y - x)^\top \nabla^2 f(x),~ \xi \ra \right\| \\ 
= ~&~ \left\| \la \int_0^1  (y - x)^\top \left( \nabla^2 f(x + t(y - x)) - \nabla^2 f(x) \right) ~ \mathrm{d}t, ~ \xi \ra \right\|  \\ 
\overset{\eqref{assumption_ii}}{\le}  ~&~ L \| x - y \|^2 \| \xi \| \int_0^1 t ~ \mathrm{d}t  =   \frac{L}{2} \| y - x \|^2.  
\end{align*}
Further, 
\begin{align*}
~&~ \left| \la f(y) - f(x) - Jf(x) (y - x) - \frac{1}{2} (y - x)^\top \nabla^2 f(x) (y - x), ~\xi \ra \right| \\ 
\overset{\eqref{aux1_03_03_25}}{=} ~&~ \left| \la \int_0^1 Jf(x + t(y - x))(y - x) \mathrm{d}t - Jf(x)(y - x) - \frac{1}{2}(y - x)^\top \nabla^2f(x) (y - x),~ \xi \ra \right| \\ 
= ~&~ \left| \la \int_0^1 \left( Jf(x + t (y - x)) - Jf(x) - t(y - x)^\top \nabla^2 f(x)\right) (y - x) ~\mathrm{d} t,~ \xi \ra \right| \\
\overset{\eqref{square_equation}}{\le} ~&~ \frac{L}{2} \|y - x\|^2 \| \xi \| \int_0^1 t^2 \|y - x\| ~ \mathrm{d}t \\   
\le ~&~ \frac{L}{6} \|y - x\|^3.      
\end{align*}
\end{proof}

\section{Cubic Regularization} \label{section-cubic-regularization}

The conventional Newton method for vector optimization \cite{drummond2014quadratically} finds the Newton direction at $x$ in int$(S)$ by 
\begin{equation}\label{newton_direction}
d(x):= \underset{d \in \mathbb{R}^n}{\argmin} ~ \underset{\xi \in C}{\max} \left\{ \la Jf(x) d, \xi \ra + \frac{1}{2} \la d^\top \nabla^2 f(x) d, \xi \ra \right\}. 
\end{equation}
If the function $f$ is not strongly $K$-convex, the Newton direction may diverge, or there may be just local convergence. Thus, several quasi-Newton \cite{kumar2023quasi}, conjugate gradient \cite{perez2018nonlinear}, and trust-region \cite{carrizo2016trust} approaches have been proposed in the literature. Several limitations of these methods are mentioned in Section \ref{introduction}. In this study, we thus aim to propose a cubic regularization of the Newton method.

Let $M>0$ be a real parameter. For each $x$ in $S$, we define an auxiliary function $q_M(x, \cdot): \mathbb{R}^n \to \mathbb{R}$ by 
\begin{equation}\label{cubic_function} 
q_M(x, d) := \underset{\xi \in C}{\max} \left\{ \la Jf(x) d, \xi \ra + \frac{1}{2} \la d^\top \nabla^2 f(x) d, \xi \ra \right\} + \frac{M}{6} \|d\|^3.    
\end{equation}
Note that the function $q_M(x, \cdot)$ is well-defined because $C$ is a compact set, and the function inside the max operator is continuous in $d$. However, it must be noted that the function $q_M(x, \cdot)$ is non-smooth. Also, notice that the only difference in the expressions inside the max operators in \eqref{newton_direction} and \eqref{cubic_function} is the appearance of the \emph{cubic} term $\frac{M}{6} \|d\|^3$ in \eqref{cubic_function}. We define a revised Newton direction as 
\begin{equation}\label{cubic_direction} 
d_M(x) := \underset{d \in \mathbb{R}^n}{\argmin} ~ q_M(x, d)
\end{equation} 
and refer the direction $d_M(x)$ as a  \emph{cubic regularized} Newton direction at the point $x$ in int$(S)$. Here, the argmin in \eqref{cubic_direction} indicates that $d_M(x)$ is a \emph{global} minimizer of $q_M(x, \cdot)$, and the equality $(:=)$ in \eqref{cubic_direction} means that $d_M(x)$ is just one of the global minimizers of $q_M(x, \cdot)$. We denote the global minimum value of $q_M(x, \cdot)$ as  $\beta_M(x)$, i.e., 
\begin{align}\label{beta_function}
\beta_M(x) := ~ & \underset{d \in \mathbb{R}^n}{\min} \left( \underset{\xi \in C}{\max} \left\{ \la Jf(x) d, \xi \ra + \frac{1}{2} \la d^\top \nabla^2 f(x) d, \xi \ra \right\} + \frac{M}{6} \|d\|^3 \right)  \\ 
= ~ & \underset{d \in \mathbb{R}^n}{\min} ~ q_M(x, d) = q_M(x, d_M(x)). \notag  
\end{align}

Next, we present some basic results that relate the ``stationarity" or ``Pareto criticality" of a point $x$ with the value of $\beta_M(x)$ and the cubic regularized Newton direction $d_M(x)$.  \\ 

\begin{theorem}\label{stationarity_and_beta_value} 
Let $d_M : S \to \mathbb{R}^n$ and $\beta_M : S \to \mathbb{R}$ be the functions as defined in \eqref{cubic_direction} and \eqref{beta_function}, respectively. Then, the following results hold.  
\begin{enumerate}
\item[\textnormal{(i)}] $\beta_M(x) \le 0$ for any $x \in S$.
\item[\textnormal{(ii)}] If $\bar x \in S$ is not a stationary point of \eqref{vop}, then $\beta_M(\bar x) < 0$ and $d_M(\bar x) \neq 0$.  
\item[\textnormal{(iii)}] If $\beta_M(\bar x) = 0$ for some $\bar x \in S$, then $\bar x$ is a stationary point of \eqref{vop}.
\end{enumerate}
\end{theorem}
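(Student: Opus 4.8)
The plan is to establish the three claims in order, using the fact that (iii) is essentially the contrapositive of (ii). For claim (i) I would evaluate the objective $q_M(x,\cdot)$ at the admissible point $d=0$. Since $Jf(x)\cdot 0 = 0$, the quadratic form $0^\top\nabla^2 f(x)\,0 = 0$, and $\|0\|^3 = 0$, we obtain $q_M(x,0) = \max_{\xi\in C}\{0\} = 0$. Because $\beta_M(x)$ is by definition the \emph{global} minimum of $q_M(x,\cdot)$, it follows immediately that $\beta_M(x) \le q_M(x,0) = 0$.

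For claim (ii), suppose $\bar x$ is not stationary, so that $R(Jf(\bar x)) \cap (-\mathrm{int}(K)) \neq \emptyset$. This gives a direction $d\in\mathbb{R}^n$ with $Jf(\bar x)d \in -\mathrm{int}(K)$, and invoking the representation $\mathrm{int}(K) = \{u : \langle u,\xi\rangle > 0 \text{ for all } \xi\in C\}$ established earlier, this is equivalent to $\langle Jf(\bar x)d, \xi\rangle < 0$ for every $\xi\in C$. Since $C$ is compact and $\xi\mapsto\langle Jf(\bar x)d,\xi\rangle$ is continuous, I would extract a uniform gap $\delta := -\max_{\xi\in C}\langle Jf(\bar x)d,\xi\rangle > 0$ and set $B := \max_{\xi\in C}|\langle d^\top\nabla^2 f(\bar x)d,\xi\rangle|$. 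Substituting $td$ for $t>0$ and bounding the max term by term yields $q_M(\bar x, td) \le -t\delta + \tfrac{t^2}{2}B + \tfrac{M}{6}t^3\|d\|^3$. The negative linear term dominates for all sufficiently small $t$, so there exists $\bar t>0$ with $q_M(\bar x,\bar t d)<0$, and hence $\beta_M(\bar x) \le q_M(\bar x,\bar t d) < 0$. The assertion $d_M(\bar x)\neq 0$ then follows at once: were $d_M(\bar x)=0$, we would have $\beta_M(\bar x) = q_M(\bar x, d_M(\bar x)) = q_M(\bar x,0) = 0$, contradicting $\beta_M(\bar x)<0$.

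Claim (iii) I would then obtain by contraposition of (ii): if $\bar x$ were not stationary, part (ii) would force $\beta_M(\bar x)<0$, which contradicts the hypothesis $\beta_M(\bar x)=0$; therefore $\bar x$ must be stationary. Parts (i) and (iii) are routine, and the only substantive work lies in (ii). The main obstacle I expect is securing the strict inequality $\langle Jf(\bar x)d,\xi\rangle<0$ \emph{uniformly} in $\xi\in C$ — that is, producing the positive gap $\delta$ — so that the linear term genuinely dominates the quadratic and cubic contributions as $t\to 0^+$. This is precisely the point at which compactness of $C$ is indispensable: without a uniform positive $\delta$, one could not conclude that $q_M(\bar x,td)<0$ for small $t$, and the descent estimate would break down.
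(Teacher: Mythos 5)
Your proposal is correct and follows essentially the same route as the paper: part (i) by evaluating $q_M(x,\cdot)$ at $d=0$, part (ii) by scaling a direction $d$ with $Jf(\bar x)d\in-\mathrm{int}(K)$ until the negative linear term dominates, and part (iii) by contraposition of (ii) combined with (i). If anything, your uniform-gap argument ($\delta$ and $B$ over the compact set $C$) handles the dependence on $\xi$ more cleanly than the paper, which fixes $\xi$, analyzes the roots of the resulting quadratic, and only implicitly passes to a $\xi$-independent choice of $\bar t$.
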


\begin{proof} 
(i) 
Note from \eqref{beta_function} that for any $x$ in $S$, 
\[\beta_M(x) = \underset{d \in \mathbb{R}^n}{\min}~ q_M(x, d) \le q_M(x, 0) = 0. \]

\noindent
(ii)  Let $\bar x$ be not a stationary point of \eqref{vop}. Then, there exists $h$ in $\mathbb{R}^n$ such that
$Jf(\bar x) h \in - \text{int} (K)$. Thus, 
\begin{equation}\label{aux_15_12_1} 
\la Jf(\bar x) h, \xi \ra < 0 \text{ for all } \xi \in C. 
\end{equation} 
We show that there exists a real number $\bar t > 0$ such that the direction $\bar h := \bar t h$ satisfies 
\begin{equation}\label{aux_a5_12_2}
\la Jf(\bar x) \bar h, \xi \ra + \frac{1}{2}\la \bar{h}^\top \nabla^2 f(\bar{x}) \bar{h}, \xi \ra + \frac{M}{6} \|\bar{h}\|^3 < 0 \text{ for all } \xi \in C.  
\end{equation}
Fix any $\xi$ in $C$ and define 
\[a := \la Jf(\bar x) h, \xi \ra,~ 
b := \frac{1}{2}\la h^\top \nabla^2 f(\bar{x}) h, \xi \ra, \text{ and } 
c := \frac{M}{6} \|h\|^3. \]
Note from \eqref{aux_15_12_1} that $a < 0$ and $h \neq 0$. As $h \ne 0$, we have $c > 0$. \\
\medskip
Denote the roots of the equation $\frac{a}{c} + \frac{b}{c} t + t^2 = 0$ by 
\[r_1 : = \frac{1}{2} \left( -\frac{b}{c} - \sqrt{\frac{b^2}{c^2} - \frac{4a}{c}}\right) \text{ and } r_2 : = \frac{1}{2} \left( -\frac{b}{c} + \sqrt{\frac{b^2}{c^2} - \frac{4a}{c}}\right). \]
As $\frac{a}{c} < 0 $, evidently $r_1 < 0$ and $r_2 > 0$. So, we notice that 
\begin{align*}
~&~ a + b t + ct^2 < 0 \\     
\Longleftrightarrow ~&~ c (t - r_1)(t - r_2) < 0 \\ 
\Longleftrightarrow ~&~ r_1 < t < r_2. 
\end{align*}
Hence, for any $t$ in $(0, r_2)$, we have 
\begin{align*}
~&~ t(a + bt + c t^2) < 0 \\ 
\text{i.e.,} ~&~ 
t \la Jf(\bar x) h, \xi \ra   
+ \frac{t^2}{2}\la h^\top \nabla^2 f(\bar{x}) h, \xi \ra 
+ \frac{M t^3}{6} \|h\|^3 < 0. 
\end{align*}
Taking $\bar t = \frac{r_2}{2} > 0$ and $\bar h = \bar t h$, we see that 
\[ \la Jf(\bar x) \bar h, \xi \ra   
+ \frac{1}{2} \la \bar{h}^\top \nabla^2 f(\bar{x}) \bar{h}, \xi \ra 
+ \frac{M}{6} \|\bar{h}\|^3 < 0.\]  
Arbitrariness of $\xi$ in $C$ implies \eqref{aux_a5_12_2}. From \eqref{aux_a5_12_2}, we have $q_M(\bar x, \bar h) < 0$. Hence, 
\[ \beta_M(\bar x) = \underset{d \in \mathbb{R}^n}{\min} ~ q_M(\bar x, d) \leq q_M(\bar x, \bar h) < 0. \]
As $\beta_M(\bar x) = q_M(\bar x, d_M(\bar x))$ and $\beta_M(\bar x) < 0$, we trivially have $d_M(\bar x) \neq 0$. Hence, the result follows. \\

\noindent
(iii) From the contrapositive statement of (ii), we observe that if $\beta_M(\bar x) \ge 0$, then $\bar x$ is a stationary point of \eqref{vop}. However, from (i), $\beta_M(\bar x) \ge 0$ is equivalent to $\beta_M(\bar x) = 0$. Hence, the statement in (iii) follows.

\end{proof}

\begin{theorem}\label{decrement_if_M_bigger_than_L}
If $M \ge L$ and $\bar x$ is such a point in $S$ that $\beta_M(\bar x) < 0$, then $d_M(\bar x) \neq 0$ and $$f(\bar x + d_M(\bar x)) \prec f(\bar x).$$
\end{theorem}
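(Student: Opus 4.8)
The plan is to dispose first of the easy claim $d_M(\bar x) \neq 0$ and then to certify the strict decrease coordinate-by-coordinate against the subbase $C$. For the first part, I would simply recall that $\beta_M(\bar x) = q_M(\bar x, d_M(\bar x))$ by \eqref{beta_function} and that $q_M(\bar x, 0) = 0$; since the hypothesis gives $\beta_M(\bar x) < 0 = q_M(\bar x, 0)$, the global minimizer $d_M(\bar x)$ cannot be the zero vector. This is immediate and mirrors the concluding line of the proof of Theorem \ref{stationarity_and_beta_value}(ii).

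For the strict decrease, write $d := d_M(\bar x)$ and recall that, by the description of $\mathrm{int}(K)$ through $C$, the assertion $f(\bar x + d) \prec f(\bar x)$ is equivalent to $\la f(\bar x + d) - f(\bar x), \xi \ra < 0$ for every $\xi \in C$. So I would fix an arbitrary $\xi \in C$ and control this scalar quantity from above. The natural tool is the cubic Taylor estimate \eqref{cube_equation} applied with $x = \bar x$ and $y = \bar x + d$; keeping only the upper (one-sided) direction of the absolute-value bound, it yields
\[
\la f(\bar x + d) - f(\bar x), \xi \ra \le \la Jf(\bar x) d, \xi \ra + \tfrac{1}{2} \la d^\top \nabla^2 f(\bar x) d, \xi \ra + \tfrac{L}{6}\|d\|^3.
\]

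The key step that uses the hypothesis $M \ge L$ is to replace $\tfrac{L}{6}\|d\|^3$ by $\tfrac{M}{6}\|d\|^3$ on the right, which only enlarges the bound; then, since the first two terms are dominated by the maximum over $C$ appearing in \eqref{cubic_function}, the entire right-hand side is at most $q_M(\bar x, d) = \beta_M(\bar x)$. Because $\beta_M(\bar x) < 0$ by hypothesis, this forces $\la f(\bar x + d) - f(\bar x), \xi \ra < 0$, and the arbitrariness of $\xi \in C$ finishes the argument. I do not anticipate a genuine obstacle: the proof is a clean one-sided application of \eqref{cube_equation} followed by the substitution $L \to M$. The only points demanding care are to invoke the correct (upper) half of the two-sided estimate \eqref{cube_equation}, and to pass from the fixed $\xi$ to the maximum over $C$ \emph{before} invoking the value of $\beta_M(\bar x)$, so that the strict sign is preserved uniformly for every $\xi$ simultaneously.
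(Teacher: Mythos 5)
Your proposal is correct and follows essentially the same route as the paper: both arguments apply the one-sided Taylor estimate \eqref{cube_equation} for a fixed $\xi \in C$, use $M \ge L$ to pass from $\tfrac{L}{6}\|d\|^3$ to $\tfrac{M}{6}\|d\|^3$, bound the resulting expression by $q_M(\bar x, d_M(\bar x)) = \beta_M(\bar x) < 0$ via the max over $C$, and conclude by the arbitrariness of $\xi$. The paper phrases this as adding two inequalities rather than chaining them, but the logic is identical.
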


\begin{proof}
From $\beta_M(\bar x) < 0$, it is trivially followed that $d_M(\bar x) \neq 0$ because $d_M(\bar x) = 0$ implies $\beta_M(\bar x) = 0$.  \\ 
Let $\xi'$ be an element in $C$. From the inequality \eqref{cube_equation}, we have
\begin{equation}\label{aux_15_12_3}
\la f(\bar x + d_M(\bar x)) - f(\bar x),~ \xi' \ra - \la Jf(\bar x) d_M(\bar x),~ \xi' \ra - \frac{1}{2} \la d_M(\bar x)^\top \nabla^2 f(\bar x) d_M(\bar x),~ \xi' \ra \le \frac{L}{6} \|d_M(\bar x)\|^3.     
\end{equation}
As $\beta_M(\bar x) < 0$, we get from the definition of $\beta_M(\bar x)$ that 
\begin{equation}\label{aux_15_12_40}
\la Jf(\bar x) d_M(\bar x),~ \xi' \ra + \frac{1}{2} \la d_M(\bar x)^\top \nabla^2 f(\bar x) d_M(\bar x),~ \xi'\ra + \frac{M}{6} \|d_M(\bar x)\|^3 < 0.     
\end{equation}
Adding \eqref{aux_15_12_3} and \eqref{aux_15_12_40}, we obtain from $M \ge L$ that 
\[\la f(\bar x + d_M(\bar x)) - f(\bar x),~ \xi' \ra < \frac{L - M}{6} \|d_M(\bar x)\|^3 \le 0  \]
Arbitrariness of $\xi'$ from $C$ shows that  
\begin{align*}
~&~ \la f(\bar x + d_M(\bar x)) - f(\bar x),~ \xi \ra < 0 \text{ for all } \xi \in C\\ 
\text{i.e.}, ~&~ f(\bar x + d_M(\bar x)) - f(\bar x) \in -\text{int}(K) \\ 
\text{i.e.}, ~&~ f(\bar x + d_M(\bar x)) \prec f(\bar x). 
\end{align*}
\end{proof}

Note that if $\bar x \in \mathcal{L}(f(x^0)) \subseteq S$, then from Theorem \ref{decrement_if_M_bigger_than_L}, we obtain $\bar x + d_M(\bar x) \in S$. So, there is neither a well-definedness issue for $f(\bar x + d_M(\bar x))$ nor an infeasibility issue of $\bar{x} + d_M(\bar x)$ for the problem \eqref{vop}.    \\

\begin{remark}\label{aux1_17_02}
Theorem \ref{decrement_if_M_bigger_than_L} does not necessarily imply that if  $ M \ge L$ and $\beta_M(\bar x) < 0$, then $d_M(\bar x)$ is a descent direction of $f$ at $\bar x$. For example, take $S = [-1.5, 1]$, $K = \mathbb{R}^2_+$ and $f: S \to \mathbb{R}^2$ as 
\[f(x) := \begin{pmatrix} f_1(x) \\ f_2(x) \end{pmatrix} := \begin{pmatrix} x^2 + 4 \sin x \\ x^3 - 2 x^2 \end{pmatrix}. \]

\begin{figure}[]
\centering
\begin{subfigure}[b]{0.45\textwidth}
\centering 
\begin{tikzpicture}
\begin{axis}[
color= black, 
thick,
xmin=-2.5, 
xmax=3.5, 
ymin=-3.5, 
ymax=3.7, 
axis equal image, 
font=\footnotesize,
xtick distance=1,
ytick distance=1,
inner axis line style={stealth-stealth},
xlabel = {$x$}, 
ylabel = {$y$}, 
axis x line=middle,
axis y line=middle 
] 
\addplot [
    domain=-1.7:1.8, 
    samples=200, 
    color=purple,
]
{x^2 + 4*sin(deg(x))};
\addplot [
    domain=-1.9:2.8, 
    samples=200, 
    color=blue,
    ]
    {x^3 - 2*x^2}; 
\end{axis}
\node at (2.5,4) {\footnotesize $f_1$};
\node at (4,4) {\footnotesize $f_2$};
\end{tikzpicture} 
\caption{Graphs of $f_1$ and $f_2$}\label{subfig_1_a}
\end{subfigure}%
\qquad 
\begin{subfigure}[b]{0.45\textwidth}
\centering
\begin{tikzpicture} 
\begin{axis}[
color= black, 
thick,
xmin=-.39, 
xmax=0.08, 
ymin=-0.028,
ymax = .03, 
axis lines=middle, 
font=\footnotesize,
xtick distance=0.1,
ytick distance=.05,
inner axis line style={stealth-stealth},
axis x line=middle,
axis y line=middle, 
xlabel = {$d$},
ylabel = {$q_M$}
] 
\addplot [
    thick, 
    domain=-0.375:.05, 
    samples=200, 
    color=red,
]{max(4.0768*x+0.92*x^2,-0.1552*x-1.88*x^2)+4*abs(x)^3}; 
\coordinate (1) at (140,6);  
\filldraw[black] (125,11.5) circle(0.05cm);
\node at (140,7.5) {\footnotesize $(-0.264, -0.016)$};  
\end{axis}
\end{tikzpicture} 
\caption{The graph of $q_M(\bar x, d)$ at $\bar x = 0.04$ for $M = 24$}\label{subfig_1_b}
\end{subfigure}
\caption{Objective components and the function $q_M(0.04, d)$ (Remark \ref{aux1_17_02})}\label{figure_in_remark}
\end{figure}
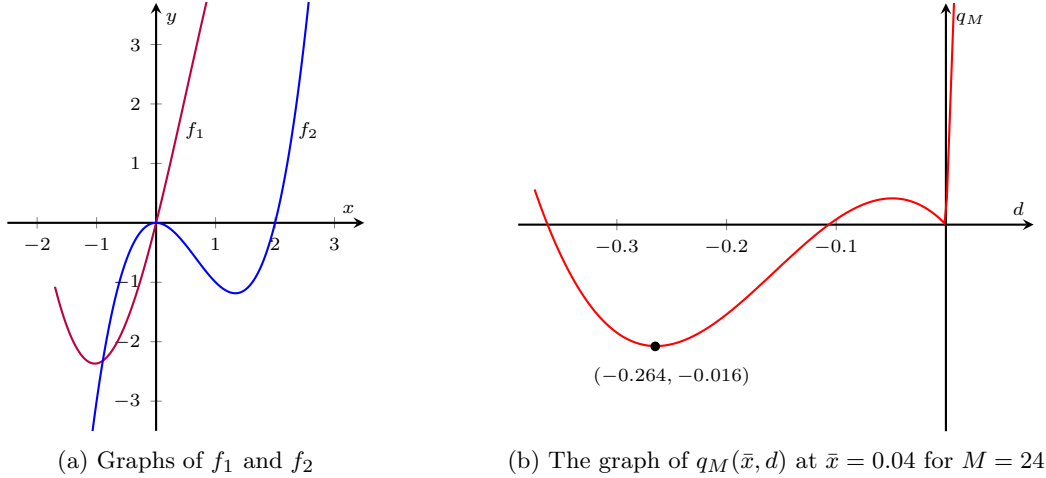

\noindent
Here, Hessians of $f_1$ and $f_2$ are Lipschitz continuous on $S$ with a common Lipschitz constant $L = 6$. We choose $M = 24 \ge L$ and $\bar x = 0.04$. For this $M$, we have 
\begin{align*}
~&~ q_M(\bar x, d) = \max \left\{4.0768 d+0.92 d^2,-0.155 d-1.88 d^2\right\} + 4 |d|^3,~ \\ 
~&~ d_M(\bar x) = \underset{d \in \mathbb{R}}{\emph{\argmin}}~ q_M(\bar x, d) = -0.264~\text{ and }~ \beta_M(\bar x) = - 0.016 < 0. 
\end{align*}
The graphs of $f_1, f_2$, and $q_M(\bar x, \cdot)$ are depicted in Figure \ref{figure_in_remark}. Note here that although 
\[f(\bar x + d_M(\bar x)) = 
f(-0.224) = \begin{pmatrix} -0.838 \\ -0.116\end{pmatrix} 
< \begin{pmatrix} 0.162 \\ -0.003\end{pmatrix}= f(\bar x), \] but from Figure \ref{subfig_1_a}, we clearly see that 
$f(\bar x + \alpha d_M(\bar x)) \not\prec f(\bar x) \text{ for all } \alpha \in [0, 0.1).$ Hence, $d_M(\bar x)$ is not a descent direction of $f$ at $\bar x = 0.04$. \\ 
\end{remark}

Even if the observation in Remark \ref{aux1_17_02} holds, note that if $M$ is chosen no smaller than $L$, then Theorems \ref{stationarity_and_beta_value} and \ref{decrement_if_M_bigger_than_L} together give an attractive observation towards developing a ``descent" iterative method ``without requiring convexity of $f$'' to find stationary points of \eqref{vop}: 
\medskip
\begin{enumerate}[Step 1.]
\item \label{three-step-process}
Take $M_k \ge L$ for all $k = 0, 1, 2, \ldots$. 
\item Start from any initial point $x^0 \in S$, and keep generating $x^{k + 1}$ from $x^k$ by \[x^{k + 1} : = x^k + d_{M_k}(x^k). \]
\item Stop if $\beta_{M_k}(x^k) \ge 0$.
\end{enumerate}

\medskip
According to Theorem \ref{decrement_if_M_bigger_than_L}, this iterative process has \emph{descent property} because $f(x^k + d_{M_k}(x^k)) \prec f(x^k)$ for all $k$; moreover, from Theorem \ref{stationarity_and_beta_value} (iii), this process ends producing a stationary point of $f$. 
(The convergence $\lim_{k \to \infty} {\beta}_{M_k}(x_k) = 0$ is ensured by Theorem \ref{aux2_28_01_25}).

\medskip
Although this observation is attractive, our aim is to find not just stationary points but weakly efficient points of \eqref{vop}. In addition, difficult parts of this iterative process are 
\begin{enumerate}[(i)]
\item an estimation of the Lipschitz constant $L$ and 
\item an effective way to compute $d_{M_k}(x^k)$. 
\end{enumerate}

\medskip 
Note that this iterative process ends at a stationary point of $f$ because of the stopping condition $\beta_{M_k}(x^k) \ge 0$. So, this stopping condition must be appropriately revised to ensure that the method terminates at a weakly efficient point. Such a stopping condition is proposed later with the help of $\mu_M$ (as defined in \eqref{mu_M_definition}) instead of $\beta_M$.

\medskip
As an estimation of the Lipschitz constant is difficult, below we also propose a way to bypass using $M \ge L$ yet having the descent property of the generated sequence $\{x^k\}$, i.e., $f(x^{k + 1}) \prec f(x^k)$. This discussion is deferred until Lemma \ref{upper_bound_of_J}, where we define an auxiliary function $h_M$ and use it to ensure the descent property of the generated sequence by the proposed method (in Algorithm \ref{algo}).

\medskip 
For an effective computation of $d_M(\bar x)$ at a given $\bar x$ in int$(S)$, we describe a process below. Note that the problem in \eqref{cubic_direction} is non-convex and may have multiple minima. So, how do we solve it? We show below (in Theorem \ref{d_M_is_one_dim_convex_min} and Remark \ref{lemma_d_M_is_1d_convex}) that it is a one-dimensional convex optimization problem. We need to have a few observations and lemmas to arrive at this result, as presented below.   \\

At a given $\bar x$ from $S$, to find a cubic regularized Newton direction $d_M(\bar x)$, according to \eqref{cubic_direction}, we need to find a global minimum point of $q_M(\bar x, \cdot) : \mathbb{R}^n \to \mathbb{R}$. Note that $q_M(\bar x, \cdot)$ is a non-smooth function. So, $d_M(\bar x)$ necessarily satisfies 
\begin{equation}\label{aux_15_12_4}
0 \in \partial q_M(\bar x, d_M(\bar x)),     
\end{equation}
where $\partial q_M(\bar x, d_M(\bar x))$ is the subdifferential of $q_M(\bar x, \cdot)$ at $d_M(\bar x)$. Define a set 
\[ \mathscr{A}(d_M(\bar x)) := 
\left\{ \eta \in C: q_M(\bar x, d_M(\bar x)) = \la Jf(\bar x) d_M(\bar x), \eta \ra + \frac{1}{2} \la d_M(\bar x)^\top \nabla^2 f(\bar x) d_M(\bar x), \eta \ra + \frac{M}{6} \|d_M(\bar x)\|^3  \right\}. \]
As $C$ is finite, the set $\mathscr{A}(d_M(\bar x))$ is finite. Let 
\begin{equation} \label{aux4_01_03_25}
\mathscr{A}(d_M(\bar x)) = \left\{\eta^1(\bar x), \eta^2(\bar x), \ldots, \eta^{s(\bar x)}(\bar x) \right\}. 
\end{equation}
Then, the inclusion relation \eqref{aux_15_12_4} implies that there exist  real constants $\lambda_i(\bar x) \ge 0$, $i = 1, 2, \ldots, s(\bar x)$, with $\sum_{i = 1}^{s(\bar x)} \lambda_i(\bar x) = 1$ such that 
\begin{align} 
~&~\sum_{i = 1}^{s(\bar x)} \lambda_i(\bar x) \left\{\la Jf(\bar x), \eta^i(\bar x) \ra + \la \nabla^2f(\bar x) d_M(\bar x), \eta^i(\bar x) \ra + \frac{M}{2} \|d_M(\bar x)\| d_M(\bar x) \right\} = 0,  \label{replica_of_2_5} \\ 
\text{i.e., } ~&~ \left( \sum_{i = 1}^{s(\bar x)} \lambda_i(\bar x) \left\{ \left(\eta^i_1(\bar x) \nabla^2 f_1(\bar x) +  \eta^i_2(\bar x) \nabla^2 f_2(\bar x) + \cdots +  \eta^i_p(\bar x) \nabla^2 f_p(\bar x)\right) + \frac{M}{2} r_M(\bar x) I_n \right\} \right) d_M(\bar x) \notag \\ 
~&~ = - \sum_{i = 1}^{s(\bar x)} \lambda_i(\bar x) \la Jf(\bar x), \eta^i(\bar x) \ra, \notag \\ 
\text{i.e., } ~&~ \left( \sum_{i = 1}^{s(\bar x)} \lambda_i(\bar x) \left\{ \la \nabla^2 f(\bar x), \eta^i(\bar x) \ra + \frac{M}{2} r_M(\bar x) I_n \right\}\right) d_M(\bar x) = - \sum_{i = 1}^{s(\bar x)} \lambda_i(\bar x) \la Jf(\bar x), \eta^i(\bar x) \ra. \label{10_01_25_aux1} 
\end{align}
As $\la \nabla^2 f(\bar x), \eta^i(\bar x) \ra + \frac{M}{2} r_M(\bar x) I_n$ is invertible (see Corollary \ref{invertable_for_all_xi}), we get from \eqref{10_01_25_aux1} that 
\begin{align} 
d_M(\bar x) = & - \left( \sum_{i = 1}^{s(\bar x)} \lambda_i(\bar x) \left\{ \la \nabla^2 f(\bar x), \eta^i(\bar x) \ra + \frac{M}{2} r_M(\bar x) I_n \right\}\right)^{-1} \sum_{i = 1}^{s(\bar x)} \lambda_i (\bar x) \la Jf(\bar x), \eta^i(\bar x) \ra \notag \\ 
= & - \left( \la \nabla^2 f(\bar x), \eta(\bar x) \ra + \frac{M}{2} r_M(\bar x) I_n \right)^{-1} \la Jf(\bar x), \eta(\bar x) \ra,   
\label{10_01_25_aux2}
\end{align} 
where $\eta(\bar x) = \sum_{i = 1}^{s(\bar x)} \lambda_i(\bar x) \eta^i(\bar x)$. \\

At a given $x$ in $S$, define a set 
\[\mathscr{D}_x := \left\{r \in \mathbb{R}_+ : \la \nabla^2 f(x),~ \xi \ra + \frac{M}{2} r I_n \text{ is positive definite for all } \xi \in C \right\}. \]
It is to observe that the set $\mathscr{D}_x$ is non-empty and unbounded  for any $x \in S$ since for any $r$ that satisfies 
\[ r > - \frac{2}{M} \min_{\xi \in C} ~ \lambda_1 \left(\la \nabla^2 f(x),~ \xi \ra\right), \]
the matrix $\la \nabla^2 f(x),~ \xi \ra + \frac{M}{2} r I_n$ is positive definite for all $\xi \in C$. \\

\begin{theorem}\label{d_M_is_one_dim_convex_min}
For any $\bar x \in \mathrm{int} (S)$ and $M>0$, the following equality holds: 
\begin{equation}\label{10_1_25_aux3} 
\min_{z \in \mathbb{R}^n} v_u(z) = \sup_{r \in \mathscr{D}_{\bar{x}}} v_l(r), 
\end{equation} 
where the functions $v_u: \mathbb{R}^n \to \mathbb{R}$ and $v_l: \mathscr{D}_{\bar x} \to \mathbb{R}$ are defined by 
\begin{align}
& v_u(z) := \max_{\xi \in C} 
 \la Jf(\bar x) z + \frac{1}{2} z^\top  \nabla^2 f(\bar x) z, \xi \ra + \frac{M}{6} \| z \|^3, \notag \\ 
& v_l(r) := \frac{1}{2} \la d(\bar x, r), \sum_{i = 1}^{s(\bar x)} \lambda_i(\bar x) \la Jf(\bar x), \eta^i(\bar x) \ra \ra - \frac{M}{12} r^3,  \label{v_l_expression}\\
\text{ and }~ & d(\bar x, r) = - \left( \sum_{i = 1}^{s(\bar x)} \lambda_i(\bar x) \left\{ \la \nabla^2 f(\bar x), \eta^i(\bar x) \ra + \frac{M}{2} r ~ I_n \right\}\right)^{-1} \sum_{i = 1}^{s(\bar x)} \lambda_i(\bar x) \la Jf(\bar x), \eta^i(\bar x) \ra.  \label{d_bar_x_r}
\end{align}

\end{theorem}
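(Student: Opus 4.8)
My plan is to read the identity \eqref{10_1_25_aux3} as a strong-duality statement between the nonconvex, nonsmooth primal problem $\min_z v_u(z) = \beta_M(\bar x)$ (recall \eqref{beta_function}) and a one-dimensional \emph{concave} dual, and to prove the two inequalities separately. The device that tames the cubic term is the elementary identity
\begin{equation*}
\tfrac{M}{6}\|z\|^3 = \max_{r \ge 0}\left( \tfrac{M}{4} r \|z\|^2 - \tfrac{M}{12} r^3 \right),
\end{equation*}
whose maximum is attained exactly at $r = \|z\|$ (differentiate the right-hand side in $r$). I also record that, since $\eta(\bar x) = \sum_i \lambda_i(\bar x)\eta^i(\bar x)$ is a convex combination of points of $C$ and $\max_{\xi \in C}\la \cdot, \xi\ra = \max_{\xi \in \mathrm{conv}(C)}\la \cdot, \xi\ra$, one has the pointwise bound $\max_{\xi\in C}\la Jf(\bar x)z + \tfrac12 z^\top\nabla^2 f(\bar x)z, \xi\ra \ge \la Jf(\bar x)z + \tfrac12 z^\top \nabla^2 f(\bar x)z, \eta(\bar x)\ra$ for every $z$.

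For weak duality, i.e. $\min_z v_u(z) \ge \sup_{r\in\mathscr{D}_{\bar x}} v_l(r)$, I fix $r \in \mathscr{D}_{\bar x}$ and combine the two facts above to obtain, for every $z$,
\begin{equation*}
v_u(z) \ge \la Jf(\bar x), \eta(\bar x)\ra z + \tfrac12 z^\top \la \nabla^2 f(\bar x), \eta(\bar x)\ra z + \tfrac{M}{4} r\|z\|^2 - \tfrac{M}{12} r^3 =: \Phi_r(z).
\end{equation*}
For $r \in \mathscr{D}_{\bar x}$ the matrix $\la \nabla^2 f(\bar x), \eta(\bar x)\ra + \tfrac{M}{2} r I_n$ is positive definite (it is a convex combination over $C$ of the matrices appearing in the definition of $\mathscr{D}_{\bar x}$; cf. Corollary \ref{invertable_for_all_xi}), so $\Phi_r$ is a strictly convex quadratic whose unique minimizer is precisely $d(\bar x, r)$ of \eqref{d_bar_x_r}; a direct substitution then identifies $\min_z \Phi_r(z) = v_l(r)$. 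Minimizing the displayed inequality over $z$ gives $\min_z v_u(z) \ge v_l(r)$, and taking the supremum over $r \in \mathscr{D}_{\bar x}$ yields this direction.

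For the reverse inequality I would exhibit the value $r^\ast := r_M(\bar x) = \|d_M(\bar x)\|$ at which the dual meets $\beta_M(\bar x)$. By the optimality system \eqref{10_01_25_aux1}--\eqref{10_01_25_aux2}, $d(\bar x, r^\ast)$ solves the same linear system as $d_M(\bar x)$, so $d(\bar x, r^\ast) = d_M(\bar x)$. Writing $\beta_M(\bar x)$ as the common active value $\la Jf(\bar x)d_M(\bar x) + \tfrac12 d_M(\bar x)^\top\nabla^2 f(\bar x) d_M(\bar x), \eta(\bar x)\ra + \tfrac{M}{6}(r^\ast)^3$ (legitimate because $\sum_i\lambda_i(\bar x)=1$ and every $\eta^i(\bar x)$ is active) and substituting $\la \nabla^2 f(\bar x), \eta(\bar x)\ra d_M(\bar x) = -\la Jf(\bar x),\eta(\bar x)\ra - \tfrac{M}{2} r^\ast d_M(\bar x)$ cancels the quadratic form and collapses $\beta_M(\bar x)$ to $\tfrac12\la d_M(\bar x), \la Jf(\bar x),\eta(\bar x)\ra\ra - \tfrac{M}{12}(r^\ast)^3 = v_l(r^\ast)$. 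Combined with weak duality, this forces equality.

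The step I expect to be the main obstacle is making this last argument rigorous \emph{at the boundary} of $\mathscr{D}_{\bar x}$. The matrix governing $d(\bar x, r^\ast)$ need only be positive \emph{semi}definite (the ``hard case''), in which case $r^\ast$ may fail to lie in $\mathscr{D}_{\bar x}$ and the closed form \eqref{d_bar_x_r} degenerates there; simply evaluating $v_l$ at $r^\ast$ is then not permitted. I would therefore not plug in $r^\ast$ directly but instead establish that $v_l$ is concave on $\mathscr{D}_{\bar x}$, locate its stationarity through the condition $\|d(\bar x, r)\| = r$, and pass to the appropriate one-sided limit so that $\sup_{r\in\mathscr{D}_{\bar x}} v_l(r)$ recovers $\beta_M(\bar x)$; the positive definiteness on $\mathscr{D}_{\bar x}$ from Corollary \ref{invertable_for_all_xi} is exactly what keeps $v_l(r)$ finite along the limit. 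Controlling this limiting behaviour, and confirming that the relevant value of $r$ is reached within the closure of $\mathscr{D}_{\bar x}$, is the delicate point of the proof.
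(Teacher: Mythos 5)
Your proposal follows essentially the same route as the paper: the lower bound $\tfrac{M}{6}\|z\|^3 \ge \tfrac{M}{4}r\|z\|^2 - \tfrac{M}{12}r^3$ is exactly the Lagrangian relaxation the paper performs via the constraint $\|z\|^2=\alpha$, your computation $\min_z \Phi_r(z) = v_l(r)$ reproduces the paper's weak-duality inequality, and closing the gap at the radius where $\|d(\bar x, r)\| = r$ is precisely the paper's Assertions 1 and 2. Your concern about the boundary/hard case is well placed --- the paper itself invokes Corollary \ref{invertable_for_all_xi} at that point even though that corollary is only deduced \emph{after} this theorem --- and your proposed fix (concavity of $v_l$ on $\mathscr{D}_{\bar x}$ plus a one-sided limit) matches what the paper records in Remark \ref{lemma_d_M_is_1d_convex}.
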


\begin{proof}
\begin{align}\label{21_01_25_aux5}
\min_{z \in \mathbb{R}^n} v_u(z) 
& = \min_{z \in \mathbb{R}^n \atop  \|z\|^2 = \alpha} 
\left[ \max_{\xi \in C} \la Jf(\bar x)z + \frac{1}{2} z^\top \nabla^2 f(\bar x) z, \xi \ra + \frac{M}{6} \alpha^{3/2}   \right] \notag  \\ 
& = \min_{ z \in \mathbb{R}^n \atop \alpha \in \mathbb{R}_+} \sup_{\gamma \in \mathbb{R}} \left[ \max_{\xi \in C} \la Jf(\bar x)z + \frac{1}{2} z^\top \nabla^2 f(\bar x) z, \xi \ra + \frac{M}{6} \alpha^{3/2} + \frac{M}{4} \gamma \left(\|z\|^2 - \alpha \right)  \right] \notag \\ 
& \ge \sup_{\gamma \in \mathbb{R}} \min_{ z \in \mathbb{R}^n \atop \alpha \in \mathbb{R}_+}  \left[ \max_{\xi \in C} \la Jf(\bar x)z + \frac{1}{2} z^\top \nabla^2 f(\bar x) z, \xi \ra + \frac{M}{6} \alpha^{3/2} + \frac{M}{4} \gamma \left(\|z\|^2 - \alpha \right)  \right] \notag \\ 
& \ge \sup_{r \in \mathscr{D}_{\bar x}} \min_{z \in \mathbb{R}^n \atop \alpha \in \mathbb{R}_+}  \left[ \max_{\xi \in C} \la Jf(\bar x)z + \frac{1}{2} z^\top \nabla^2 f(\bar x)z, \xi \ra + \frac{M}{6} \alpha^{3/2} + \frac{M}{4} r \left(\|z\|^2 - \alpha \right)  \right] \notag \\ 
& = \sup_{r \in \mathscr{D}_{\bar x}} \min_{ z \in \mathbb{R}^n \atop \alpha \in \mathbb{R}_+} \vartheta (z, \alpha, r),    
\end{align}    
where the function $\vartheta : \mathbb{R}^n \times \mathbb{R}_+ \times \mathscr{D}_{\bar x} \to \mathbb{R}$ is given by 
\begin{align*} 
\vartheta (z, \alpha, r) & : = \max_{\xi \in C} \la Jf(\bar x)z + \frac{1}{2} z^\top \nabla^2 f(\bar x) z, \xi \ra + \frac{M}{6} \alpha^{3/2} + \frac{M}{4} r \left(\|z\|^2 - \alpha \right) \\
& = \max_{\xi \in C} \left[ \la Jf(\bar x)z + \frac{1}{2} z^\top \nabla^2 f(\bar x) z, \xi \ra + \frac{M}{4} r  \|z\|^2 \right] + \frac{M}{6} \alpha^{3/2} - \frac{M r \alpha }{4} \\ & = \max_{\xi \in C} \left[ \la Jf(\bar x)z, \xi \ra + \frac{1}{2} z^\top \left( \la \nabla^2 f(\bar x), \xi \ra + \frac{1}{2} M r I_n \right) z \right] + \frac{M}{6} \alpha^{3/2} - \frac{M r \alpha }{4}. 
\end{align*}
Note that for any given $r \in \mathscr{D}_{\bar x}$, the function $\vartheta (\cdot, \cdot, r) : \mathbb{R}^n \times \mathbb{R} \to \mathbb{R}$ is strictly convex due to the following two observations:  
\begin{enumerate}[(i)]
\item For each $r \in \mathscr{D}_{\bar x}$ and $\xi \in C$, the matrix $\la \nabla^2 f(\bar x), \xi \ra + \frac{Mr}{2} I_n$ is positive definite, and hence 
\[\max_{\xi \in C} \left[ \la Jf(\bar x)z, \xi \ra + \frac{1}{2} z^\top \left( \la \nabla^2 f(\bar x), \xi \ra + \frac{1}{2} M r I_n \right) z \right] \] 
is a strictly convex function of $z \in \mathbb{R}^n$.

\item For each $r \in \mathscr{D}_{\bar x} \subseteq [0, \infty)$, the function 
\[\alpha \mapsto \frac{M}{6} \alpha^{3/2} - \frac{M r \alpha }{4} \]
is convex on $\mathbb{R}_+$. \\ 
\end{enumerate}

\noindent
Therefore, for any given $\bar r \in \mathscr{D}_{\bar x}$, the function $\vartheta (\cdot, \cdot, \bar r) : \mathbb{R}^n \times \mathbb{R} \to \mathbb{R}$ gets its (global) minimum value at a point $(\tilde z, \tilde \alpha) \in \mathbb{R}^n \times \mathbb{R}_+$ at which 
\[0 \in \partial \vartheta (\widetilde z, \widetilde \alpha, \bar r). \]
Noting  that 
\[\vartheta (z, \alpha, r) = q_M(\bar x, z) - \frac{M}{6} \|z\|^3 + \frac{M}{6} \alpha^{3/2} + \frac{M}{4} r \left(\|z\|^2 - \alpha \right),  \] 
we obtain
\begin{align}\label{21_01_25_aux1}
&~ 0 \in \partial \vartheta (\widetilde z, \widetilde \alpha, \bar r) \notag \\ 
\Longleftrightarrow &~ 0 \in \partial q_M(\bar x, \widetilde z) - \frac{M}{2} \left( \|\widetilde z\| - \bar r \right)  \widetilde z, ~ {\widetilde \alpha} = \bar r^2.  
\end{align}
Observe that any $\widetilde{z} = d(\bar x, \bar r)$ with $\|d(\bar x, \bar r)\| = \bar r$ satisfies the inclusion relation in the right of \eqref{21_01_25_aux1}. It is noteworthy that the set of all $d(\bar x, \bar r)$ with $\|d(\bar x, \bar r)\| = \bar r$ is non-empty since the direction $d_M(\bar x )$, as in \eqref{10_01_25_aux2}, satisfies $\|d(\bar x, \bar r_M)\| = \bar r_M$. \\

\noindent
As the set of minimum points of $\vartheta (\cdot, \cdot, r) : \mathbb{R}^n \times \mathbb{R} \to \mathbb{R}$ is a singleton set, we get 
\begin{align}\label{21_01_25_aux2}  
~&~ \min_{ z \in \mathbb{R}^n \atop \alpha \in \mathbb{R}_+} \vartheta (z, \alpha, r) \notag \\ 
= ~&~ \min_{ \|d(\bar x, r)\| = r} \vartheta (d(\bar x, r), r^2, r) \notag \\ 
= ~&~ \min_{ \|d(\bar x, r)\| = r} \max_{\xi \in C} \left[  \la Jf(\bar x)d(\bar x, r) + \frac{1}{2} d(\bar x, r)^\top \nabla^2 f(\bar x) d(\bar x, r), \xi \ra + \frac{M}{6} r^3 \right] \notag \\ 
\ge  ~&~ \min_{ \|d(\bar x, r)\| = r} \sum_{i = 1}^{s(\bar x)} \lambda_i(\bar x) \left[  \la Jf(\bar x)d(\bar x, r) + \frac{1}{2} d(\bar x, r)^\top \nabla^2 f(\bar x) d(\bar x, r), \eta^i(\bar x) \ra + \frac{M}{6} r^3 \right] 
\end{align}
since for each $i = 1, 2, \ldots, s(\bar x)$, we have 
\begin{align*} 
~&~ \max_{\xi \in C} \left[  \la Jf(\bar x)d(\bar x, r) + \frac{1}{2} d(\bar x, r)^\top \nabla^2 f(\bar x) d(\bar x, r), \xi \ra + \frac{M}{6} r^3 \right] \\
\ge ~&~ \la Jf(\bar x)d(\bar x, r) + \frac{1}{2} d(\bar x, r)^\top \nabla^2 f(\bar x) d(\bar x, r), \eta^i(\bar x) \ra + \frac{M}{6} r^3. 
\end{align*}
We notice from the expression \eqref{d_bar_x_r} of $d(\bar x, r)$ that  
\begin{align} \label{21_01_25_aux6}
~&~ \left( \sum_{i = 1}^{s(\bar x)} \lambda_i(\bar x) \left\{ \la \nabla^2 f(\bar x), \eta^i(\bar x) \ra + \frac{M}{2} r ~ I_n \right\}\right) d(\bar x, r) + \sum_{i = 1}^{s(\bar x)} \lambda_i(\bar x) \la Jf(\bar x), \eta^i(\bar x) \ra = 0 \\
\text{i.e., } ~&~ \sum_{i = 1}^{s(\bar x)} \lambda_i(\bar x)  \la Jf(\bar x) + \nabla^2 f(\bar x) d(\bar x, r), \eta^i(\bar x) \ra  = - \frac{Mr}{2} d(\bar x, r) \notag \\
\text{i.e., } ~&~ \sum_{i = 1}^{s(\bar x)} \lambda_i(\bar x)  \la Jf(\bar x) d(\bar x, r) + d(\bar x, r)^\top \nabla^2 f(\bar x) d(\bar x, r), \eta^i(\bar x) \ra  = - \frac{M}{2} r \|d(\bar x, r)\|. \notag 
\end{align}
Hence, for $\|d(\bar x, r)\| = r$, we have 
\begin{align} \label{aux1_01_03_25}
~&~ \sum_{i = 1}^{s(\bar x)} \lambda_i(\bar x) \left[  \la Jf(\bar x)d(\bar x, r) + \frac{1}{2} d(\bar x, r)^\top \nabla^2 f(\bar x) d(\bar x, r), \eta^i(\bar x) \ra + \frac{M}{6} r^3 \right] \\
= ~&~ \frac{1}{2} \sum_{i = 1}^{s(\bar x)} \lambda_i(\bar x) \la Jf(\bar x) d(\bar x, r), \eta^i(\bar x) \ra - \frac{M}{12} r^3 \notag \\
= ~&~ v_l(r). \notag 
\end{align}
Thus, from \eqref{21_01_25_aux2}, we get 
\[ \min_{ z \in \mathbb{R}^n \atop \alpha \in \mathbb{R}_+} \vartheta (z, \alpha, r) \ge v_l(r), \]
and hence \eqref{21_01_25_aux5} gives 
\begin{equation}\label{22_01_25_aux1}
\min_{z \in \mathbb{R}^n} v_u(z) \ge \sup_{r \in \mathscr{D}_{\bar{x}}} v_l(r). 
\end{equation}
Next, to show that the inequality in \eqref{22_01_25_aux1} must be a perfect equality, we prove the following two assertions. \\

\begin{enumerate}[\text{Assertion} 1.]
\item 
Any critical point of $v_l:(0, \infty) \to \mathbb{R}$ lies in $W : = \{r \in (0, \infty): \|d(\bar x, r)\| = r\}$. \\ 
\item 
For any $r \in \mathscr{D}_{\bar x}$ and $d(\bar x, r)$ with $\|d(\bar x, r)\| = r$, we have $v_u(d(\bar x, r)) - v_l(r) =  0$. \\ 
\end{enumerate}

To prove Assertion 1, we denote 
$\zeta(\bar x) := \sum_{i = 1}^{s(\bar x)} \lambda_i(\bar x) \la Jf(\bar x), \eta^i(\bar x) \ra$. \\ \\ 
Note that 
$v_l(r) = \frac{1}{2} \la d(\bar x, r), \zeta(\bar x) \ra - \frac{M}{12} r^3$, and 
the equation \eqref{d_bar_x_r} gives 
\begin{align}
& \left( \sum_{i = 1}^{s(\bar x)} \lambda_i(\bar x) \left\{ \la \nabla^2 f(\bar x), \eta^i(\bar x) \ra + \frac{M}{2} r ~ I_n \right\}\right) d(\bar x, r) + \zeta(\bar x) = 0 \notag \\
\implies &  \sum_{i = 1}^{s(\bar x)} \lambda_i(\bar x) \la \nabla^2 f(\bar x), \eta^i(\bar x) \ra d(\bar x, r) + \frac{Mr}{2} d(\bar x, r) + \zeta(\bar x) = 0 \notag \\ 
\implies & \left( \sum_{i = 1}^{s(\bar x)} \lambda_i(\bar x) \la \nabla^2 f(\bar x), \eta^i(\bar x) \ra + \frac{Mr}{2} I_n \right) d'(\bar x, r) = - \frac{M}{2} d(\bar x, r) \label{aux3_06_03_25} \\ 
& \mbox{ (by differentiation with respect to $r$)}, \notag 
\end{align}
where $d'(\bar x, r)$ is the derivative of $d(\bar x, r)$ with respect to $r$. 
Therefore, 
\begin{align}
v_l'(r) = ~&~ \frac{1}{2} \la d'(\bar x, r), \zeta(\bar x) \ra - \frac{M}{4} r^2 \notag \\ 
= ~&~  - \frac{M}{4} \la \left( \sum_{i = 1}^{s(\bar x)} \lambda_i(\bar x) \la \nabla^2 f(\bar x), \eta^i(\bar x) \ra + \frac{Mr}{2} I_n \right)^{-1} d(\bar x, r), \zeta(\bar x)  \ra - \frac{M}{4} r^2 \notag \\ 
= ~&~ \frac{M}{4} \la \left( \sum_{i = 1}^{s(\bar x)} \lambda_i(\bar x) \la \nabla^2 f(\bar x), \eta^i(\bar x) \ra + \frac{Mr}{2} I_n \right)^{-2} \zeta(\bar x), \zeta(\bar x) \ra - \frac{M}{4} r^2 \notag \\ 
\overset{\eqref{aux1_01_03_25}}{=} &~ \frac{M}{4} \left(\| d(\bar x, r) \|^2 - r^2 \right). \label{aux1_06_03_25}
\end{align}
Hence, any $r$ that satisfies $v_l'(r) = 0$ must lie in $W$, which proves Assertion 1.  \\

To prove Assertion 2, let $r \in \mathscr{D}_{\bar x}$ and $\|d(\bar x, r)\| = r$. Then, from the defining expression of $v_u$, we get  
\[v_u(d (\bar x, r)) = \la Jf(\bar x) d(\bar x, r) + \frac{1}{2} d(\bar x, r)^\top \nabla^2 f(\bar x) d(\bar x, r), \sum_{i = 1}^{s(\bar x)} \lambda_i \eta^i(\bar x) \ra + \frac{M}{6} r^3,  \] 
which is identical with the expression of $v_l(r)$ (see \eqref{aux1_01_03_25}).  \\

From Assertion 1, Assertion 2, and \eqref{22_01_25_aux1}, we obtain the equation \eqref{10_1_25_aux3}. Hence, the result follows.

\end{proof}

\begin{remark}\label{lemma_d_M_is_1d_convex}
Note from \eqref{aux1_06_03_25} that for any $r \in \mathscr{D}_{\bar{x}}$, 
\begin{align*}
v''_l(r) = &~ \frac{M}{2} \left( d(\bar x, r)^\top d'(\bar x, r) - r \right) \\ 
\overset{\eqref{aux3_06_03_25}}{=} &~ - \frac{M^2}{4}   d(\bar x, r)^\top \left(\sum_{i = 1}^{s(\bar x)} \lambda_i(\bar x) \la \nabla^2 f(\bar x), \eta^i(\bar x) \ra + \frac{Mr}{2} I_n \right)^{-2} d(\bar x, r) - \frac{Mr}{2} < 0. 
\end{align*}
Thus, the maximization problem on the right is a minimization of the convex function $-v_l(r)$ on the convex set $\mathscr{D}_{\bar{x}}$. Hence, for any $x \in \mathrm{int}(S)$, there exists a cubic Newton direction $d_M(x)$, as defined in \eqref{cubic_direction}.    \\ 
\end{remark}

Note that the direction $d_M(\bar x)$ given by \eqref{10_01_25_aux2} is the direction $d(\bar x, r)$ given by \eqref{d_bar_x_r} with $r = r_M(\bar x)$. As $d(\bar x, r_M(\bar x))$ satisfies $\|d(\bar x, r_M(\bar x))\| = r_M(\bar x)$ and $d(\bar x, r_M(\bar x))$ is a solution point of $\min_{z \in \mathbb{R}^n} v_u(z)$, we get from the right of \eqref{10_1_25_aux3} that $r_M(\bar x) \in \mathscr{D}_{\bar{x}}$ and the expression of $d(\bar x, r_M(\bar x))$ is well-defined. Hence, we have the following result. \\

\begin{corollary}\label{invertable_for_all_xi}
For any $x \in \mathrm{int}(S)$ and $\xi \in C$, the matrix $\la \nabla^2 f(x),~ \xi \ra + \frac{M}{2} r_M(x) I_n$ is positive definite. \\ 
\end{corollary}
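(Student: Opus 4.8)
The plan is to observe that the assertion is, by the very definition of $\mathscr{D}_x$, nothing more than the single membership statement $r_M(x) \in \mathscr{D}_x$. Indeed, recall
\[ \mathscr{D}_x = \left\{ r \in \mathbb{R}_+ : \la \nabla^2 f(x), \xi \ra + \tfrac{M}{2} r I_n \text{ is positive definite for all } \xi \in C \right\}, \]
so once $r_M(x) \in \mathscr{D}_x$ is secured, the positive definiteness of $\la \nabla^2 f(x), \xi \ra + \tfrac{M}{2} r_M(x) I_n$ for every $\xi \in C$ is immediate. Hence the entire task reduces to certifying that the scalar $r_M(x) = \|d_M(x)\|$ lies in $\mathscr{D}_x$.

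To establish this membership I would route the argument through Theorem \ref{d_M_is_one_dim_convex_min} rather than through \eqref{10_01_25_aux2} (which itself presupposes the invertibility we are proving, and so cannot be used without circularity). The theorem supplies the identity $\min_{z \in \mathbb{R}^n} v_u(z) = \sup_{r \in \mathscr{D}_x} v_l(r)$, and by \eqref{cubic_direction} the cubic regularized Newton direction $d_M(x)$ attains the left-hand minimum. On the right-hand side, Remark \ref{lemma_d_M_is_1d_convex} shows $v_l'' < 0$ throughout $\mathscr{D}_x$, so the supremum is realized at a unique critical point $r^\ast$; Assertion 1 in the proof of Theorem \ref{d_M_is_one_dim_convex_min} localizes $r^\ast$ inside $W = \{ r \in (0, \infty) : \|d(x, r)\| = r \} \subseteq \mathscr{D}_x$, and Assertion 2 yields $v_u(d(x, r^\ast)) = v_l(r^\ast) = \min_z v_u(z)$. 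Thus $d(x, r^\ast)$ is itself a global minimizer of $v_u$ with $\|d(x, r^\ast)\| = r^\ast$. Identifying this minimizer with the computed direction $d_M(x)$ then gives $r_M(x) = \|d_M(x)\| = r^\ast \in \mathscr{D}_x$.

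Having placed $r_M(x)$ in $\mathscr{D}_x$, I would simply read off the conclusion from the definition of $\mathscr{D}_x$: the matrix $\la \nabla^2 f(x), \xi \ra + \tfrac{M}{2} r_M(x) I_n$ is positive definite for all $\xi \in C$, which also retroactively validates the inverse appearing in \eqref{10_01_25_aux2} and \eqref{d_bar_x_r}.

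The step I expect to be the main obstacle is twofold: guaranteeing that the dual optimizer $r^\ast$ lands in the \emph{open} region $\mathscr{D}_x$ (so that positive definiteness, not merely semidefiniteness, holds), and cleanly identifying $d(x, r^\ast)$ with the computed direction $d_M(x)$ so that $r_M(x) = r^\ast$. The first is handled by the strict concavity of $v_l$ (Remark \ref{lemma_d_M_is_1d_convex}) together with the critical-point localization of Assertion 1; the second rests on the fact that every global minimizer of $v_u$ shares the same norm, which is precisely where one must avoid invoking \eqref{10_01_25_aux2} circularly. A minor caveat worth flagging is the degenerate case $r_M(x) = 0$, i.e. when $x$ is stationary and $d_M(x) = 0$; the argument tacitly assumes $r_M(x) > 0$, which is the relevant regime since the direction is computed at non-stationary iterates.
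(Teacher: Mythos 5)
Your proposal is correct and follows essentially the same route as the paper: the paper likewise reduces the corollary to the membership $r_M(\bar x)\in\mathscr{D}_{\bar x}$, which it obtains from the duality identity of Theorem \ref{d_M_is_one_dim_convex_min} together with the observation that $d(\bar x, r_M(\bar x))$ solves $\min_{z}v_u(z)$ and satisfies $\|d(\bar x, r_M(\bar x))\|=r_M(\bar x)$. Your additional care about the circularity with \eqref{10_01_25_aux2} and about identifying $d_M(x)$ with the dual-optimal $d(x,r^\ast)$ only makes explicit steps that the paper leaves implicit.
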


\begin{lemma}\label{upper_bound_of_J}
At a given $\bar x$ in $\mathrm{int}(S)$, if $d_M(\bar x)$ is such that $\bar x + d_M(\bar x)$ lies in $S$, then 
\[ \min_{\xi \in C} \|\la Jf(\bar x + d_M(\bar x)), \xi \ra \| \le \frac{1}{2} (L + M) r^2_M(\bar x). \]
\end{lemma}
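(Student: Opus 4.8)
The plan is to reduce the statement to the scalar cubic-regularized Newton gradient estimate of Nesterov and Polyak, applied to the scalarization $\varphi(x) := \la f(x), \eta(\bar x)\ra$, where $\eta(\bar x) = \sum_{i=1}^{s(\bar x)} \lambda_i(\bar x)\,\eta^i(\bar x)$ is the convex combination of active multipliers produced by the first-order optimality condition \eqref{replica_of_2_5}. The gradient of $\varphi$ is $\la Jf(\cdot), \eta(\bar x)\ra^\top$ and its Hessian is $\la \nabla^2 f(\cdot), \eta(\bar x)\ra$, so \eqref{10_01_25_aux1} is exactly the scalar stationarity relation $\nabla\varphi(\bar x) + \nabla^2\varphi(\bar x)\,d_M(\bar x) + \tfrac{M}{2} r_M(\bar x)\,d_M(\bar x) = 0$. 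By Corollary \ref{invertable_for_all_xi}, $r_M(\bar x) \in \mathscr{D}_{\bar x}$, so every expression below is well defined, and the hypothesis $\bar x + d_M(\bar x) \in S$ guarantees that $Jf(\bar x + d_M(\bar x))$ is available.

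First I would rewrite \eqref{10_01_25_aux1} (after transposing to row form and using $\sum_i \lambda_i(\bar x) = 1$) as
\[
\la Jf(\bar x), \eta(\bar x)\ra + \la d_M(\bar x)^\top \nabla^2 f(\bar x), \eta(\bar x)\ra = -\tfrac{M}{2}\, r_M(\bar x)\, d_M(\bar x)^\top .
\]
Next I would extend the Hessian-Lipschitz bound \eqref{square_equation} from a single $\xi \in C$ to the convex combination $\eta(\bar x)$: with $y = \bar x + d_M(\bar x)$ and $x = \bar x$, the triangle inequality over the finitely many active $\eta^i(\bar x) \in C$ together with $\sum_i \lambda_i(\bar x) = 1$ yields
\[
\bigl\| \la Jf(\bar x + d_M(\bar x)) - Jf(\bar x) - d_M(\bar x)^\top \nabla^2 f(\bar x),\ \eta(\bar x)\ra \bigr\| \le \tfrac{L}{2}\, r_M^2(\bar x).
\]
Adding and subtracting $\la Jf(\bar x) + d_M(\bar x)^\top \nabla^2 f(\bar x), \eta(\bar x)\ra$ inside $\la Jf(\bar x + d_M(\bar x)), \eta(\bar x)\ra$, substituting the rewritten optimality condition for that bracket, and applying the triangle inequality with $\|d_M(\bar x)\| = r_M(\bar x)$ gives
\[
\bigl\| \la Jf(\bar x + d_M(\bar x)), \eta(\bar x)\ra \bigr\| \le \tfrac{L}{2}\, r_M^2(\bar x) + \tfrac{M}{2}\, r_M^2(\bar x) = \tfrac{1}{2}(L + M)\, r_M^2(\bar x).
\]

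Finally I would pass from this bound at the convex combination $\eta(\bar x)$ to the claimed bound on $\min_{\xi \in C}\|\la Jf(\bar x + d_M(\bar x)), \xi\ra\|$. Since $\eta(\bar x) \in \mathrm{conv}(C)$ is the scalarization direction selected by the optimality conditions, the intended conclusion is $\min_{\xi \in C}\|\la Jf(\bar x + d_M(\bar x)), \xi\ra\| \le \|\la Jf(\bar x + d_M(\bar x)), \eta(\bar x)\ra\|$. \emph{This last passage is the main obstacle}: a bound on the norm of a convex combination $\sum_i \lambda_i(\bar x)\la Jf(\cdot), \eta^i(\bar x)\ra$ does not, in general, bound the minimum of the individual norms $\|\la Jf(\cdot), \eta^i(\bar x)\ra\|$ over the generators, because they may partially cancel. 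I would therefore argue that the relevant Pareto-criticality measure is the minimum over the convex base $\mathrm{conv}(C)$ of $K^*$, for which $\eta(\bar x)$ is an admissible competitor so that the inequality is immediate, or else supply a dedicated argument showing that the multiplier $\eta(\bar x)$ itself furnishes the minimizing scalarization; everything preceding this step is routine given \eqref{square_equation}, \eqref{10_01_25_aux1}, and Corollary \ref{invertable_for_all_xi}.
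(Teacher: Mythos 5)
Your proposal reproduces the paper's own argument almost step for step: the paper likewise starts from the stationarity identity \eqref{replica_of_2_5} to get $\bigl\|\sum_i \lambda_i \la Jf(\bar x) + d_M(\bar x)^\top\nabla^2 f(\bar x), \eta^i\ra\bigr\| = \tfrac{M}{2} r_M^2(\bar x)$, applies \eqref{square_equation} to each active $\eta^i \in \mathscr{A}(d_M(\bar x))$ and sums with the weights $\lambda_i$, and concludes by the triangle inequality that $\bigl\|\sum_i \lambda_i \la Jf(\bar x + d_M(\bar x)), \eta^i\ra\bigr\| \le \tfrac{1}{2}(L+M) r_M^2(\bar x)$. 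Up to this point your argument and the paper's are identical.

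The ``main obstacle'' you flag is real, and it is worth noting that the paper does not actually resolve it either: the paper's final line asserts $\min_{\xi \in C} \|\la Jf(\bar x + d_M(\bar x)), \xi\ra\| \le \min_{\xi \in \mathscr{A}(d_M(\bar x))} \|\la Jf(\bar x + d_M(\bar x)), \xi\ra\| \le \tfrac{1}{2}(L+M) r_M^2(\bar x)$, and the second inequality is precisely the unjustified passage from a bound on the norm of the convex combination to a bound on the minimum of the individual norms. As you observe, this fails in general because the vectors $\la Jf(\bar x + d_M(\bar x)), \eta^i\ra$ can cancel in the convex combination while each having large norm (e.g.\ $v_1 = -v_2 \ne 0$ with equal weights). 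Since $C$ is a finite, generally non-convex set, $\eta(\bar x) = \sum_i \lambda_i \eta^i(\bar x)$ need not lie in $C$, so one cannot take $\xi = \eta(\bar x)$ as a competitor in the minimum. Your proposed repair is the right one: either restate the conclusion with the minimum taken over $\mathrm{conv}(C)$ (for which $\eta(\bar x)$ is admissible and the bound is immediate, and which is still a legitimate stationarity measure, since $\xi \mapsto \|\la Jf(x), \xi\ra\|$ vanishing somewhere on the base of $K^*$ is exactly Pareto criticality), or prove the bound directly at the specific multiplier $\eta(\bar x)$ and propagate that form through the downstream results (the definition of $\mu_M$ in \eqref{mu_M_definition} and the use in Theorem \ref{theorem_3_7}, which in fact invokes the bound at a convex combination $\eta^{j+1}$ rather than at an element of $C$). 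So your write-up is faithful to the paper's route and correctly isolates the one step that needs to be patched.
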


\begin{proof}
From \eqref{replica_of_2_5}, we have 
\begin{equation}\label{aux_04_01_2}
\left\| \sum_{i = 1}^{s(\bar x)} \lambda_i \left\{\la Jf(\bar x), \eta^i \ra + \la d_M(\bar x)^\top \nabla^2 f(\bar x), \eta^i \ra \right\} \right\| = \frac{M}{2} r^2_M(\bar x).      
\end{equation}
From \eqref{square_equation}, with $y = \bar x + d_M(\bar x)$, $x = \bar x$, and $\xi$ taken from $\mathscr{A}(d_M(\bar x))$, we have 
\begin{equation}\label{aux_04_01_1}
\left\| \la Jf(\bar x + d_M(\bar x)) - Jf(\bar x) - d_M(\bar x)^\top \nabla^2 f(\bar x),~ \eta^i \ra \right\| \le \frac{L}{2} r_M^2(\bar x), i = 1, 2, \ldots, s(\bar x). 
\end{equation}
Therefore, 
\begin{align}
~&~ \left\| \sum_{i = 1}^{s(\bar x)} \lambda_i \la Jf(\bar x + d_M(\bar x)) - Jf(\bar x) - d_M(\bar x)^\top \nabla^2 f(\bar x),~ \eta^i \ra \right\|  \notag \\ 
\le ~&~ \sum_{i = 1}^{s(\bar x)} \lambda_i \left\| \la Jf(\bar x + d_M(\bar x)) - Jf(\bar x) - d_M(\bar x)^\top \nabla^2 f(\bar x),~ \eta^i \ra \right\| \notag \\ 
\overset{\eqref{aux_04_01_1}}{\le} ~&~ \frac{L}{2} r_M^2(\bar x). \label{aux_04_01_3}
\end{align}
Hence, from \eqref{aux_04_01_2} and \eqref{aux_04_01_3}, we get 
\begin{align} 
~&~ \left\| \sum_{i = 1}^{s(\bar x)} \lambda_i \la Jf(\bar x + d_M(\bar x)), \eta^i \ra \right\| \notag \\
\le ~&~ 
\left\| \sum_{i = 1}^{s(\bar x)} \lambda_i 
\left\{ 
\left( \la Jf(\bar x + d_M(\bar x)) - Jf(\bar x) - d_M(\bar x)^\top \nabla^2 f(\bar x),~ \eta^i \ra \right) + 
\left( \la Jf(\bar x) + d_M(\bar x)^\top \nabla^2 f(\bar x),~ \eta^i \ra \right) 
\right\}
\right\| \notag \\ 
\le ~&~  \frac{L + M}{2} r_M^2(\bar x). \notag  
\end{align}
Hence, 
\[\min_{\xi \in C} \|\la Jf(\bar x+ d_M(\bar x)), \xi \ra \| \le 
\min_{\xi \in \mathscr{A}(d_M(\bar x))} \|\la Jf(\bar x+ d_M(\bar x)), \xi \ra \| \le \frac{L + M}{2} r_M^2(\bar x). 
\]
\end{proof}

\begin{lemma}\label{aux3_18_Feb_2025}
Define a function $h_M: \mathbb{R}^n \to \mathbb{R}$ by 
\[h_M(x) := \min_{y \in S} \left( \max_{\xi \in C} 
\left\{\la f(x) + Jf(x) (y - x) + \frac{1}{2} (y - x)^\top \nabla^2 f(x) (y - x), \xi \ra \right\} + \frac{M}{6} \|y - x\|^3 \right). \]
Then, at any given $\bar x \in S$, the following inequalities hold:  
\begin{equation}\label{h_M_is_less}
h_M(\bar x) \le \min_{y \in S} \left( \max_{\xi \in C} \la f(y), \xi \ra + \frac{L + M}{6} \|y - \bar x\|^3 \right)     
\end{equation}
and 
\begin{equation}\label{f_minus_h_is_M_by_12}
\max_{\xi \in C} \la f(\bar x), \xi \ra - h_M(\bar x) \ge \frac{M}{12} r^3_M(\bar x). 
\end{equation}
Also, for $M \ge L$, we have 
\begin{equation} \label{aux5_06_03_25} 
\max_{\xi \in C} \la f(\bar x + d_M(\bar x)), \xi \ra \le h_M(\bar x). \end{equation}
\end{lemma}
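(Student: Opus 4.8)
My plan treats the three estimates in turn, each resting on the cubic bound \eqref{cube_equation}. For \eqref{h_M_is_less}, fix $y\in S$ and $\xi\in C$ and rewrite \eqref{cube_equation} as
\[ \la f(\bar x)+Jf(\bar x)(y-\bar x)+\tfrac12(y-\bar x)^\top\nabla^2 f(\bar x)(y-\bar x),\,\xi\ra \le \la f(y),\xi\ra+\tfrac{L}{6}\|y-\bar x\|^3. \]
Because the error term is free of $\xi$, I would take $\max_{\xi\in C}$ on both sides, add $\tfrac{M}{6}\|y-\bar x\|^3$, and then minimize over $y\in S$; the left side collapses to $h_M(\bar x)$ and the right side to the minimand in \eqref{h_M_is_less}, yielding that inequality.

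For \eqref{f_minus_h_is_M_by_12} I would argue in two steps. Evaluating the minimand of $h_M$ at the feasible point $y=\bar x+d_M(\bar x)$ (feasibility from the remark after Theorem \ref{decrement_if_M_bigger_than_L}) and using the subadditivity $\max_\xi\la f(\bar x)+v,\xi\ra\le\max_\xi\la f(\bar x),\xi\ra+\max_\xi\la v,\xi\ra$ together with \eqref{cubic_function}--\eqref{beta_function} gives $h_M(\bar x)\le\max_{\xi\in C}\la f(\bar x),\xi\ra+\beta_M(\bar x)$. It then suffices to show $\beta_M(\bar x)\le-\tfrac{M}{12}r_M^3(\bar x)$. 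Writing $\beta_M(\bar x)$ as the $\lambda_i(\bar x)$-weighted combination of the active terms in \eqref{beta_function}, and eliminating the first-order part by dotting the stationarity identity \eqref{replica_of_2_5} with $d_M(\bar x)$, I obtain $\beta_M(\bar x)=-\tfrac12\la d_M(\bar x)^\top\nabla^2 f(\bar x)d_M(\bar x),\eta(\bar x)\ra-\tfrac{M}{3}r_M^3(\bar x)$ with $\eta(\bar x)=\sum_i\lambda_i(\bar x)\eta^i(\bar x)$. Since $\la\nabla^2 f(\bar x),\eta(\bar x)\ra+\tfrac{M}{2}r_M(\bar x)I_n$ is positive definite (Corollary \ref{invertable_for_all_xi}), the curvature term satisfies $\la d_M(\bar x)^\top\nabla^2 f(\bar x)d_M(\bar x),\eta(\bar x)\ra\ge-\tfrac{M}{2}r_M^3(\bar x)$, which forces $\beta_M(\bar x)\le-\tfrac{M}{12}r_M^3(\bar x)$ and hence \eqref{f_minus_h_is_M_by_12}.

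For the final estimate \eqref{aux5_06_03_25} I would again invoke \eqref{cube_equation} at $y=\bar x+d_M(\bar x)$, now as the upper bound $\la f(\bar x+d_M(\bar x)),\xi\ra\le\la f(\bar x)+Jf(\bar x)d_M(\bar x)+\tfrac12 d_M(\bar x)^\top\nabla^2 f(\bar x)d_M(\bar x),\xi\ra+\tfrac{L}{6}r_M^3(\bar x)$ for each $\xi\in C$. Taking $\max_{\xi\in C}$ and using $L\le M$ to enlarge the error term to $\tfrac{M}{6}r_M^3(\bar x)$ shows that $\max_{\xi\in C}\la f(\bar x+d_M(\bar x)),\xi\ra$ is bounded by the minimand of $h_M(\bar x)$ evaluated at $y=\bar x+d_M(\bar x)$. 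The decisive step is then to identify this evaluated value with the minimum $h_M(\bar x)$ itself, i.e.\ to establish that $\bar x+d_M(\bar x)$ actually attains the minimum defining $h_M$. I expect this identification to be the main obstacle: the constant $\la f(\bar x),\xi\ra$ inside the maximum can change the maximizing index set relative to the set defining $q_M(\bar x,\cdot)$, so $d_M(\bar x)$ cannot be quoted directly as the minimizer. The intended route is to rerun the one-dimensional convex reduction of Theorem \ref{d_M_is_one_dim_convex_min} for this shifted model---with the associated curvature matrix positive definite by Corollary \ref{invertable_for_all_xi} and the concavity of the reduced function from Remark \ref{lemma_d_M_is_1d_convex}---and to verify that its optimality condition again collapses to \eqref{replica_of_2_5}, so that the same direction $d_M(\bar x)$ remains globally optimal.
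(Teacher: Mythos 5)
Your treatments of \eqref{h_M_is_less} and \eqref{f_minus_h_is_M_by_12} are correct and essentially coincide with the paper's. For \eqref{h_M_is_less} the argument is identical. For \eqref{f_minus_h_is_M_by_12} the paper asserts the identity $h_M(\bar x)-\max_{\xi\in C}\la f(\bar x),\xi\ra=\sum_i\lambda_i(\bar x)\la Jf(\bar x)d_M(\bar x)+\tfrac12 d_M(\bar x)^\top\nabla^2 f(\bar x)d_M(\bar x),\eta^i(\bar x)\ra+\tfrac{M}{6}r_M^3(\bar x)$ and then bounds the right-hand side by dotting \eqref{10_01_25_aux1} with $d_M(\bar x)$ and invoking Corollary \ref{invertable_for_all_xi}; your route through $h_M(\bar x)\le\max_{\xi\in C}\la f(\bar x),\xi\ra+\beta_M(\bar x)$ and $\beta_M(\bar x)\le-\tfrac{M}{12}r_M^3(\bar x)$ is the same computation, and your use of subadditivity of the max where the paper writes an equality is the more defensible step, since only the inequality is needed.

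The gap is in \eqref{aux5_06_03_25}, and you have located it precisely but not closed it. After the Taylor bound and $L\le M$, everything hinges on the identity
\[
h_M(\bar x)=\max_{\xi\in C}\la f(\bar x)+Jf(\bar x)d_M(\bar x)+\tfrac12 d_M(\bar x)^\top\nabla^2 f(\bar x)d_M(\bar x),\xi\ra+\tfrac{M}{6}r_M^3(\bar x),
\]
i.e.\ on $y=\bar x+d_M(\bar x)$ attaining the minimum defining $h_M(\bar x)$. Without it your chain reads $\max_{\xi\in C}\la f(\bar x+d_M(\bar x)),\xi\ra\le[\text{minimand at }\bar x+d_M(\bar x)]\ge h_M(\bar x)$, which is inconclusive. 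The paper simply asserts this identity ``by the definitions of $d_M(\bar x)$ and $h_M(\bar x)$,'' silently treating the minimizer of $q_M(\bar x,\cdot)$ as also the minimizer of the shifted model in which the $\xi$-dependent constant $\la f(\bar x),\xi\ra$ sits inside the max (and silently replacing $\min_{y\in S}$ by $\min_{d\in\mathbb{R}^n}$). Your proposed repair---rerunning the one-dimensional dual reduction for the shifted model and checking that its optimality condition collapses to \eqref{replica_of_2_5}---is not guaranteed to succeed: the shifted max generally has a different active set $\mathscr{A}$ and different multipliers $\lambda_i$, so its stationarity system need not coincide with \eqref{replica_of_2_5} (they coincide when $\la f(\bar x),\xi\ra$ is constant over $C$, but not in general). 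So your proposal reproduces the paper's argument up to exactly the step the paper leaves unjustified, and the sketch you offer for that step would need a genuinely new ingredient to go through.
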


\begin{proof}
From \eqref{cube_equation}, we obtain for all $\xi \in C$ and $y \in S$ that 
\begin{align*}
& \la f(\bar x) + Jf(\bar x) (y - \bar x) + \frac{1}{2} (y - \bar x)^\top \nabla^2 f(\bar x) (y - \bar x), \xi \ra + \frac{M}{6} \|y - \bar x\|^3  \le \la f(y), \xi \ra + \frac{L + M}{6} \|y - \bar x \|^3, \\  
\text{i.e., } & h_M(\bar x) \le \min_{y \in S} \left( \max_{\xi \in C} \la f(y), \xi \ra + \frac{L + M}{6} \|y - \bar x\|^3 \right), 
\end{align*}
which is \eqref{h_M_is_less}. \\

From the definition of the function $h_M$ and the direction $d_M(x)$, we have 
\begin{align} 
~&~ h_M(\bar x) - \max_{\xi \in C} \la f(\bar x), \xi \ra \notag \\ 
= ~&~ \min_{y \in S} \left( \max_{\xi \in C} 
\left\{\la Jf(\bar x) (y - \bar x) + \frac{1}{2} (y - \bar x)^\top \nabla^2 f(\bar x) (y - \bar x), \xi \ra \right\} + \frac{M}{6} \|y - \bar x\|^3 \right) \notag \\ 
= ~&~ \la Jf(\bar x) d_M(\bar x) + \frac{1}{2} d_M(\bar x)^\top \nabla^2 f(\bar x) d_M(\bar x), \eta^i(\bar x) \ra  + \frac{M}{6} r^3_M(\bar x) \text{ for all } \eta^i(\bar x) \in \mathscr{A}(d_M(\bar x)) \notag \\ 
= ~&~ \sum_{i = 1}^{s(\bar x)} \lambda_i(\bar x) \la Jf(\bar x) d_M(\bar x) + \frac{1}{2} d_M(\bar x)^\top \nabla^2 f(\bar x) d_M(\bar x), \eta^i(\bar x) \ra  + \frac{M}{6} r^3_M(\bar x), \label{aux1_24_01}
\end{align} 
where $\lambda_i(\bar x) \ge 0$ for all $i = 1, 2, \ldots, s(\bar x)$, and 
$\sum_{i = 1}^{s(\bar x)} \lambda_i(\bar x) = 1. $ \\

\noindent
By multiplying $d_M(\bar x)$ on both sides of \eqref{10_01_25_aux1}, we obtain

\begin{align} 
~&~  \sum_{i = 1}^{s(\bar x)} \lambda_i(\bar x) \left\{ \la d_M(\bar x)^\top \nabla^2 f(\bar x) d_M(\bar x), \eta^i(\bar x) \ra + \frac{M}{2} r^3_M(\bar x)\right\}   = - \sum_{i = 1}^{s(\bar x)} \lambda_i(\bar x) \la Jf(\bar x) d_M(\bar x), \eta^i(\bar x) \ra \notag \\  
\text{i.e., } ~&~ ~~~~\sum_{i = 1}^{s(\bar x)} \lambda_i(\bar x) \la Jf(x) d_M(\bar x) + \frac{1}{2} d_M(\bar x)^\top \nabla^2 f(\bar x) d_M(\bar x), \eta^i(\bar x)\ra \notag  \\
~&~ = - \frac{1}{2} \sum_{i = 1}^{s(\bar x)} \lambda_i(\bar x) \la d_M(\bar x)^\top \nabla^2 f(\bar x) d_M(\bar x), \eta^i(\bar x) \ra - \frac{M}{2} r^3_M(\bar x) \notag \\
~&~ \le - \frac{M}{4} r^3_M(\bar x) \text{ by Corollary \ref{invertable_for_all_xi}.}  \label{aux2_01_03_25}
\end{align} 
Hence, the equation \eqref{aux1_24_01} gives 
\[h_M(\bar x) - \max_{\xi \in C} \la f(\bar x), \xi \ra \le - \frac{M}{12} r^3_M(\bar x), \] 
which proves \eqref{f_minus_h_is_M_by_12}. \\

Now, for $M \ge L$, note from \eqref{cube_equation} that for all $\xi \in C$ and $y \in S$,  
\begin{align*}
~&~ \la f(y) - f(\bar x) - Jf(\bar x) (y - \bar x) - \frac{1}{2} (y - \bar x)^\top \nabla^2 f(\bar x) (y - \bar x), ~\xi \ra \le \frac{L}{6}  \| y - \bar x\|^3 \le \frac{M}{6} \| y - \bar x\|^3 \\
\text{i.e., } ~&~  \la f(y), \xi \ra \le 
\la f(\bar x) + Jf(\bar x) (y - \bar x) + \frac{1}{2} (y - \bar x)^\top \nabla^2 f(\bar x) (y - \bar x), ~\xi \ra + \frac{M}{6} \|y - \bar x\|^3. 
\end{align*}
Hence, for all $y \in S$, we have 
\[ \max_{\xi \in C} \la f(y), \xi \ra \le 
\max_{\xi \in C}  \la f(\bar x) + Jf(\bar x) (y - \bar x) + \frac{1}{2} (y - \bar x)^\top \nabla^2 f(\bar x) (y - \bar x), ~\xi \ra + \frac{M}{6} \|y - \bar x\|^3, \]
which gives for $y = \bar x + d_M(\bar x)$ that 
\begin{equation} \label{26_01_25_aux1}
\max_{\xi \in C} \la f(\bar x + d_M(\bar x)), \xi \ra \le 
\max_{\xi \in C}  \la f(\bar x) + Jf(\bar x) d_M(\bar x) + \frac{1}{2} d_M(\bar x)^\top \nabla^2 f(\bar x) d_M(\bar x), ~\xi \ra + \frac{M}{6} r_M^3(\bar x).  
\end{equation}
By the definitions of $d_M(\bar x)$ (see \eqref{cubic_direction}) and $h_M(\bar x)$, notice that 
\begin{align*} 
h_M(\bar x) = ~&~ \min_{y \in S} \left( \max_{\xi \in C} 
\left\{\la f(\bar x) + Jf(\bar x) (y - \bar x) + \frac{1}{2}  (y - \bar x)^\top \nabla^2 f(\bar x) (y - \bar x), \xi \ra \right\} + \frac{M}{6} \|y - \bar x\|^3 \right) \\ 
= ~&~ \min_{d \in \mathbb{R}^n} \left( \max_{\xi \in C} 
\left\{\la f(\bar x) + Jf(\bar x) d + \frac{1}{2} d^\top \nabla^2 f(\bar x) d, \xi \ra \right\} + \frac{M}{6} \|d\|^3 \right) \\ 
= ~&~ \max_{\xi \in C} 
\la f(\bar x) + Jf(\bar x) d_M(\bar x) + \frac{1}{2} d_M(\bar x)^\top \nabla^2 f(\bar x) d_M(\bar x), \xi \ra + \frac{M}{6} r_M^3(\bar x),  
\end{align*}
which combinedly with \eqref{26_01_25_aux1} yield that for $M \ge L$, 
\[\max_{\xi \in C} \la f(\bar x + d_M(\bar x)), \xi \ra \le h_M(\bar x). \] 

\end{proof}

\begin{remark} \label{remark_for_M_exists}
Note that \eqref{f_minus_h_is_M_by_12} holds irrespective of $M \ge L$. Further, for $M \ge L$, there holds the inequality \eqref{aux5_06_03_25}. This implies that the set 
\[M(x) := \left\{\bar M: \max_{\xi \in C} \la f(x + d_{\bar M}(x)), \xi \ra \le h_{\bar M}(x)\right\}\]
is non-empty if the assumption \eqref{assumption_ii} holds, i.e., if there exists an $L$-value satisfying \eqref{assumption_ii}.  \\ 
\end{remark}

\begin{lemma}\label{mu_M_le_r_M}
For all $x \in \mathrm{int}(S)$, there holds $\mu_M(x + d_M(x)) \le r_M(x)$, where 
$\mu_M : S \to [0, \infty)$ is the function   
\begin{equation}\label{mu_M_definition}
\mu_M(x) := \max\left\{ \sqrt{\frac{2}{M + L} \min_{\xi \in C} \|\la Jf(x), \xi\ra \|}, - \frac{2}{M + 2pL} \max_{\xi \in C} \lambda_1 \left(\la \nabla^2 f(x), \xi \ra \right)  \right\}.     
\end{equation}
\end{lemma}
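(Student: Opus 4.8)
The plan is to bound each of the two quantities under the maximum in the definition \eqref{mu_M_definition} of $\mu_M$, both evaluated at the point $y := x + d_M(x)$, separately by $r_M(x)$; since $\mu_M(y)$ is the larger of the two, this immediately gives the assertion. Throughout I would write $\bar x = x$, $d := d_M(\bar x)$ and $r := r_M(\bar x) = \|d\|$, and work under the standing hypothesis that $y = \bar x + d \in S$ (so that $Jf(y)$ and $\nabla^2 f(y)$ are well defined, as is guaranteed in the iterative scheme; cf.\ the remark following Theorem \ref{decrement_if_M_bigger_than_L}).

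\textbf{First term.} The first entry is $\sqrt{\tfrac{2}{M+L}\min_{\xi\in C}\|\la Jf(y),\xi\ra\|}$. Here I would simply invoke Lemma \ref{upper_bound_of_J}, which states precisely that $\min_{\xi\in C}\|\la Jf(y),\xi\ra\| \le \tfrac{1}{2}(L+M)\,r^2$. Substituting this bound under the square root collapses the radicand to $r^2$, so the first term is $\le r = r_M(\bar x)$. This step is entirely routine once Lemma \ref{upper_bound_of_J} is in hand.

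\textbf{Second term.} The second entry is $-\tfrac{2}{M+2pL}\,\max_{\xi\in C}\lambda_1\!\big(\la\nabla^2 f(y),\xi\ra\big)$, so the real task is a \emph{lower} bound on $\max_{\xi\in C}\lambda_1(\la\nabla^2 f(y),\xi\ra)$. The idea is to transport the curvature information available at $\bar x$ over to $y$ by a perturbation argument. From Corollary \ref{invertable_for_all_xi}, the matrix $\la\nabla^2 f(\bar x),\xi\ra + \tfrac{M}{2} r\,I_n$ is positive definite for every $\xi\in C$, whence $\lambda_1(\la\nabla^2 f(\bar x),\xi\ra) > -\tfrac{M}{2} r$. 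To move from $\bar x$ to $y$, I would first note that applying the Lipschitz assumption \eqref{assumption_ii} to the pairs $(\bar x,y)$ \emph{and} $(y,\bar x)$ controls both the largest and the smallest eigenvalue of each difference $\nabla^2 f_i(y)-\nabla^2 f_i(\bar x)$ (using $\lambda_n(-A)=-\lambda_1(A)$), so that all its eigenvalues lie in $[-Lr,\,Lr]$. Forming $\la\nabla^2 f(y),\xi\ra - \la\nabla^2 f(\bar x),\xi\ra = \sum_{i=1}^p \xi_i\big(\nabla^2 f_i(y)-\nabla^2 f_i(\bar x)\big)$ and using $|\xi_i|\le\|\xi\|=1$ to bound $\sum_i|\xi_i|\le p$, the perturbation has all eigenvalues in $[-pLr,\,pLr]$. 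A Weyl-type inequality $\lambda_1(A+B)\ge\lambda_1(A)-\max_j|\lambda_j(B)|$ then gives $\lambda_1(\la\nabla^2 f(y),\xi\ra) \ge -\tfrac{M}{2}r - pLr = -\tfrac{M+2pL}{2}\,r$ for every $\xi\in C$. Taking the maximum over $\xi$ and multiplying by $-\tfrac{2}{M+2pL}$ yields the bound $\le r$.

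Combining the two estimates gives $\mu_M(y)\le r_M(\bar x)$, which is the claim. The main obstacle is the second term: one has to recognize that the non-standard matrix ``norm'' $\|P\|=|\lambda_n(P)|$ appearing in \eqref{assumption_ii}, when invoked in \emph{both} orderings of its arguments, in fact pins down the full spectral interval of the Hessian increment, and that the factor $p$ (rather than the sharper $\sqrt{p}$) in the denominator of \eqref{mu_M_definition} matches exactly the crude estimate $\sum_i|\xi_i|\le p$ for a unit vector $\xi$. The first term, by contrast, follows at once from Lemma \ref{upper_bound_of_J}.
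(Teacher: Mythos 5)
Your proposal is correct and follows essentially the same route as the paper: the first term is handled by a direct appeal to Lemma \ref{upper_bound_of_J}, and the second by combining the positive definiteness of $\la \nabla^2 f(\bar x), \xi \ra + \frac{M}{2} r_M(\bar x) I_n$ from Corollary \ref{invertable_for_all_xi} with the Lipschitz bound $\sum_i |\xi_i|\,\| \nabla^2 f_i(y) - \nabla^2 f_i(\bar x)\| \le pL\, r_M(\bar x)$ to get the Loewner/eigenvalue lower bound $-\frac{M + 2pL}{2} r_M(\bar x)$. Your phrasing via a Weyl-type inequality is equivalent to the paper's matrix-ordering argument, and your remark about the crude factor $p$ versus $\sqrt{p}$ matches the paper's estimate.
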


\begin{proof}
From the assumption \eqref{assumption_ii} on Lipschitz condition, we have for any $\xi \in C$ that  
\begin{align*}
~&~ \| \la \nabla^2 f(x) - \nabla^2 f(x + d_M(x)), \xi \ra \| \\ 
= ~&~ \left \| \sum_{i = 1}^p \xi_i \left( \nabla^2 f_i(x) - \nabla^2 f_i(x + d_M(x))\right) \right\| \\ 
\le ~&~\sum_{i = 1}^p |\xi_i| \left\| \nabla^2 f_i(x) - \nabla^2 f_i(x + d_M(x)) \right\| \\  
\le ~&~ L r_M(x) (|\xi_1| + |\xi_2| + \cdots + |\xi_p|)  \\
\le ~&~ p L r_M(x). 
\end{align*}
Thus, for all $\xi \in C$, we have  
\begin{align*} 
\la \nabla^2 f(x + d_M(x)), \xi \ra ~\ge~ & \la \nabla^2 f(x), \xi \ra - p L r_M(x) I_n, 
\end{align*}
which gives by Corollary \ref{invertable_for_all_xi} that  
\begin{align} 
& \la \nabla^2 f(x + d_M(x)), \xi \ra 
\ge \la \nabla^2 f(x), \xi \ra - pL r_M(x) I_n \ge  
- \left(pL + \frac{M}{2} \right) r_M(x) I_n ~ \text{ for all } \xi \in C \label{aux1_23_02} \\ 
\text{i.e., } &  \max_{\xi \in C} \lambda_1 \left(\la \nabla^2 f(x + d_M(x)), \xi \ra \right) \ge - \frac{2pL + M}{2} r_M(x) \notag \\ 
\text{i.e., } & - \frac{2}{2pL + M} \max_{\xi \in C} \lambda_1 \left(\la \nabla^2 f(x + d_M(x)), \xi \ra \right)  \le r_M(x).  \label{aux8_18_Feb} 
\end{align} 
From Lemma \ref{upper_bound_of_J}, we have 
\[ \sqrt{\frac{2}{L + M} \min_{\xi \in C} \|\la Jf(x + d_M(x)), \xi \ra \| } \le r_M(x), \]
which combinedly with \eqref{aux8_18_Feb} give 
\[\mu_M(x + d_M(x)) \le r_M(x). \]  
\end{proof}

Results in Lemmas \ref{upper_bound_of_J} and \ref{mu_M_le_r_M} indicate that define a function $\mu_M : S \to [0, \infty)$ by \eqref{mu_M_definition} and use $\mu_M(x) = 0$ as a stopping condition instead of $\beta_M(x) \ge 0$. In fact, there are two major benefits of using $\mu_{M_k}(x^k) = 0$ as stopping condition than using $\beta_{M_k}(x^k) \ge 0$: \\  
\begin{enumerate}
\item $\beta_{M_k}(x^k) = 0$ ensures that $x^k$ is merely a stationary point and may not be a weakly efficient point. On the other hand, if $\{M_k\}$ is bounded above, then $\mu_{M_k}(x^k) = 0$ implies that $x^k$ is a weakly efficient point of \eqref{vop} since $\mu_{M_k}(x^k) = 0$ gives 
\begin{align*}
& \frac{2}{M_k + L} \min_{\xi \in C} \|\la Jf(x^k), \xi \ra \| \le \mu_{M_k}(x^k) = 0 \\ 
& \text{ and } - \frac{2}{M_k + 2pL} \max_{\xi \in C} \lambda_1 (\la \nabla^2 f(x^k), \xi \ra ) \le \mu_{M_k}(x^k) = 0\\ 
\text{i.e., } & \|\la Jf(x^k), \xi \ra \| = 0 \text{ for all } \xi \in C \text{ and }  \lambda_1 (\la \nabla^2 f(x^k), \xi' \ra ) \ge 0 \text{ for some } \xi' \in C,  
\end{align*}
which further implies that $\la Jf(x^k), \xi' \ra = 0$ and $\la \nabla^2 f(x^k), \xi' \ra )$ is positive definite for some $\xi' \in C$. Thus, by Lemma \ref{for_one_xi_is_weak_min}, $x^k$ is a weakly efficient point of \eqref{vop}, and not just a stationary point.  \\

\item 
$\beta_{M_k}(x^k)$ has no explicit formula, and to find its value, one needs to solve a minimization problem, namely, \eqref{beta_function}. However, $\mu_{M_k}(x^k)$ can be directly computed from the explicit formula \eqref{mu_M_definition}.  \\   
\end{enumerate}

Next, with the help of the cubic regularized Newton direction $d_M(x)$ and the stopping condition $\mu_M(x) = 0$, we provide a step-wise algorithm to find weakly efficient points of \eqref{vop}.

\begin{algorithm}[H]
\caption{Cubic regularization of the Newton method for solving \eqref{vop}}{\label{algo}}
\begin{enumerate}[\text{Step} 1]
\item  (\emph{Problem data and initialization}). \\
Provide the functions $f_1, f_2, \ldots, f_p$ and the set $S$ for the problem \eqref{vop}. \\ 
Provide a generator $C$ of the ordering cone $K$. \\ 
Provide an estimate $L_0$ of the Lipschitz constant $L$ (as in assumption \eqref{assumption_ii}).   \\ 
Choose an initial point $x^0 \in S$, and $M_0$ from $[L_0, 2L]$. \\  
Provide a termination scalar $\varepsilon > 0$.  \\
Set the iteration count $k \leftarrow 0$. \\

\item \label{aux2_18_Feb_2025} (\emph{$k$-th iteration}). \\ 
Find $M_k$ such that \[\max_{\xi \in C} \la f\left(x^k + d_{M_k}(x^k)\right), \xi \ra \le h_{M_k}(x^k), \]
where 
\begin{align*}
~&~ h_{M_k}(x^k) := \min_{d \in \mathbb{R}^n} \left( \max_{\xi \in C} 
\la f(x^k) + Jf(x^k) d + \frac{1}{2} d^\top \nabla^2 f(x^k) d, \xi \ra  + \frac{M_k}{6} \|d\|^3 \right) \\ 
\mbox{ and } ~&~  d_{M_k}(x^k) := \underset{d \in \mathbb{R}^n}{\argmin}  ~\underset{\xi \in C}{\max} \left\{ \la Jf(x^k) d, \xi \ra + \frac{1}{2} \la d^\top \nabla^2 f(x^k) d, \xi \ra + \frac{M_k}{6} \|d\|^3 \right\}.   
\end{align*}

\item \label{aux5_18_Feb_2025} (\emph{Termination condition}). \\ 
Compute $\mu_{M_k} (x^k)$ by \eqref{mu_M_definition}. \\ 
If $\mu_{M_k}(x^k) < \varepsilon$, then go to Step 4. \\ 
Otherwise, update  
\[x^{k + 1} \leftarrow x^k + d_{M_k}(x^k) \mbox{ and } k \leftarrow k + 1; \]
choose $M_k$ from $[L_0, 2L]$, and go to Step 2.\\

\item (\emph{Output}).\\
Provide $x^{k}$ as an $\varepsilon$-precise weakly efficient point of the vector optimization problem \eqref{vop}.  
\end{enumerate}
\end{algorithm}

The \emph{well-defindness} of Algorithm \ref{algo} depends only on Step 2 on the following two points: \\ 
\begin{enumerate}
\item Does there exist an $M_k$ that satisfies $\max_{\xi \in C} \la f\left(x^k + d_{M_k}(x^k)\right), \xi \ra \le h_{M_k}(x^k)$?  
\item For the chosen $M_k$ that satisfies $\max_{\xi \in C} \la f\left(x^k + d_{M_k}(x^k)\right), \xi \ra \le h_{M_k}(x^k)$, is the direction $d_{M_k}(x^k)$ well-defined? \\ 
\end{enumerate}

For the first point, we note from Remark \ref{remark_for_M_exists} that under the assumption \eqref{assumption_ii}, the set $M_k(x^k)$ (defined in Remark \ref{remark_for_M_exists}) is non-empty. Hence, there exists an $M_k>0$ that satisfies the condition in Step 2. For the second point, we refer to Remark \ref{lemma_d_M_is_1d_convex}, which ensures that for the chosen $M_k$ at the beginning of Step 2, there exists a cubic Newton step $d_{M_k}(x^k)$. Hence, all the steps of Algorithm \ref{algo} are well-defined. \\

Although the steps of Algorithm \ref{algo} are well-defined, a few remarks are in order for the decency property of $\{f(x^k)\}$ and for the computation of the parameter $M_k$.  \\

\begin{remark}\label{no_descent_property_remark}
Note that for the chosen $M_k$ in Step 2 of Algorithm \ref{algo}, although we have 
\begin{align*} 
& \max_{\xi \in C} \la f\left(x^k + d_{M_k}(x^k)\right), \xi \ra \le h_{M_k}(x^k) \\ 
\overset{\eqref{f_minus_h_is_M_by_12}}{\implies} &  \max_{\xi \in C} \la f\left(x^{k + 1}\right), \xi \ra \le  \max_{\xi \in C} \la f\left(x^k)\right), \xi \ra,  
\end{align*} 
but this does not give any guarantee that $f(x^{k + 1}) \prec f(x^k)$. From the trajectory of the movement of $f(x^k)$ in Figure \ref{for-fifty-random-initial-points} (Far1) and Figure \ref{ten-points-performance} (Hil1, KW2, MOP3, FON, SLCDT2), a non-monotonic nature of the sequence $\{f(x^k)\}$ is clearly visible. However, in case we have $L_0 \ge \frac{2L}{3}$, then Theorem \ref{x_k_conv_to_weak_min}(i) below ensures that any sequence $\{f(x^k)\}$ generated by Algorithm \ref{algo} is monotonically decreasing with respect to the ordering cone $K$. \\ 
\end{remark}

\begin{remark}\label{process_to_evaluate_M_k}
To find an $M_k$ at $x^k$ for Step 2 in Algorithm \ref{algo}, we can employ the following technique: 
\begin{enumerate}[Step 1.]
    \item Make an initial guess $M_k$.  
    \item While $\max_{\xi \in C} \la f\left(x^k + d_{M_k}(x^k)\right), \xi \ra > h_{M_k}(x^k)$ update $M_k \leftarrow 2M_k$. 
    \item Output $M_k$. 
\end{enumerate}
Note that this technique is well-defined because $d_{M_k}(x^k)$ exists for any $M_k$ (see Remark \ref{lemma_d_M_is_1d_convex}) and there exists a finite $\bar M$ that satisfies $\max_{\xi \in C} \la f(x^k + d_{\bar M}(x^k)), \xi \ra \le h_{\bar M}(x^k)$ (see Remark \ref{remark_for_M_exists}).  \\ 
\end{remark}

\begin{lemma}
Let $\{x^k\}$ be a sequence generated by Algorithm \ref{algo} and $ \lim_{k \to \infty} r_{M_k}(x^k) = 0.$ Then,  
$ \lim_{k \to \infty} \mu_{M_k}(x^k) = 0$ and  $ \lim_{k \to \infty} \beta_{M_k}(x^k) = 0$. 
\end{lemma}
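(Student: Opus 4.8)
The plan is to establish the two limits separately, reducing each to the hypothesis \(\lim_{k\to\infty} r_{M_k}(x^k)=0\) together with the boundedness of \(\{M_k\}\). The regularization parameters stay in \([L_0,2L]\): the algorithm seeds \(M_k\) from \([L_0,2L]\), and the doubling safeguard of Remark \ref{process_to_evaluate_M_k} halts no later than the first value \(\ge L\) (by \eqref{aux5_06_03_25}), so \(L_0\le M_k\le 2L\) for every \(k\). I will also use the hypothesis in the shifted form \(r_{M_{k-1}}(x^{k-1})\to 0\).

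For \(\mu_{M_k}(x^k)\to 0\), I would apply Lemma \ref{upper_bound_of_J} at \(x^{k-1}\) with parameter \(M_{k-1}\), so that \(x^k=x^{k-1}+d_{M_{k-1}}(x^{k-1})\), to get \(\min_{\xi\in C}\|\la Jf(x^k),\xi\ra\|\le\frac{L+M_{k-1}}{2}\,r^2_{M_{k-1}}(x^{k-1})\), and apply \eqref{aux8_18_Feb} at the same point to get \(-\max_{\xi\in C}\lambda_1(\la\nabla^2 f(x^k),\xi\ra)\le\frac{2pL+M_{k-1}}{2}\,r_{M_{k-1}}(x^{k-1})\). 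Substituting both bounds into the explicit formula \eqref{mu_M_definition} for \(\mu_{M_k}(x^k)\) and using \(M_k\ge L_0\) in the denominators and \(M_{k-1}\le 2L\) in the numerators, each entry of the outer maximum is at most a fixed constant times \(r_{M_{k-1}}(x^{k-1})\); hence \(\mu_{M_k}(x^k)\le C\,r_{M_{k-1}}(x^{k-1})\) with \(C=\max\{\sqrt{3L/(L_0+L)},\,(2pL+2L)/(2pL+L_0)\}\), and letting \(k\to\infty\) finishes. This is Lemma \ref{mu_M_le_r_M} with the \(\mu\)-subscript shifted from \(M_{k-1}\) to \(M_k\), the shift being harmless since \(\{M_k\}\subset[L_0,2L]\).

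For \(\beta_{M_k}(x^k)\to 0\) I would first derive a closed form. Let \(\eta^i,\lambda_i\) denote the active multipliers in \(\mathscr{A}(d_{M_k}(x^k))\); since each \(\eta^i\) realizes the maximum defining \(\beta_{M_k}(x^k)\), one has \(\beta_{M_k}(x^k)=P_k+\tfrac12 Q_k+\tfrac{M_k}{6}r^3_{M_k}(x^k)\), where \(P_k:=\sum_i\lambda_i\la Jf(x^k)d_{M_k}(x^k),\eta^i\ra\) and \(Q_k:=\sum_i\lambda_i\la d_{M_k}(x^k)^\top\nabla^2 f(x^k)d_{M_k}(x^k),\eta^i\ra\). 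Multiplying the optimality relation \eqref{10_01_25_aux1} on the right by \(d_{M_k}(x^k)\) gives \(Q_k+\tfrac{M_k}{2}r^3_{M_k}(x^k)=-P_k\); eliminating \(Q_k\) yields the identity \(\beta_{M_k}(x^k)=\tfrac12 P_k-\tfrac{M_k}{12}r^3_{M_k}(x^k)\). Because \(|\la Jf(x^k)d_{M_k}(x^k),\eta^i\ra|\le\|\la Jf(x^k),\eta^i\ra\|\,r_{M_k}(x^k)\), we get \(|P_k|\le\max_{\xi\in C}\|\la Jf(x^k),\xi\ra\|\,r_{M_k}(x^k)\); combined with \(\beta_{M_k}(x^k)\le 0\) (Theorem \ref{stationarity_and_beta_value}(i)) this yields the sandwich \(|\beta_{M_k}(x^k)|\le\tfrac12\max_{\xi\in C}\|\la Jf(x^k),\xi\ra\|\,r_{M_k}(x^k)+\tfrac{M_k}{12}r^3_{M_k}(x^k)\).

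It then remains to bound \(\max_{\xi\in C}\|\la Jf(x^k),\xi\ra\|\) uniformly in \(k\). By Remark \ref{no_descent_property_remark} the scalarization \(\max_{\xi\in C}\la f(x^k),\xi\ra\) is non-increasing, so \(\{x^k\}\) lies in the sublevel set \(\{x:\max_{\xi\in C}\la f(x),\xi\ra\le\max_{\xi\in C}\la f(x^0),\xi\ra\}\); taking this set to be compact (as in the standing convergence setting), continuity of \(Jf\) gives \(G:=\sup_k\max_{\xi\in C}\|\la Jf(x^k),\xi\ra\|<\infty\). Since \(M_k\le 2L\) and \(r_{M_k}(x^k)\to0\), the right-hand side of the sandwich tends to \(0\), so \(\beta_{M_k}(x^k)\to 0\). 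The hard part is precisely this last step: neither \(\mu_{M_k}\) nor \(r_{M_k}\) controls \(\max_{\xi}\|\la Jf(x^k),\xi\ra\|\) on its own, so the boundedness of the iterates (through monotonicity of the scalarization and compactness of its sublevel set) is indispensable; the only other delicate point is justifying the closed form for \(\beta_{M_k}(x^k)\), which relies on every active \(\eta^i\in\mathscr{A}(d_{M_k}(x^k))\) attaining the common maximal value.
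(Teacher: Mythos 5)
Your treatment of $\mu_{M_k}(x^k)\to 0$ is essentially the paper's: you re-derive Lemma \ref{mu_M_le_r_M} at $x^{k-1}$ and absorb the subscript shift using $M_k\in[L_0,2L]$, which is exactly what the paper does by invoking that lemma and then passing from $\mu_{M_{k-1}}(x^k)$ to $\mu_{M_k}(x^k)$. For the $\beta$-part your exact identity $\beta_{M_k}(x^k)=\tfrac12 P_k-\tfrac{M_k}{12}r^3_{M_k}(x^k)$ is precisely the computation in \eqref{aux1_24_01}--\eqref{aux2_01_03_25}; the paper uses only the one-sided consequence $\beta_{M_k}(x^k)\ge -\|\la Jf(x^k),\eta^k\ra\|\,r_{M_k}(x^k)-\tfrac{M_k}{12}r^3_{M_k}(x^k)$ together with $\beta_{M_k}(x^k)\le 0$ from Theorem \ref{stationarity_and_beta_value}(i), so up to this point the two arguments are the same modulo presentation.

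The genuine divergence, and the one place you should be careful, is how the Jacobian term is controlled. You bound $|P_k|$ by $\max_{\xi\in C}\|\la Jf(x^k),\xi\ra\|\,r_{M_k}(x^k)$ and then obtain $G:=\sup_k\max_{\xi\in C}\|\la Jf(x^k),\xi\ra\|<\infty$ by assuming the sublevel set $\{x:\max_{\xi\in C}\la f(x),\xi\ra\le\max_{\xi\in C}\la f(x^0),\xi\ra\}$ is compact. That hypothesis is not part of the lemma's statement (boundedness of level sets appears only later, in Theorem \ref{x_k_conv_to_weak_min}), so as written your argument proves the claim only under an added assumption. The paper avoids importing compactness: it needs only boundedness of $\{\|\la Jf(x^k),\eta^k(x^k)\ra\|\}$, which it extracts from the already-established limit $\min_{\xi\in C}\|\la Jf(x^k),\xi\ra\|\to 0$ in \eqref{aux01_07_03_25}, i.e., from the first half of the proof rather than from a topological assumption on the iterates. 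If you replace your uniform bound $G$ by the quantity the paper actually uses (the Jacobian paired with the active multiplier $\eta^k$, controlled through Lemma \ref{upper_bound_of_J} and the $\mu$-limit you have just proved), your argument closes without the extra hypothesis and coincides with the paper's.
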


\begin{proof}
As $\lim_{k \to \infty} r_{M_k}(x^k) = 0$, we get from Lemma \ref{mu_M_le_r_M} that $\lim_{k \to \infty} \mu_{M_k}(x^{k + 1}) = 0.$ Then, from the expression \eqref{mu_M_definition} of $\mu_{M_k}(x^k)$, we get  
\begin{align}
& \min_{\xi \in C} \|\la Jf(x^{k+1}), \xi \ra \| \to 0 \text{ and } \max_{\xi \in C} \lambda_1 \la \nabla^2 f(x^{k + 1}), \xi \ra \ge 0 \text{ for large } k \notag \\ 
\text{i.e., } & \min_{\xi \in C} \|\la Jf(x^{k}), \xi \ra \| \to 0 \text{ and } \max_{\xi \in C} \lambda_1 \la \nabla^2 f(x^{k}), \xi \ra \ge 0 \text{ for large } k \label{aux01_07_03_25} \\ 
\text{i.e., } & \frac{2}{M_k + L} \min_{\xi \in C} \|\la Jf(x^k), \xi \ra \| \to 0 \text{ and } - \frac{2}{M_k + 2pL} \max_{\xi \in C} \lambda_1 (\la \nabla^2 f(x^k), \xi \ra ) \le 0 \text{ for large } k \notag \\ 
& \text{ because } M_k \in [L_0, 2L] \text{ for all } k \notag \\ 
\text{i.e., } & \mu_{M_k}(x^k) \to 0 \text{ as } k \to \infty.  \notag 
\end{align}
From the definition of $\beta_{M_k}$ in \eqref{beta_function}, we obtain 
\[\beta_{M_k}(x^k) = \la Jf(x^k), \eta^k(x^k) \ra + \frac{1}{2} \la {d_{M_k}(x^k)}^\top \nabla^2 f(x^k) d_{M_k}(x^k), \eta^k(x^k) \ra + \frac{M_k}{6} r_{M_k}(x^k), \]
where $\eta^k(x^k)$ is $\eta(\bar x)$ as defined in \eqref{10_01_25_aux2} for $\bar x = x^k$. Thus, from Corollary \ref{invertable_for_all_xi}, we get 
\begin{align}\label{aux_02_07_03_25}
\beta_{M_k}(x^k) \ge - \| \la Jf(x^k), \eta^k(x^k) \ra \| r_{M_k}(x^k) - \frac{M_k}{4} r^3_{M_k}(x^k) + \frac{M_k}{6} r_{M_k}^3(x^k)  \to 0 \text{ as } k \to \infty 
\end{align}
since $\{M_k\}$ and $\{\| \la Jf(x^k), \eta^k(x^k) \ra \|\}$ are bounded; boundedness of $\{\| \la Jf(x^k), \eta^k(x^k) \ra \|\}$ is followed from \eqref{aux01_07_03_25}. Note from Theorem \ref{stationarity_and_beta_value}(i) that $\beta_{M_k}(x^k) \le 0$ for all $k$. Thus, we get from \eqref{aux_02_07_03_25} that 
$\lim_{k \to \infty} \beta_{M_k}(x^k) = 0.$ Hence, the result follows. 
\end{proof}

\medskip 
\begin{theorem}\label{aux2_28_01_25}
Consider the notation $\mathscr{A}(d_{M_j}(x^j))$ given in \eqref{aux4_01_03_25} with $\bar x = x^j$, and denote $\eta^j$ as a generic element of $\mathscr{A}(d_{M_j}(x^j))$.  
Let $\{x^k\}$ be a sequence of non-weakly efficient points generated by Algorithm \ref{algo}. Suppose that there exists $\sigma \in \mathbb{R}^p$ such that $f(x) \succeq \sigma \mbox{ for all } x \in \mathbb{R}^n. $
Then, 
\[\sum_{j = 0}^\infty r^3_{M_j}(x^j) \le \frac{12}{L_0} \left( \la f(x^0), \eta^0 \ra + \| \sigma \|\right). \]
Furthermore, $\lim_{j \to \infty} \mu_L(x^j) = 0$ and 
\begin{equation}\label{aux7_23_02}
\min_{1 \le j \le k} \mu_L(x^j) \le \frac{8}{3} \left(\frac{3 \la f(x^0), \eta^0 \ra + \| \sigma \|}{2 k L_0}\right)^{\frac{1}{3}}.    
\end{equation} 
\end{theorem}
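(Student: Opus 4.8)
The plan is to run a telescoping ``sufficient-decrease'' argument on the scalar merit function $\Phi(x) := \max_{\xi \in C} \la f(x), \xi \ra$. By the active-set identity \eqref{f_minus_h_is_M_by_12} of Lemma \ref{aux3_18_Feb_2025} we have $\Phi(\bar x) - h_M(\bar x) \ge \tfrac{M}{12} r_M^3(\bar x)$, while Step \ref{aux2_18_Feb_2025} of Algorithm \ref{algo} guarantees $\Phi(x^{k+1}) \le h_{M_k}(x^k)$. Combining these and using $M_k \ge L_0$ would give the per-iteration decrease
\[\Phi(x^k) - \Phi(x^{k+1}) \ge \frac{M_k}{12} r^3_{M_k}(x^k) \ge \frac{L_0}{12} r^3_{M_k}(x^k).\]
The initial merit value $\Phi(x^0) = \max_{\xi \in C}\la f(x^0),\xi\ra$ is, in the active-index notation of \eqref{aux4_01_03_25}, recorded as $\la f(x^0),\eta^0\ra$, which is how the starting value enters the final estimate.

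First I would sum this inequality from $j=0$ to $k-1$, so the left-hand side telescopes to $\Phi(x^0) - \Phi(x^k)$. To control $-\Phi(x^k)$ uniformly I would use the lower bound $f(x) \succeq \sigma$: since every $\xi \in C$ lies on $\mathbb{S}^{p-1}$, we have $\la f(x),\xi\ra \ge \la \sigma,\xi\ra \ge -\|\sigma\|$, hence $\Phi(x^k) \ge -\|\sigma\|$ for all $k$. This yields $\tfrac{L_0}{12}\sum_{j=0}^{k-1} r^3_{M_j}(x^j) \le \Phi(x^0) + \|\sigma\|$, and letting $k \to \infty$ produces the claimed series bound, i.e. the first assertion.

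For $\lim_{j\to\infty}\mu_L(x^j)=0$, the summability just obtained forces $r_{M_j}(x^j)\to 0$, so Lemma \ref{mu_M_le_r_M} gives $\mu_{M_j}(x^{j+1}) \le r_{M_j}(x^j) \to 0$. I would then transfer this to $\mu_L$ using $M_j \in [L_0,2L]$: the two coefficients $\sqrt{2/(M_j+L)}$ and $2/(M_j+2pL)$ appearing in \eqref{mu_M_definition} are pinched between fixed positive constants, so $\mu_{M_j}(x^{j+1})\to 0$ forces both $\min_{\xi\in C}\|\la Jf(x^{j+1}),\xi\ra\| \to 0$ and $\limsup_{j}\big(-\max_{\xi\in C}\lambda_1(\la\nabla^2 f(x^{j+1}),\xi\ra)\big) \le 0$. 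Feeding these two limits into the $M=L$ instance of the formula for $\mu_L$ gives $\mu_L(x^{j+1})\to 0$, which is the second assertion.

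The rate \eqref{aux7_23_02} is where constants must be tracked carefully, and I expect this to be the main obstacle. From the finite-$k$ telescoped bound, $\min_{0\le j\le k-1} r^3_{M_j}(x^j) \le \tfrac1k \sum_{j=0}^{k-1} r^3_{M_j}(x^j) \le \tfrac{12(\Phi(x^0)+\|\sigma\|)}{kL_0}$. It then remains to dominate $\mu_L(x^{j+1})$ by a fixed multiple of $r_{M_j}(x^j)$: rewriting the two defining terms of $\mu_L$ via the estimate of Lemma \ref{upper_bound_of_J} and via \eqref{aux8_18_Feb}, and using $M_j \le 2L$ to replace the $M_j$-weighted coefficients by $L$-weighted ones, yields $\mu_L(x^{j+1}) \le C\,r_{M_j}(x^j)$ with an explicit constant $C$. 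Taking the minimum over $1\le j\le k$, cubing, and combining with the averaged bound delivers \eqref{aux7_23_02}. The delicate points are precisely this coefficient comparison between the true Lipschitz constant $L$ and the running parameters $M_j\in[L_0,2L]$, together with the bookkeeping needed to reach the exact constants displayed in \eqref{aux7_23_02}.
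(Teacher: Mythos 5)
Your proposal follows essentially the same route as the paper: the per-iteration decrease $\max_{\xi\in C}\la f(x^j),\xi\ra - h_{M_j}(x^j) \ge \tfrac{M_j}{12}r^3_{M_j}(x^j)$ from Lemma \ref{aux3_18_Feb_2025} combined with Step \ref{aux2_18_Feb_2025}, telescoped and bounded below via $\sigma$, and then Lemma \ref{mu_M_le_r_M} together with the coefficient comparison between $M_j\in[L_0,2L]$ and $L$, which the paper makes explicit as $\mu_L(x^{j+1})\le\tfrac{4}{3}r_{M_j}(x^j)$ and which produces the factor $\tfrac{8}{3}$ in \eqref{aux7_23_02}. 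The only cosmetic difference is that you telescope $\max_{\xi\in C}\la f(x^j),\xi\ra$ directly where the paper telescopes $\la f(x^j),\eta^j\ra$; both pass through the same intermediate inequality.
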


\begin{proof}
Notice by Step \ref{aux2_18_Feb_2025} of Algorithm \ref{algo} that 
\begin{align}\label{aux4_18_Feb_2025} 
\la f(x^j), \eta^j \ra - \la f(x^j + d_{M_j}(x^j)), \eta^{j + 1} \ra  
\ge ~&~ \la f(x^j), \eta^j \ra - \max_{\xi \in C} \la f(x^j + d_{M_j}(x^j)), \xi \ra \notag \\ 
\ge ~&~ \max_{\xi \in C} \la f(x^j), \xi \ra - h_{M_j}(x^j) \ge \frac{M_j}{12} r_{M_j}^3 (x^j), 
\end{align}
where the last inequality is followed from Lemma \ref{aux3_18_Feb_2025}. As $M_j \in [L_0, 2L]$, the equation \eqref{aux4_18_Feb_2025} yields 
\begin{align}
& \sum_{j = 0}^{k - 1} \left( \la f(x^j), \eta^j \ra - \la f(x^j + d_{M_j}(x^j)), \eta^{j + 1} \ra \right) \ge \frac{L_0}{12} \sum_{j = 0}^{k - 1} r_{M_j}^3 (x^j) \notag \\
\text{i.e., } & 
\sum_{j = 0}^{k - 1} r^3_{M_j} (x^j) 
\le \frac{12}{L_0} \left( \la f(x^0), \eta^0 \ra - \la f(x^k), \eta^{k} \ra \right)     
\le \frac{12}{L_0} \left( \la f(x^0), \eta^0 \ra - \la \sigma, \eta^k \ra \right) \label{aux5_23_02} \\ 
\text{i.e., } & 
k \left( \min_{0 \le j \le k-1 } r_{M_j}^3(x^j) \right) \le \sum_{j = 0}^{k - 1} r^3_{M_j} (x^j) \le \frac{12}{L_0} \left( \la f(x^0), \eta^0 \ra + \|\sigma \| \right). \notag 
\end{align}
Therefore, 
\begin{equation}\label{aux9_18_Feb} 
\min_{0 \le j \le k-1 } r_{M_j}^3(x^j) \le \frac{12}{k L_0} \left( \la f(x^0), \eta^0 \ra + \|\sigma \| \right). 
\end{equation}

\medskip
\noindent
As for any non-critical point $x^{j+1}$, we have $\max_{\xi \in C} \lambda_1 \left(\la \nabla^2 f(x^{j+1}), \xi \ra \right) \le 0$ and $L_0 \le M_{j+1} \le 2L$, from Lemma \ref{mu_M_le_r_M}, we obtain  
\begin{align}
r_{M_j}(x^j) ~\ge~ & \mu_{M_{j+1}}(x^{j+1}) \notag \\ 
~\overset{\eqref{mu_M_definition}}{=} & \max\left\{ \sqrt{\frac{2}{M_{j+1} + L} \min_{\xi \in C} \|\la Jf(x^{j+1}), \xi\ra \|}, - \frac{2}{M_{j+1} + 2pL} \max_{\xi \in C} \lambda_1 \left(\la \nabla^2 f(x^{j+1}), \xi \ra \right)  \right\} \notag \\ 
~\ge~ & \max \left\{ \sqrt{\frac{2}{3L} \min_{\xi \in C} \|\la Jf(x^{j+1}), \xi\ra \|}, - \frac{1}{(p+1)L} \max_{\xi \in C} \lambda_1 \left(\la \nabla^2 f(x^{j+1}), \xi \ra \right)  \right\} \notag \\ 
~\ge~ & \frac{3}{4} ~ \mu_{L} (x^{j + 1})\label{aux6_23_02} 
\end{align}
since $\frac{2p+1}{2(p+1)} \ge \frac{3}{4}$ for all $p \ge 1$. 
Hence, with the help of \eqref{aux5_23_02}, we obtain $\lim_{j \to \infty} \mu_L(x^j) = 0$ because 
\begin{align*}
0 \le \lim_{j \to \infty} \mu_L(x^j) \le \lim_{j \to \infty} \frac{4}{3} r_{M_j}(x^j) = 0.    
\end{align*}

\noindent
Further, from \eqref{aux9_18_Feb} and \eqref{aux6_23_02}, we get 
\begin{align*} 
& \min_{1 \le j \le k} \mu^3_L(x^j) \le \frac{64}{27} \min_{0 \le j \le k-1} r^3_{M_j}(x^j) \le \frac{256}{9} \frac{1}{k L_0} \left( \la f(x^0), \eta^0 \ra + \|\sigma\| \right),   \\ 
\text{i.e., } &  \min_{1 \le j \le k} \mu_L(x^j) \le \frac{8}{3} \left(\frac{3 \la f(x^0), \eta^0 \ra + \| \sigma \|}{2 k L_0}\right)^{\frac{1}{3}}.  
\end{align*} 

\end{proof}

\begin{remark}
From \eqref{aux7_23_02}, we see that the smallest value among  $\mu_L(x^1), \mu_L(x^1), \ldots, \mu_L(x^k)$ does not exceed  
$ \beta k^{-\frac{1}{3}} $, where $\beta := \frac{8}{3} \left(\frac{3 \la f(x^0), \eta^0 \ra + \| \sigma \|}{2 k L_0}\right)^{\frac{1}{3}}$. This indicates that at the end of $k$-th iteration of Algorithm \ref{algo}, at least one point of $x^0$, $x^1$, \ldots, $x^k$ has $\mu_L$-value smaller than $O\left(k^{-\frac{1}{3}}\right)$; precisely, 
\[\mu_L(x^j) \le O\left(k^{-\frac{1}{3}}\right) \text{ for some } j \in \{1, 2, \ldots, k\}. \] 
This yields, with the help of Lemma \ref{mu_M_le_r_M}, that for some $j \in \{1, 2, \ldots, k\}$, 
\[\min_{\xi \in C} \| \la Jf(x^j), \xi \ra \| \le O\left(k^{-\frac{2}{3}}\right).\]
Thus, the proposed Algorithm \ref{algo} has global convergence property (since $\lim_{k \to \infty} \mu_L(x^k)  = 0$) and the rate of convergence to reach (a stationary point, i.e.,) $\la Jf(x), \bar \xi \ra = 0$ for some $\bar \xi \in C$ is $O\left(k^{-{2}/{3}}\right)$. \\ 
\end{remark}

\begin{theorem}\label{x_k_conv_to_weak_min}
Let $L_0 \ge \frac{2L}{3}$ and $\{x^k\}$ be a sequence generated by Algorithm \ref{algo}. If $\mathcal{L}(f(x^\ell))$ is bounded for some $\ell \in \mathbb{N}$, then the following results hold. 
\begin{enumerate}[(i)]
\item 
The sequence $\{f(x^k)\}$ is monotonic decreasing and convergent.  

\item 
The set $X$ of all subsequential limits of $\{x^k\}$ is non-empty, and $f(x) = \Gamma$ for all $x \in X$, where $\Gamma := \lim_{k \to \infty} f(x^k)$. 

\item \label{third_th_3_5}
If $\bar x$ is a subsequential limit of $\{x^k\}$, then there exists $\bar \xi \in C$ such that $\la Jf(\bar x), \bar \xi\ra = 0$ and $\lambda_1\left(\la\nabla^2 f(\bar x), \bar \xi \ra\right) \ge 0$. 
\end{enumerate}
\end{theorem}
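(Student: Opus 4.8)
The plan is to handle the three parts in order, invoking $L_0 \ge \tfrac{2L}{3}$ only for the monotonicity in part (i) and the boundedness of $\mathcal{L}(f(x^\ell))$ for parts (ii) and (iii).

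For part (i) I would first prove the cone-monotonicity $f(x^{k+1}) \preceq f(x^k)$. Fixing $\xi \in C$ and applying \eqref{cube_equation} with $x = x^k$ and $y = x^{k+1} = x^k + d_{M_k}(x^k)$ gives
\[ \la f(x^{k+1}) - f(x^k), \xi \ra \le \la Jf(x^k) d_{M_k}(x^k) + \tfrac{1}{2} d_{M_k}(x^k)^\top \nabla^2 f(x^k) d_{M_k}(x^k), \xi \ra + \tfrac{L}{6} r_{M_k}^3(x^k). \]
By the definition of $\beta_{M_k}$ in \eqref{beta_function}, the bracketed term is at most $\beta_{M_k}(x^k) - \tfrac{M_k}{6} r_{M_k}^3(x^k)$, while \eqref{aux2_01_03_25} supplies the sharper bound $\beta_{M_k}(x^k) \le -\tfrac{M_k}{12} r_{M_k}^3(x^k)$. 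Combining these and using $M_k \ge L_0 \ge \tfrac{2L}{3}$ collapses the right-hand side to $\tfrac{2L - 3M_k}{12}\, r_{M_k}^3(x^k) \le 0$; since $\xi \in C$ is arbitrary, $f(x^k) - f(x^{k+1}) \in K$. For convergence, monotonicity forces $x^k \in \mathcal{L}(f(x^\ell))$ for $k \ge \ell$, a compact set, so $\{f(x^k)\}$ is bounded; for each $\xi \in K^*$ the scalar sequence $\{\la f(x^k), \xi \ra\}$ is then non-increasing and bounded below, hence convergent, and reading this along a basis of $\mathbb{R}^p$ drawn from $\mathrm{int}(K^*)$ (non-empty because $K$ is pointed) shows that $\{f(x^k)\}$ converges.

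Part (ii) is then immediate: $\{x^k\}_{k \ge \ell}$ lies in the compact level set, so it possesses an accumulation point and $X \neq \emptyset$; if $x^{k_j} \to \bar x$, continuity of $f$ gives $f(\bar x) = \lim_j f(x^{k_j}) = \Gamma$. For part (iii) the first task is $r_{M_j}(x^j) \to 0$. Summing the per-step decrease estimate \eqref{aux4_18_Feb_2025} from $\ell$ to $k-1$ telescopes to $\la f(x^\ell), \eta^\ell \ra - \la f(x^k), \eta^k \ra \ge \tfrac{L_0}{12} \sum_{j=\ell}^{k-1} r_{M_j}^3(x^j)$; since $\{f(x^k)\}$ remains in a compact set and each $\eta^k \in \mathrm{conv}(C)$ is bounded, the left side is bounded, forcing $\sum_j r_{M_j}^3(x^j) < \infty$ and hence $r_{M_j}(x^j) \to 0$. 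Next I would fix a subsequence $x^{k_j} \to \bar x$; because $\|x^{k_j+1} - x^{k_j}\| = r_{M_{k_j}}(x^{k_j}) \to 0$, also $x^{k_j+1} \to \bar x$. Lemma \ref{upper_bound_of_J} gives $\min_{\xi \in C} \|\la Jf(x^{k_j+1}), \xi \ra\| \le \tfrac{L + M_{k_j}}{2} r_{M_{k_j}}^2(x^{k_j}) \to 0$; choosing minimizers $\xi^{(j)} \in C$ and passing, by compactness of $C$, to a convergent subsequence $\xi^{(j)} \to \bar\xi \in C$ yields $\la Jf(\bar x), \bar\xi \ra = 0$. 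For the curvature condition I would use inequality \eqref{aux1_23_02}, namely $\la \nabla^2 f(x^{k_j+1}), \xi \ra \succeq -\big(pL + \tfrac{M_{k_j}}{2}\big) r_{M_{k_j}}(x^{k_j}) I_n$ for every $\xi \in C$; letting $j \to \infty$ and using continuity of $\nabla^2 f$ and of the eigenvalues gives $\lambda_1(\la \nabla^2 f(\bar x), \xi \ra) \ge 0$ for all $\xi \in C$, in particular for $\bar\xi$.

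I expect the delicate step to be part (iii). The gradient condition produces only one distinguished $\bar\xi \in C$, and one might worry it need not be the $\xi$ that controls the smallest Hessian eigenvalue; the resolution is that \eqref{aux1_23_02} is \emph{uniform} over $\xi \in C$, so the curvature bound holds at $\bar\xi$ automatically. The genuine care is in the simultaneous extraction of subsequences (for $x^{k_j}$, for $M_{k_j}$, and for the minimizing $\xi^{(j)}$) and, above all, in re-establishing $r_{M_j}(x^j) \to 0$ under the weaker hypothesis of a bounded level set rather than the global lower bound $f \succeq \sigma$ assumed in Theorem \ref{aux2_28_01_25}; here the compactness of $\mathcal{L}(f(x^\ell))$ combined with the monotonicity from part (i) is exactly what makes the telescoped sum bounded.
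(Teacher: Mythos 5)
Your proof is correct and follows the same skeleton as the paper's: the descent estimate for (i) combines \eqref{cube_equation} with \eqref{aux2_01_03_25} exactly as the paper does, (ii) is the same compactness-plus-continuity argument, and (iii) rests on Lemma \ref{upper_bound_of_J} for the gradient condition and the uniform bound \eqref{aux1_23_02} for the curvature condition. Two places where you deviate are actually improvements. First, for the convergence of $\{f(x^k)\}$ you read the monotone scalar sequences $\la f(x^k), \xi \ra$ along a basis of $\mathbb{R}^p$ drawn from $\mathrm{int}(K^*)$; the paper instead fixes a single $\xi \in C$ and passes from $\la \xi, f(x^k) - \xi \bar f_\xi \ra \to 0$ to $\|f(x^k) - \xi \bar f_\xi\| \to 0$, a step that does not follow as written, so your version is the cleaner one. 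Second, in (iii) the paper obtains $r_{M_{k_j}}(x^{k_j}) \to 0$ by citing Theorem \ref{aux2_28_01_25}, whose hypothesis $f(x) \succeq \sigma$ for all $x$ is not among the assumptions of the present theorem; your re-derivation of the summability of $\sum_j r_{M_j}^3(x^j)$ by telescoping \eqref{aux4_18_Feb_2025} over the compact level set (using monotonicity from part (i) and boundedness of the $\eta^j$) closes that gap and is the right way to make part (iii) self-contained. Your explicit handling of the index shift from $x^{k_j+1}$ to $x^{k_j}$ and of the extraction of the minimizing $\xi^{(j)}$ is likewise more careful than the paper's, though with $C$ finite both are routine.
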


\begin{proof}
\begin{enumerate}[(i)]
\item 
We get from \eqref{cube_equation} with $x = x^k$ and $y = x^k + d_{M_k}(x^k)$ that  for all $\xi \in C$, 
\begin{align*} 
& \la f(x^{k + 1}), \xi \ra - \la f(x^k), \xi \ra  \\ 
\le & \la Jf(x^k) d_{M_k}(x^k) + \frac{1}{2} d_{M_k}(x^k)^\top \nabla^2 f(x^k) d_{M_k}(x^k), \xi \ra + \frac{L}{6} r^3_{M_k}(x^k)\\ 
= &  \max_{\xi \in C} \la Jf(x^k) d_{M_k}(x^k) + \frac{1}{2} d_{M_k}(x^k)^\top \nabla^2 f(x^k) d_{M_k}(x^k), \xi \ra + \frac{L}{6} r^3_{M_k}(x^k) \\ 
\le & \la Jf(x^k) d_M(x^k) + \frac{1}{2} \la \nabla^2 f(x^k) d_M(x^k), d_M(x^k) \ra, \eta^i(x^k) \ra  + \frac{L}{6} r_{M_k}^3(x^k) \\ 
& \text{ for all } \eta^i(x^k) \in \mathscr{A}(d_{M_k}(x^k)) \notag \\
\le & - \left(\frac{M}{4} - \frac{L}{6} \right) r^3_{M_k}, 
\end{align*}
which is followed from \eqref{aux2_01_03_25} with $\bar x = x^k$ and $M = M_k$. Therefore, from $M_k \ge L_0 \ge \frac{2L}{3}$, we get 
\begin{align*}
~&~ \min_{ \xi \in C} \la f(x^{k+1}), \xi \ra \le \min_{ \xi \in C} \la f(x^{k}), \xi \ra \text{ for all } \xi \in C, \\ 
\text{ i.e., } ~&~ f(x^{k + 1}) \preceq f(x^k), 
\end{align*}
which implies that the sequence $\{f(x^k)\}$ is monotonic decreasing. \\

\noindent
As $f(x^{k + 1}) \prec f(x^k)$ for all $k \in \mathbb{N}$, the elements in the sequence $\{x^k\}$ after the $\ell$-th term lies in the set $\mathcal{L}(f(x^\ell)) := \{x \in \mathbb{R}^n: f(x) \prec f(x^\ell)\}$.  As $\mathcal{L}(f(x^\ell))$ is bounded, the sequence 
$\{x^k\}$ is bounded. That is, there exists $\alpha > 0$ such that 
\begin{equation} \label{aux1_28_01_25}
\|x^k\| \le \alpha \mbox{ for all } k \in \mathbb{N}. 
\end{equation}
As $f$ is a continuous function, the set $f(\{x \in \mathbb{R}^n: \|x\| \le \alpha\})$ is compact. Thus, there exists $\gamma > 0$ such that 
\begin{align*} 
~&~ \|f(x)\| \le \gamma \mbox{ for all } x \in S \mbox{ with } \|x\| \le \alpha \\ 
\overset{\eqref{aux1_28_01_25}}{\implies} ~&~ \|f(x^k)\| \le \gamma \mbox{ for all } k \in \mathbb{N}.  
\end{align*}
Thus, for each $\xi \in C$, $|\la f(x^k), \xi \ra| \le \|f(x^k)\| \|\xi\| \le \gamma$  for all  $k \in \mathbb{N}$. 
Hence, for each $\xi \in C$, the sequence $\la f(x^k), \xi \ra$ is convergent. \\ 

\noindent
For any given $\xi \in C$, denote $\bar{f}_\xi := \lim_{k \to \infty} \la f(x^k), \xi \ra$. Then, 
\begin{align*}
~&~ \lim_{k \to \infty} \la \xi, f(x^k) - \xi \bar{f}_\xi \ra = 0\\ 
\implies ~&~ \lim_{k \to \infty} \| f(x^k) - \xi \bar{f}_\xi \| = 0 \\ 
\implies ~&~ \left\| \lim_{k \to \infty} \left(f(x^k) - \xi \bar{f}_\xi \right) \right\| = 0, 
\end{align*}
i.e., $\{f(x^k)\}$ is convergent. \\ 

\item 
It is directly followed from \eqref{aux1_28_01_25} that the set $X$ of all subsequential limits of $\{x^k\}$ is non-empty. \\ 

\noindent
Let $x', x'' \in X$. Then, there exist two subsequences 
$\{x^{n'_k}\}$ and $\{x^{n''_k}\}$ of $\{x^k\}$ which converge to $x'$ and $x''$, respectively. \\ 

\noindent
As $f$ is a continuous function and $\lim_{k \to \infty} f(x^k) = \Gamma$, we have 
\begin{align*}
~&~ \lim_{k \to \infty} f(x^{n'_k}) =  \lim_{k \to \infty} f(x^{k}) = \lim_{k \to \infty} f(x^{n''_k}) \\ 
\mbox{i.e., } ~&~ f(x') = \Gamma = f(x''), 
\end{align*}
which proves that $f(x) = \Gamma$ for all $x \in X$. \\

\item 
Let $\{x^{k_j}\}$ be the subsequence of $\{x^k\}$ that converge to $\bar x$. Then, from Theorem \ref{aux2_28_01_25}, we obtain $\lim_{j \to \infty} r_{M_{k_j}}(x^{k_j}) = 0$. So, we get from Lemma \ref{upper_bound_of_J} that $\lim_{j \to \infty} \min_{\xi \in C} \| \la J f(x^{k_j}), \xi \ra \| =  0$. That is, 
$\min_{\xi \in C} \la J f(\bar x), \bar \xi \ra = 0$.  \\

\noindent
As $C$ is compact and the mapping $\xi \mapsto \lambda_1(\la \nabla^2 f(\bar x), \xi \ra)$ is continuous, there exists $\bar \xi \in C$ such that 
\[ \la Jf(\bar x), \bar \xi \ra = \min_{\xi \in C} \la J f(\bar x), \bar \xi \ra = 0. \]

\noindent
Since $\{M_{k_j}\}$ is bounded, we get from $\lim_{j \to \infty} r_{M_j}(x^j) = 0$ and \eqref{aux1_23_02} that for all $\xi \in C$, 
\[ \lambda_1 \left(\la \nabla^2 f(\bar x), \xi \ra \right) = \lim_{j \to \infty} \lambda_1 \left( \la \nabla^2 f(x^{k_j}), \xi \ra \right) \ge - \lim_{j \to \infty} \left\{ \left( L + \frac{M_{k_j}}{2} \right) r_{M_{kj}}(x^{k_j}) \right\} = 0.  \]

\noindent
Therefore, in particular, choosing $\xi$ as $\bar \xi$, we get $\lambda_1(\la \nabla^2 f(\bar x), \bar \xi \ra) \ge 0$. Hence, the result follows. 

\end{enumerate} 

\end{proof}

\begin{remark}
Note from Theorem  \ref{x_k_conv_to_weak_min} (\ref{third_th_3_5}) that any limit point of any sequence generated by Algorithm \ref{algo} satisfies first order and second order necessary conditions of a minimum point of $\la f(x), \bar \xi \ra$ for some $\bar \xi \in C$. As from Lemma \ref{for_one_xi_is_weak_min}, a minimum point of $\la f(x), \bar \xi \ra$ is a weakly efficient point of \eqref{vop}, it is likely that subsequential limits of a sequence $\{x^k\}$, generated by Algorithm \ref{algo}, are weakly efficient points. However, such weakly efficient points ($\bar x$'s) may be ``degenerate" in the sense that at least one eigenvalue of $\la \nabla^2 f(\bar x), \bar \xi \ra$ is zero.  \\ 
\end{remark}

Next, we analyze the behavior of a sequence generated by Algorithm \ref{algo} around a local maximum of $\la f(x), \xi \ra$. \\ 
 
\begin{theorem}\label{theorem_max_is_skipped}
Let $ \bar x \in \mathrm{int} (S)$ be such a point that $\la Jf(\bar x), \xi \ra = 0$ and $\lambda_n\left( \la \nabla^2 f(\bar x), \xi \ra \right) < 0$ for all $\xi \in C$. Suppose that $\{x^k\}$ is a sequence generated by Algorithm \ref{algo}. Then, there exists $\varepsilon_1 > 0$ and $\varepsilon_2 > 0$ such that whenever $x^j$ satisfies 
\[\|x^j - \bar x\| < \varepsilon_1 \mbox{ and } f(x^j) \succeq f(\bar x),    \]
the next point $x^{j + 1}$ holds 
\[\max_{\xi \in C} \la f(x^{j + 1}), \xi \ra - \max_{\xi \in C} \la f(\bar x), \xi \ra \le - \varepsilon_2. \] 
\end{theorem}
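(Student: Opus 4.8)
The plan is to exploit the mechanism by which cubic regularization ``jumps over'' a strict local maximizer: near $\bar x$ the matrices $\la \nabla^2 f(x),\xi \ra$ are uniformly negative definite, so the positive definiteness guaranteed by Corollary \ref{invertable_for_all_xi} forces the regularization term $\frac{M_j}{2}r_{M_j}(x^j)I_n$ to dominate, and hence forces the step length $r_{M_j}(x^j)$ to be bounded below away from zero. A uniformly long step then produces, through the per-iteration decrease \eqref{f_minus_h_is_M_by_12} together with Step \ref{aux2_18_Feb_2025} of Algorithm \ref{algo}, a drop in $\max_{\xi\in C}\la f(\cdot),\xi \ra$ that strictly beats the (controllably small) gap between $x^j$ and $\bar x$.

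First I would fix the curvature data. Since $C$ is compact and $\xi \mapsto \lambda_n(\la \nabla^2 f(\bar x),\xi \ra)$ is continuous and strictly negative on $C$, there is $\delta>0$ with $\lambda_n(\la \nabla^2 f(\bar x),\xi \ra) \le -2\delta$ for all $\xi \in C$. Because $(x,\xi) \mapsto \lambda_n(\la \nabla^2 f(x),\xi \ra)$ is jointly continuous, a routine continuity-plus-compactness argument lets me pick $\varepsilon_1>0$, small enough that the ball $\|x-\bar x\|<\varepsilon_1$ lies in $\mathrm{int}(S)$, so that $\lambda_n(\la \nabla^2 f(x^j),\xi \ra) \le -\delta$ for every $\xi \in C$ whenever $\|x^j-\bar x\|<\varepsilon_1$.

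The crux of the argument is the lower bound on $r_{M_j}(x^j)$, and I expect it to be the main obstacle since it is precisely where the strict negative curvature and the cubic regularization interact. Fixing any $\xi \in C$, Corollary \ref{invertable_for_all_xi} tells us $\la \nabla^2 f(x^j),\xi \ra + \frac{M_j}{2}r_{M_j}(x^j)I_n$ is positive definite, so its smallest eigenvalue is positive; combined with $\lambda_1(\la \nabla^2 f(x^j),\xi \ra) \le \lambda_n(\la \nabla^2 f(x^j),\xi \ra) \le -\delta$, this yields $\frac{M_j}{2}r_{M_j}(x^j) > \delta$. Using $M_j \le 2L$ then gives $r_{M_j}(x^j) > \delta/L =: \rho > 0$, a lower bound independent of $j$. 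Everything else is bookkeeping.

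Finally I would assemble the decrease. By Step \ref{aux2_18_Feb_2025} of Algorithm \ref{algo} and \eqref{f_minus_h_is_M_by_12}, using $M_j \ge L_0$ and $r_{M_j}(x^j) > \rho$,
\[\max_{\xi\in C}\la f(x^{j+1}),\xi \ra \le h_{M_j}(x^j) \le \max_{\xi\in C}\la f(x^j),\xi \ra - \frac{M_j}{12}r^3_{M_j}(x^j) \le \max_{\xi\in C}\la f(x^j),\xi \ra - \frac{L_0}{12}\rho^3.\]
Since $g(x):=\max_{\xi\in C}\la f(x),\xi \ra$ is continuous (a maximum of continuous functions over the compact set $C$), after possibly shrinking $\varepsilon_1$ I can guarantee $g(x^j)-g(\bar x) < \frac{L_0}{24}\rho^3$ whenever $\|x^j-\bar x\|<\varepsilon_1$; here the hypothesis $f(x^j) \succeq f(\bar x)$ only tells us this gap is nonnegative, so it merely pins down the regime the estimate must cover. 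Adding the two inequalities gives $g(x^{j+1})-g(\bar x) \le -\frac{L_0}{12}\rho^3 + \frac{L_0}{24}\rho^3 = -\frac{L_0}{24}\rho^3$, so the claim holds with $\varepsilon_2 := \frac{L_0}{24}(\delta/L)^3$.
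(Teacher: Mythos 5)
Your proof is correct, but it takes a genuinely different route from the paper's. The paper never lower-bounds the step length: it bounds $h_{M_j}(x^j)$ from above via \eqref{h_M_is_less}, probing the model with a trial point $y=\bar x+t\bar u$ and using a second-order Taylor expansion \eqref{cube_equation} so that the negative curvature term $\tfrac{\Lambda t^2}{2}$ (with $\Lambda:=\max_{\xi\in C}\lambda_n(\la\nabla^2 f(\bar x),\xi\ra)<0$) dominates the cubic error terms for a suitably small $t$; the decrease $\varepsilon_2=-\Lambda\varepsilon'^2/4$ then comes out of balancing $t$ against $-\Lambda/L$. You instead extract the mechanism more directly: the second-order certificate of Corollary \ref{invertable_for_all_xi}, $\la\nabla^2 f(x^j),\xi\ra+\tfrac{M_j}{2}r_{M_j}(x^j)I_n\succ 0$, collides with the uniform bound $\lambda_1(\la\nabla^2 f(x^j),\xi\ra)\le-\delta$ near $\bar x$ to force $r_{M_j}(x^j)\ge\delta/L$, and then the per-iteration decrease \eqref{f_minus_h_is_M_by_12} together with Step \ref{aux2_18_Feb_2025} converts the long step into a uniform drop $\tfrac{L_0}{12}(\delta/L)^3$. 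Both arguments are sound (your quantifier order is fine: $\delta$, hence $\rho$, is fixed before the final shrinking of $\varepsilon_1$, and even if Corollary \ref{invertable_for_all_xi} were weakened to positive semidefiniteness your bound survives as $r_{M_j}(x^j)\ge\delta/L$). Your version is arguably more informative---it exhibits the quantitative fact that the method necessarily takes a step of length at least $\delta/L$ near a strict local maximizer---while the paper's version avoids relying on Corollary \ref{invertable_for_all_xi} and works purely with the model upper bound $h_{M_j}$. Like the paper's proof, yours does not actually use the hypothesis $f(x^j)\succeq f(\bar x)$ beyond noting the gap it controls, which is harmless since you prove a slightly stronger statement.
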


\begin{proof}
Let $\Lambda := \max_{\xi \in C} \lambda_n\left( \la \nabla^2 f(\bar x), \xi \ra \right)$. Then, by Rayleigh inequality, for any $u \in \mathbb{S}^{p - 1}$, we have for all $\xi \in C$ that 
\begin{align} \label{aux1_29_01}
u^\top \la  \nabla^2 f(\bar x), \xi \ra u \le \lambda_n \left( \la \nabla^2 f(\bar x), \xi \ra \right) \le \Lambda < 0. 
\end{align}
As $\bar x \in \mathrm{int} (S)$, for any $\bar u \in \mathbb{R}^n$, there exists $\bar t \in \left(0, -\frac{3 \Lambda}{26L}\right]$ such that $\bar x + \bar t \bar u \in S$ for all $t \in (-\bar t, \bar t)$. \\ 

\noindent
Fix an $\bar u \in \mathbb{R}^n$. From Step 2 of Algorithm \ref{algo}, we get for any $\eta \in C$ and $-\bar t < t < \bar t$ that 
\allowdisplaybreaks
\begin{align*} 
~&~ \la f(x^{j + 1}), \eta \ra \\
\le ~&~ h_{M_j} (x^j) \\      
\le ~&~ \min_{y \in S} \left( \max_{\xi \in C} \la f(y), \xi \ra + \frac{L + M_j}{6} \| y - x^j \|^3 \right), \mbox{ by } \eqref{h_M_is_less} \\ 
\le ~&~ \max_{\xi \in C} \la f(\bar x + t \bar u), \xi \ra + \frac{L + M_j}{6} \| (\bar x + t \bar u) - x^j \|^3 \\ 
\le ~&~ \max_{\xi \in C} \la f(\bar x + t \bar u), \xi \ra + \frac{L}{2} \| (\bar x + t \bar u) - x^j \|^3 \mbox{ since } M_j \le 2L \\
\le ~&~ \max_{\xi \in C} \la f(\bar x + t \bar u), \xi \ra + \frac{L}{2} \left( \|\bar x - x^j\|^2 + 2 t \la \bar x - x^j, \bar u \ra + t^2 \right)^{\frac{3}{2}} \\ 
\le ~&~ \max_{\xi \in C} \la f(\bar x) + t Jf(\bar x) \bar u + \frac{t^2}{2} {\bar u}^\top \nabla^2 f(\bar x) \bar u, ~\xi \ra + \frac{L}{6} |t|^3 
+ \frac{L}{2} \left( \varepsilon_1^2 + 2 t \la \bar x - x^j, \bar u \ra + t^2 \right)^{\frac{3}{2}}, 
\mbox{ by } \eqref{cube_equation} \\
\le ~&~ \max_{\xi \in C} \la f(\bar x), \xi \ra + \frac{\Lambda t^2}{2} + \frac{L}{6} |t|^3 
+ \frac{L}{2} \left( \varepsilon_1^2 + 2 t \la \bar x - x^j, \bar u \ra + t^2 \right)^{\frac{3}{2}}, 
\mbox{ by } \eqref{aux1_29_01}. 
\end{align*}
Since $t$ can be chosen positive or negative, we have 
\[\max_{\xi \in C} \la f(x^{j + 1}), \xi \ra - \max_{\xi \in C} \la f(\bar x), \xi \ra 
\le 
\frac{\Lambda {\bar t}^2}{2} + \frac{L {\bar t}^3}{6}  
+ \frac{L}{2} \left( \varepsilon_1^2 + {\bar t}^2 \right)^{\frac{3}{2}}.  
\]
Taking $\varepsilon' := \min \{\bar t, \varepsilon_1\}$, we obtain 
\allowdisplaybreaks
\begin{align*}
 ~&~ \max_{\xi \in C} \la f(x^{j + 1}), \xi \ra - \max_{\xi \in C} \la f(\bar x), \xi \ra  \\ 
\le ~&~ \frac{\Lambda {\varepsilon'}^2}{2} + \frac{L {\varepsilon'}^3}{6} 
+ \sqrt{2} L \varepsilon'^3  
< \frac{\Lambda {\varepsilon'}^2}{2} + \frac{13 L {\varepsilon'}^3}{6} \\ 
< ~&~ \frac{\Lambda {\varepsilon'}^2}{2} + \frac{13 L {\varepsilon'}^2}{6} \left( - \frac{3 \Lambda}{26 L}\right) \mbox{ since } \varepsilon' \le \bar t <   - \frac{3 \Lambda}{26 L} \\ 
= ~&~ \frac{\Lambda {\varepsilon'}^2}{4}.  
\end{align*}
Taking $\varepsilon_2 : = - \frac{\Lambda {\varepsilon'}^2}{4} > 0$, the result follows. 
\end{proof}

\medskip
\begin{remark} \label{non-weakly-effieient-remark}
The result in Theorem \ref{theorem_max_is_skipped} implies that $\max_{\xi \in C} \la f(x^{j + 1}), \xi \ra < \max_{\xi \in C} \la f(\bar x), \xi \ra$, which is a necessary condition of $f(x^{j + 1}) \prec f(\bar x)$. Hence, whenever $x^j$ comes inside the following $\varepsilon_1$-neighborhood of a local maximum of $\la f(x), \xi \ra$ for all $\xi \in C$: 
\[N_{\varepsilon_1} (\bar x):= \left\{ x \in \mathbb{R}^n: \|x - \bar x\| < \varepsilon_1,~ f(x) \succeq f(\bar x) \right\}, \]
immediately the next iterate $x^{j + 1}$ leaves the neighborhood $N_{\varepsilon_1} (\bar x)$. Thus, none of the subsequential limits of the sequence $\{x^k\}$ generated by Algorithm \ref{algo} is a 
non-weakly efficient point. \\ 
\end{remark}

\begin{theorem}\label{theorem_3_7}
Let the initial point $x^0$ be such that $\min_{\xi \in C} \lambda_1(\la \nabla^2 f(x^0), \xi \ra) > 0$, and $\{x^k\}$ be the corresponding sequence generated by Algorithm \ref{algo}. Suppose that $\eta^k := \eta(x^k)$ is the vector $\sum_{i = 1}^{s(\bar x)} \lambda_i(\bar x) \eta^i(\bar x)$ as defined in \eqref{10_01_25_aux2} when $\bar x = x^k$  and $M = M_k$. Let $\{x^k\}$ lie inside a compact set $B \subset S$. Then, the sequence $\{\gamma_k\}$ defined by 
\[\gamma_k : = \frac{L \left\| \la Jf(x^k), \eta^k \ra \right\|}{\left(\min_{\xi \in C} \lambda_1 \left(\la \nabla^2 f(x^k), \xi \ra \right)\right)^2} \]
is well-defined if $\gamma_0 \le \frac{1}{4}$, and there hold the following results. 
\medskip
\begin{enumerate}[(i)] 
\item The sequence $\{\gamma_k\}$ satisfies  
\begin{equation}\label{gamma_k_ineqality}
\gamma_{k + 1} \le \frac{3}{2} \left(\frac{\gamma_k}{1 - \gamma_k}\right)^2 \le \frac{8}{3} \gamma_k^2 \le \frac{2}{3} \gamma_k ~\text{ for all }~ k = 0, 1, 2, \ldots 
\end{equation}  
and $q$-quadratically converges to zero. 

\medskip
\item For any $k$, the minimum eigenvalue of $\la \nabla^2 f(x^k), \xi \ra$ satisfies 
\begin{equation}\label{hessian_eigen_bound} 
\frac{1}{e}  \le \frac{\min_{\xi \in C} \lambda_1 \left(\la \nabla^2 f(x^k), \xi \ra \right)}{\min_{\xi \in C} \lambda_1 \left(\la \nabla^2 f(x^0), \xi \ra \right)} \le e^{\frac{3}{4}}. 
\end{equation}

\item 
The sequence $\{x^k\}$ is q-quadratically convergent and it satisfies 
\[ \left\| \la Jf(x^k), \eta^k \ra \right\| 
\le \frac{3 e \sqrt{e}}{8L} \left(\min_{\xi \in C} \lambda_1 \left(\la \nabla^2 f(x^0), \xi \ra \right) \right)^2 \left(\frac{2}{3}\right)^{2^k}. \]
\end{enumerate}
\end{theorem}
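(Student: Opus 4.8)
The plan is to run the Nesterov--Polyak local-convergence template in the vector setting, tracking the three coupled scalars $g_k:=\la Jf(x^k),\eta^k\ra$, $\mu_k:=\min_{\xi\in C}\lambda_1(\la\nabla^2 f(x^k),\xi\ra)$ and $r_k:=r_{M_k}(x^k)$, and proving \eqref{gamma_k_ineqality} by induction on $k$ with the invariant $\gamma_k\le\tfrac14$. First I would record the step identity: from the optimality characterisation \eqref{10_01_25_aux1}--\eqref{10_01_25_aux2} at $\bar x=x^k$, the cubic step $d_k:=d_{M_k}(x^k)$ satisfies
\[
\left(\la\nabla^2 f(x^k),\eta^k\ra+\tfrac{M_k}{2}r_k I_n\right)d_k=-\,g_k .
\]
Because $\eta^k\in\mathrm{conv}(C)$ and the smallest eigenvalue is concave, $\lambda_1(\la\nabla^2 f(x^k),\eta^k\ra)\ge\mu_k$, so by Corollary \ref{invertable_for_all_xi} the coefficient matrix is positive definite with least eigenvalue at least $\mu_k$; inverting and taking norms yields the size bound $r_k\le\|g_k\|/\mu_k=\gamma_k\mu_k/L$, so $r_k$ is small exactly when $\gamma_k$ is.

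Next I would establish the two one-step estimates. For the gradient, Lemma \ref{upper_bound_of_J} together with $M_k\le 2L$ gives $\|g_{k+1}\|\le\tfrac12(L+M_k)r_k^2\le\tfrac{3L}{2}r_k^2$, which combined with the size bound yields $L\|g_{k+1}\|/\mu_k^2\le\tfrac32\gamma_k^2$. For the curvature, the Lipschitz estimate used in the proof of Lemma \ref{mu_M_le_r_M} controls $\|\la\nabla^2 f(x^{k+1})-\nabla^2 f(x^k),\xi\ra\|$ by a constant multiple of $Lr_k$ for every $\xi\in C$, so Weyl's inequality gives $|\mu_{k+1}-\mu_k|\le\gamma_k\mu_k$, i.e. $\mu_k(1-\gamma_k)\le\mu_{k+1}\le\mu_k(1+\gamma_k)$. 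Dividing the gradient recursion by $\mu_{k+1}^2$ produces the first inequality $\gamma_{k+1}\le\tfrac32\bigl(\gamma_k/(1-\gamma_k)\bigr)^2$; under $\gamma_k\le\tfrac14$ one has $(1-\gamma_k)^{-2}\le\tfrac{16}{9}$, giving $\gamma_{k+1}\le\tfrac83\gamma_k^2\le\tfrac23\gamma_k$, and in particular $\gamma_{k+1}\le\tfrac14$, which closes the induction. The induction also certifies $\mu_{k}>0$ for every $k$, so each $\gamma_k$ is well defined, and $\gamma_k\le(\tfrac23)^k\gamma_0\to0$ with the $\tfrac83\gamma_k^2$ bound giving $q$-quadratic decay; this is (i).

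For (ii) I would telescope $\mu_k/\mu_0=\prod_{j=0}^{k-1}(\mu_{j+1}/\mu_j)$: since $\gamma_j\le(\tfrac23)^j\gamma_0\le\tfrac14(\tfrac23)^j$ we get $\sum_j\gamma_j\le\tfrac34$ and $\sum_j\gamma_j/(1-\gamma_j)\le\tfrac43\sum_j\gamma_j\le1$, whence $\prod_j(1+\gamma_j)\le e^{\sum\gamma_j}\le e^{3/4}$ and $\prod_j(1-\gamma_j)\ge e^{-\sum\gamma_j/(1-\gamma_j)}\ge e^{-1}$, which is exactly \eqref{hessian_eigen_bound}. For (iii), writing $a_k:=\tfrac83\gamma_k$ turns $\gamma_{k+1}\le\tfrac83\gamma_k^2$ into $a_{k+1}\le a_k^2$, so $a_k\le a_0^{2^k}\le(\tfrac23)^{2^k}$ and $\gamma_k\le\tfrac38(\tfrac23)^{2^k}$; substituting $\|g_k\|=\gamma_k\mu_k^2/L$ and the upper bound $\mu_k^2\le e^{3/2}\mu_0^2=e\sqrt{e}\,\mu_0^2$ yields the stated estimate $\|g_k\|\le\frac{3e\sqrt{e}}{8L}\mu_0^2(\tfrac23)^{2^k}$. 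Finally $\|x^{k+1}-x^k\|=r_k\le\|g_k\|/\mu_k$ decays doubly exponentially, so $\{x^k\}$ is Cauchy; its limit lies in the compact set $B\subset S$, and the tail estimate $\sum_{j\ge k}r_j\lesssim r_k$ converts the $q$-quadratic decay of $g_k$ into $q$-quadratic convergence of $\{x^k\}$.

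The hard part is the precise bookkeeping in the two one-step estimates, both entangled with the fact that the active multiplier $\eta^k$ varies with $k$. Lemma \ref{upper_bound_of_J} naturally bounds $\la Jf(x^{k+1}),\eta^k\ra$ (the new Jacobian against the \emph{old} active combination), whereas $g_{k+1}=\la Jf(x^{k+1}),\eta^{k+1}\ra$ pairs it with the \emph{new} one, and one must argue---using that $\la\nabla^2 f(x^k),\xi\ra$ is positive definite for \emph{all} $\xi\in C$---that this discrepancy does not degrade the $\tfrac{3L}{2}r_k^2$ bound. Symmetrically, obtaining the clean factors $1\pm\gamma_k$ (and hence the sharp constants $e^{-1},e^{3/4}$ and $\tfrac83,\tfrac23$) requires the curvature perturbation to be dominated by $Lr_k$ rather than the crude $pLr_k$ of Lemma \ref{mu_M_le_r_M}; keeping this constant tight is the main quantitative obstacle, and it is cleanest when $C$ consists of coordinate directions.
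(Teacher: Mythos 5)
Your proposal follows essentially the same route as the paper: the step-size bound $r_k\le\|\la Jf(x^k),\eta^k\ra\|/\min_{\xi\in C}\lambda_1(\la\nabla^2 f(x^k),\xi\ra)=\gamma_k\mu_k/L$ from \eqref{10_01_25_aux2}, the one-step gradient estimate via Lemma \ref{upper_bound_of_J} with $M_k\le 2L$, the one-step curvature estimate via Lipschitz continuity of the Hessian, an induction preserving $\gamma_k\le\tfrac14$, telescoping products with $\ln(1\pm\gamma_j)$ for (ii), and the doubling recursion $\gamma_k\le\tfrac38(\tfrac23)^{2^k}$ plus the Cauchy estimate $\|x^{k+1}-x^k\|\le\gamma_k\mu_k/L$ for (iii). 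The two ``hard parts'' you flag but defer --- bounding $\|\la Jf(x^{k+1}),\eta^{k+1}\ra\|$ (new multiplier) by the quantity Lemma \ref{upper_bound_of_J} actually controls (the old active combination $\eta^k$, or the minimum over $C$), and getting the Hessian perturbation constant $L$ rather than $pL$ --- are precisely the points at which the paper itself proceeds by directly invoking Lemma \ref{upper_bound_of_J} for $\eta^{j+1}$ and by bounding $\|\la\nabla^2 f(x^{j+1})-\nabla^2 f(x^j),\xi\ra\|\le\|\nabla^2 f(x^{j+1})-\nabla^2 f(x^j)\|\le L\|x^{j+1}-x^j\|$ for unit $\xi$, so no additional argument appears there that your plan is missing.
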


\begin{proof}
\begin{enumerate}[(i)]
\item 
At first, by the method of mathematical induction, we prove that for all $k = 0, 1, 2, \ldots$, we have $\min_{\xi \in C} \lambda_1\left(\la \nabla^2 f(x^k), \xi \ra \right) > 0$. This will then prove that the sequence $\{\gamma_k\}$ is well-defined. \\ 

\noindent
Let $\min_{\xi \in C} \lambda_1\left(\la \nabla^2 f(x^j), \xi \ra \right) > 0$ for some $j \ge 0$. Then, $\gamma_j$ is well-defined. Suppose that $\gamma_j \le \frac{1}{4}$. \\ 

\noindent
We show that $\min_{\xi \in C} \lambda_1 \left(\la \nabla^2 f(x^{j+1}), \xi \ra \right) > 0$ and $\gamma_{j + 1} \le \frac{1}{4}$. \\ 

\noindent
We note from \eqref{10_01_25_aux2} that 
since $\lambda_1\left(\la \nabla^2 f(x^j), \eta^j \ra \right) > 0$, we have 
\allowdisplaybreaks
\begin{align}\label{aux1_20_02_25}
\left\| d_{M_j}(x^j) \right\| = ~&~ \left\| \left( \la \nabla^2 f(x^j), \eta^j \ra + \frac{M_j}{2} r_{M_j}(x^j) I_n \right)^{-1} \la Jf(x^j), \eta^j \ra \right \| \notag \\ 
\le ~&~ \frac{\left\| \la Jf(x^j), \eta^j \ra \right\|}{\lambda_1\left( \la \nabla^2 f(x^j), \eta^j \ra + \frac{M_j}{2} r_{M_j}(x^j) I_n \right)} \notag \\ 
\le ~&~ \frac{\left\| \la Jf(x^j), \eta^j \ra \right\|}{\lambda_1\left( \la \nabla^2 f(x^j), \eta^j \ra \right)} ~\le~ 
\frac{ \|\la Jf(x^j), \eta^j \ra \|}{\min_{\xi \in C} \lambda_1 \left( \la \nabla^2 f(x^j), \xi \ra \right)} = \frac{\gamma_j}{L} \min_{\xi \in C} \lambda_1 \left( \la \nabla^2 f(x^j), \xi \ra\right). 
\end{align}
From the assumption \eqref{assumption_ii} on Lipschitz condition, we have 
for all $\xi \in C$ that 
\begin{align}
& \|\la \nabla^2 f(x^{j + 1}) - \nabla^2 f(x^j), \xi \ra \| \le \| \nabla^2 f(x^{j + 1}) - \nabla^2 f(x^j)\| \le L \|x^{j + 1} - x^j\| \label{aux4_22_02_24} \\ 
\implies & \la \nabla^2 f(x^{j + 1}), \xi \ra \succeq \la \nabla^2 f(x^j), \xi \ra - L \| d_{M_j}(x^j) \| I_n. \notag
\end{align}
Thus, we get 
\begin{align}\label{aux1_22_02_25}
\lambda_1\left( \la \nabla^2 f(x^{j + 1}), \xi \ra \right) \ge  \lambda_1(\la \nabla^2 f(x^j), \xi \ra) - r_{M_j}(x^j) L I_n \text{ for all } \xi \in C. 
\end{align}

\noindent
So, from \eqref{aux1_22_02_25}, with the help of \eqref{aux1_20_02_25}, we get for all $\xi \in C$ that 
\begin{align} 
\lambda_1\left( \la \nabla^2 f(x^{j + 1}), \xi \ra \right) 
& \ge \lambda_1(\la \nabla^2 f(x^j), \xi \ra) - r_{M_j}(x^j) L  \notag \\ 
& \ge \lambda_1(\la \nabla^2 f(x^j), \xi \ra) - \gamma_j ~ \min_{\xi \in C} \lambda_1 \left( \la \nabla^2 f(x^j), \xi \ra \right)   \notag \notag \\
& \ge (1 - \gamma_j) \min_{\xi \in C} \lambda_1 \left( \la \nabla^2 f(x^j), \xi \ra \right).  
\label{aux2_22_02_25}
\end{align}
Therefore, $\min_{\xi \in C} \lambda_1 \left(\la \nabla^2 f(x^{j+1}), \xi \ra \right) > 0$. \\

\item 
Now, notice from Lemma \ref{upper_bound_of_J} and $M \le 2L$ that 
\begin{align}
\gamma_{j + 1} ~&~ = \frac{L \| \la Jf(x^{j + 1}), \eta^{j + 1} \ra \|}{\left(\min_{\xi \in C} \lambda_1 \left(\la \nabla^2 f(x^{j+1}), \xi \ra \right)\right)^2} \le \frac{L (L + M)}{2} \frac{r^2_{M_j}(x^j)}{\left(\min_{\xi \in C} \lambda_1\left(\la \nabla^2 f(x^{j + 1}), \xi \ra \right)\right)^2}\notag \\
~&~ \le  \frac{3 \gamma_j^2 }{2}  \left( \frac{\min_{\xi \in C} \lambda_1\left(\la \nabla^2 f(x^{j}, \xi \ra\right)}{\min_{\xi \in C} \lambda_1\left(\la \nabla^2 f(x^{j + 1}, \xi \ra\right)}\right)^2 
\overset{\eqref{aux2_22_02_25}}{\le} \frac{3}{2} \left( \frac{\gamma_j}{1 - \gamma_j}\right)^2.  \notag 
\end{align}
Therefore, as $\gamma_j \le \frac{1}{4}$, we have 
\[\gamma_{j + 1} \le \frac{3}{2} \left( \frac{\gamma_j}{1 - \gamma_j} \right)^2 \le \frac{8 \gamma^2_j}{3} \le \frac{2 \gamma_j}{3} \le \frac{1}{4}. \]
Thus, by induction, $\{\gamma_k\}$ is well-defined, and \eqref{gamma_k_ineqality} is followed. \\ 

\noindent
Next, we notice from \eqref{aux2_22_02_25} that 
\begin{align}
& \frac{\min_{\xi \in C} \lambda_1(\la \nabla^2 f(x^{j+1}), \xi \ra)}{\min_{\xi \in C} \lambda_1(\la \nabla^2 f(x^j), \xi \ra)} \ge (1 - \gamma_j) \text{ for all }j = 0, 1, 2, \ldots \notag \\ 
\implies & \frac{\min_{\xi \in C} \lambda_1(\la \nabla^2 f(x^{k}), \xi \ra)}{\min_{\xi \in C} \lambda_1(\la \nabla^2 f(x^0), \xi \ra)} \ge \prod_{j = 0}^k (1 - \gamma_j) \ge \prod_{j = 0}^\infty (1 - \gamma_j)    \text{ as } \gamma_j \le \frac{1}{4} \text{ for all } j \notag \\ 
\implies & \ln \left( \frac{\min_{\xi \in C} \lambda_1(\la \nabla^2 f(x^{k}), \xi \ra)}{\min_{\xi \in C} \lambda_1(\la \nabla^2 f(x^0), \xi \ra)} \right) \ge \sum_{j = 0}^{\infty} \ln(1 - \gamma_j) \ge - \sum_{j = 0}^\infty \frac{\gamma_j}{1 - \gamma_j} \text{ as }\ln \theta \ge 1 - \frac{1}{\theta} \text{ for all } \theta > 0 \notag \\ 
\implies & \ln \left( \frac{\min_{\xi \in C} \lambda_1(\la \nabla^2 f(x^{k}), \xi \ra)}{\min_{\xi \in C} \lambda_1(\la \nabla^2 f(x^0), \xi \ra)} \right) \ge - \frac{1}{1 - \gamma_0} \sum_{j = 0}^\infty \gamma_j \ge -\frac{3\gamma_0}{1 - \gamma_0} \text{ since } \gamma_{j + 1} \le \frac{2}{3}\gamma_j \notag \\  
\implies & \ln \left( \frac{\min_{\xi \in C} \lambda_1(\la \nabla^2 f(x^{k}), \xi \ra)}{\min_{\xi \in C} \lambda_1(\la \nabla^2 f(x^0), \xi \ra)} \right) \ge - 1 \text{ since } \gamma_0 \le \frac{1}{4} \notag \\ 
\implies & \frac{1}{e} \le \frac{\min_{\xi \in C} \lambda_1(\la \nabla^2 f(x^{k}), \xi \ra)}{\min_{\xi \in C} \lambda_1(\la \nabla^2 f(x^0), \xi \ra)}, \notag
\end{align}
which proves the lower bound in \eqref{hessian_eigen_bound}. To prove  the upper bound in \eqref{hessian_eigen_bound}, we see from \eqref{aux4_22_02_24} that for all $j = 0, 1, 2, \ldots$, 
\allowdisplaybreaks
\begin{align*}
~&~ \min_{\xi \in C} \lambda_1 (\la \nabla^2 f(x^{j + 1}), \xi \ra) \le   \min_{\xi \in C} \lambda_1 (\la \nabla^2 f(x^{j}), \xi \ra) + L r_{M_j}(x^j) \overset{\eqref{aux1_20_02_25}}{\le} (1 + \gamma_j) \min_{\xi \in C} \lambda_1 (\la \nabla^2 f(x^{j}), \xi \ra)\\ 
\implies ~&~ \frac{\min_{\xi \in C} \lambda_1 (\la \nabla^2 f(x^{k + 1}), \xi \ra)}{\min_{\xi \in C} \lambda_1 (\la \nabla^2 f(x^{0}), \xi \ra)} \le \prod_{j = 0}^k (1 + \gamma_j) \le  \prod_{j = 0}^\infty (1 + \gamma_j) \\ 
\implies ~&~ \ln \left(\frac{\min_{\xi \in C} \lambda_1 (\la \nabla^2 f(x^{k + 1}), \xi \ra)}{\min_{\xi \in C} \lambda_1 (\la \nabla^2 f(x^{0}), \xi \ra)} \right) \le \sum_{j = 0}^\infty \ln (1 + \gamma_j) \le \sum_{j = 0}^\infty \gamma_j \text{ since } \ln (1 + \theta) \le \theta \text{ for } \theta > 0 \\ 
\implies ~&~ \ln \left(\frac{\min_{\xi \in C} \lambda_1 (\la \nabla^2 f(x^{k + 1}), \xi \ra)}{\min_{\xi \in C} \lambda_1 (\la \nabla^2 f(x^{0}), \xi \ra)} \right) \le 3 \gamma_0 \le \frac{3}{4} \\ 
\implies ~&~ \frac{\min_{\xi \in C} \lambda_1 (\la \nabla^2 f(x^{k + 1}), \xi \ra)}{\min_{\xi \in C} \lambda_1 (\la \nabla^2 f(x^{0}), \xi \ra)} \le e^{\frac{3}{4}}. 
\end{align*}

\item 
From \eqref{hessian_eigen_bound} and \eqref{aux1_20_02_25}, we obtain for every $j = 0, 1, 2, \ldots$ that 
\begin{equation}\label{aux1_13_03_25}
\|x^{j + 1} - x^j \| \le \frac{\gamma_j}{L} \min_{\xi \in C} \lambda_1 (\la \nabla^2 f(x^j), \xi \ra) \le \frac{e^{\frac{3}{4}} \gamma_j}{L} \min_{\xi \in C} \lambda_1 \left(\la \nabla^2 f(x^0), \xi \ra\right) \le \alpha \left(\frac{2}{3} \right)^j,   
\end{equation}
where $\alpha : = \frac{e^{\frac{3}{4}} \gamma_0}{L} \min_{\xi \in C} \lambda_1 \left(\la \nabla^2 f(x^0), \xi \ra\right)$. Therefore, $\{x^k\}$ is a Cauchy sequence and converges $q$-quadratically. \\

\noindent
We see from the definition of $\gamma_k$ that 
\begin{equation}\label{aux7_22_02} 
\left\| \la Jf(x^k), \eta^k \ra \right\| = \frac{\gamma_k}{L}  \left(\min_{\xi \in C} \lambda_1 \left(\la \nabla^2 f(x^k), \xi \ra \right)\right)^2 \overset{\eqref{hessian_eigen_bound}}{\le} \frac{ \gamma_k ~ e \sqrt{e}}{L} \left(\min_{\xi \in C} \lambda_1 \left(\la \nabla^2 f(x^0), \xi \ra \right) \right)^2. 
\end{equation}
From \eqref{gamma_k_ineqality}, we observe for any $k$ that 
\begin{align} 
& \gamma_{k + 1} \le \frac{3}{2} \frac{\gamma_k^2}{(1 - \gamma_k)^2} \le \frac{3}{2} \frac{\gamma_k^2}{(1 - \gamma_{k - 1})^2} \le \cdots \le \frac{3}{2} \frac{\gamma_k^2}{(1 - \gamma_0)^2} \le  \frac{8}{3}  \gamma_k^2 ~\text{ since }~ \gamma_0 \le \frac{1}{4} \notag \\ 
\implies & \gamma_k \le  \frac{8}{3} \gamma^2_{k-1} \le \left(\frac{8}{3}\right)^{2^2 - 1} \gamma_{k - 2}^{2^2} \le \cdots \le \left(\frac{8}{3}\right)^{2^k - 1} \gamma_{0}^{2^k} = \frac{3}{8} \left(\frac{2}{3}\right)^{2^k}.  \notag 
\end{align}
Thus, \eqref{aux7_22_02} yields for all $k$ that 
\[\left\| \la Jf(x^k), \eta^k \ra \right\| 
\le \frac{3 e \sqrt{e}}{8L} \left(\min_{\xi \in C} \lambda_1 \left(\la \nabla^2 f(x^0), \xi \ra \right) \right)^2 \left(\frac{2}{3}\right)^{2^k}. \]
\end{enumerate}

\end{proof}

\begin{remark}
Theorem \ref{theorem_3_7} shows that if the initial point $x^0$ is chosen such a way that $\gamma_0 \le \frac{1}{4}$ and $\min_{\xi \in C} \lambda_1 \la \nabla^2 f(x^0), \xi \ra > 0$, then the sequence $\{x^k\}$ generated by Algorithm \ref{algo} converges to a point $\bar x$ that satisfies $\min_{\xi \in C}\lambda_1 \la \nabla^2 f(\bar x), \xi \ra > 0$. 
Indeed, because continuity of the function $\xi \mapsto \lambda_1 \left(\la \nabla^2 f(x^j), \xi \ra \right)$ for each $x^j \in B$ implies that from \eqref{aux2_22_02_25}, we have the following for all $\xi \in C$: 
\[ \lambda_1 \la \nabla^2 f(\bar x), \xi \ra = \lim_{j \to \infty} \lambda_1 \la \nabla^2 f(x^j), \xi \ra > 0. \]    
Hence, Theorem \ref{theorem_3_7}(iii) implies that $\{x^k\}$ converges $q$-quadratically to a weakly efficient point of \eqref{vop}. Further, note from \eqref{aux1_13_03_25} that the sequence $\{x^j\}$ is a Cauchy sequence. Thus, the restrictions $\gamma_0 \le \frac{1}{4}$ and $\min_{\xi \in C} \lambda_1 \la \nabla^2 f(x^0), \xi \ra > 0$ on the initial point $x^0$ and the compactness of $B$ indicate that the $q$-quadratic convergence of $\{x^k\}$ is local. Indeed local, because there is an assurance of the condition $\min_{\xi \in C} \lambda_1 \la \nabla^2 f(x^0), \xi \ra > 0$ only when $x^0$ is chosen close enough to a minimizer of $\la f(x), \xi \ra$ for all $\xi \in C$. 
\end{remark}

\section{Numerical Experiments} \label{section-numerical-experiment}
In this section, we execute the performance of Algorithm \ref{algo} on some test problems of vector optimization problems. The test problems are mentioned in Table \ref{performance-table}. The detailed statement and reference of all the test problems can be found in \cite{huband2006review, zhao2024convergence}. To identify the numerical data of the performance of Algorithm \ref{algo} on these test problems, a Matlab code of Algorithm \ref{algo} is executed in Matlab 2020a software installed on a computer with Intel(R) Xeon(R) CPU E5-2620 v4 @ 2.10GHz processor with 32.0 GB RAM and Windows 10 Pro OS. \\

The parameters and expressions used during the Matlab code implementation of each step of Algorithm \ref{algo} are as follows. 
\begin{itemize}
\item 
The set $S$ is as given in Table \ref{performance-table}. 

\item 
The ordering cone for each tri-objective test problem is taken as $K = \mathbb{R}^3_+$. For each bi-objective test problem, we take  
\[K = \{(y_1, y_2) \in \mathbb{R}^2_+: y_1 \le 3y_2, y_2 \le 3y_1\}. \]

\medskip
\item The generator $C$ of $\mathbb{R}^3_+$ is taken as the standard basis of $\mathbb{R}^3$, and for $K \subset \mathbb{R}^2$, we take 
\[C = \left\{
\frac{1}{\sqrt{10}} 
\begin{pmatrix} 
-1 \\ 3
\end{pmatrix}, 
\frac{1}{\sqrt{10}} 
\begin{pmatrix}
3 \\ -1     
\end{pmatrix} \right\}. \]

\item 
For all the test problems, we take $L_0 = 1$, $L = 1.5$, $M_0 = 3$ and $\varepsilon = 0.001. $

\item 
The initial point $x^0$ is chosen randomly from $S$ using the ``rand" function of Matlab. 

\item
To execute $M_k$ for each $k$, we follow the process mentioned in Remark \ref{process_to_evaluate_M_k}. 

\item The expressions of $h_{M_k}(x^k)$ and $d_{M_k}(x^k)$ are executed by using ``fmincon" constrained optimization Matlab function with ``Algorithm" option as ``interior-point." 

\end{itemize}

With the help of these parameters, we show a comparison of the proposed method with the existing steepest descent \cite{drummond2005steepest}, (BFGS) quasi-Newton \cite{kumar2023quasi}, (PRP) conjugate gradient \cite{perez2018nonlinear}, and trust region method \cite{carrizo2016trust} for multi-objective or vector optimization. We do not include the Newton method in the comparison because the conventional Newton methods in \cite{drummond2014quadratically, fliege2009newton} are applicable only for (strongly) convex problems. For the sake of convenient description, we use the abbreviations SD, CN, QN, CG, and TR to mean the steepest descent, cubic Newton, (BFGS) quasi-Newton, (PRP) conjugate gradient, and trust-region methods, respectively. The parameters used in the computation for the SD, QN, PRP-CG and TR are provided in Table \ref{parameters_methods}.

\begin{table}[!h]
\caption{Used parameters for different methods}\label{parameters_methods}
\begin{tabular}{lll}
\thickhline
Method                      & Reference & Parameters \\
\thickhline
Steepest Descent (SD)       & \cite[Section 3]{chuong2012steepest}      & $\beta = 0.0001$  \\ 
\hline 
BFGS Quasi-Newton (QN)      & \cite[Algorithm 1]{qu2011quasi}      &  $\beta = 0.0001$ \\
\hline 
PRP-Conjugate Gradient (CG) & \cite[NLCG algorithm]{perez2018nonlinear}      & \begin{tabular}[c]{@{}l@{}}$\beta_k = \max\left\{\beta^{\mbox{PRP}}_k,~0\right\}$, \\ $\rho = 0.0001$,\\ $\sigma = 0.1$, $\delta = 1.1$\end{tabular}   \\
\hline 
Trust Region (TR)           & \cite[Algorithm 1]{carrizo2016trust}      & \begin{tabular}[c]{@{}l@{}}$\Delta_0 = 1$, $\eta_1 = \gamma_1 = 0.1$, \\ $\eta_2 = 0.9 = \gamma_2 = 0.9$\end{tabular} \\ 
\thickhline
\end{tabular}
\end{table}

At first, we take a bi-objective and a tri-objective test problems and compare the error graph (the graph of $\|d_{M_k}(x^k)\|$ or $\|d^k\|$ versus $k$) of the proposed cubic Newton scheme with SD, QN, CG, and TR methods. \\

Take the PNR problem from Table \ref{performance-table}, which is a bi-objective convex optimization problem. For this problem, we randomly choose an initial point from $S = [-2, 2]^2$ and employ all of the CN, SD, QN and CG methods. For the initial point $x^0 = (-0.6049, -0.1946)^\top$, the graphs of the norm of the direction of movement and the iteration number are exhibited in Figure \ref{error-graph-PNR}. From Figure \ref{error-graph-PNR}(a) in linear scale, we clearly see that CN outperforms all other methods. Figure \ref{error-graph-PNR}(b) in log scale shows that the rate of convergence of $\|d^k\|$ is linear for CN and CG methods and superlinear for CN and QN methods.    \\

For the tri-objective case, we take the SLCDT2 problem and depict the error graph in Figure \ref{error-graph-SLCDT2} for a randomly chosen initial point 
$$x^0 = (-0.8654, -0.9445, -0.6519, 0.6733, -0.9563, 0.2677, -0.6400, -0.8537, 0.9082, 0.6217)^\top.$$ 
From this graph, we see that at the initial few iterations, CN is quite faster than all other methods but crawls when it comes closer to the optimum. \\

Next, to find detailed performance profiles and to make a comparison of the proposed method with SD, QN, CG, and TR methods, we use six metrics---median time, median iteration number, hypervolume, purity, spread $\Delta_p$ and spread $\Gamma_p$---whose detailed description and formula can be found in \cite{audet2021performance, zhao2024convergence}. For a particular test problem, to find the metric values for all five methods, we randomly choose a set of fifty initial points from the problem domain $S$ and use the same set of initial points for all the methods. Then, the median value of the fifty data for each of the six metrics is displayed in Table \ref{performance-table}. A dash in the table indicates that the corresponding method for the problem converges for none of the fifty initial points. For instance, QN does not converge for KW2 problem. CG does not converge for MOP3 problem. Possibly, the non-convergence of CG for MOP3 is due to the fact that the convergence of CG is guaranteed only under the assumption that the conjugate gradient direction is $K$-descent \cite{perez2018nonlinear}, which is not followed in MOP3 problem.   \\

Movement of the iterative points $\{f(x^k)\}$ corresponding to $\{x^k\}$, generated by Algorithm \ref{algo}, for the fifty and ten random initial points are depicted in Figure \ref{for-fifty-random-initial-points} and Figure \ref{ten-points-performance}, respectively. In all the sub-figures of these two figures, the blue-colored bullet points are the initial points. The pink-colored path starting from a blue bullet and ending at a green bullet is the trajectory traced by the sequence $\{f(x^k)\}$ corresponding to the blue-colored initial point. The black-colored bullets in the trajectory are the intermediate points of the sequence $\{f(x^k)\}$. The green-colored bullets are terminal points. The grey-shaded region in each figure is the feasible region in the objective space. The sky-blue shaded region in all the two-dimensional figures represents the ordering cone $K$. From the sub-figures corresponding to the PNR and JOS1 problems in Figures \ref{for-fifty-random-initial-points} and \ref{ten-points-performance}, respectively, we see that the proposed Algorithm \ref{algo} is able to successfully generate the weakly efficient points with respect to the general ordering cone $K$; note that the green-colored bullet points generated by Algorithm \ref{algo} on the right-down part of PNR and top-left part of JOS1 feasible regions are not weakly efficient points with respect to the standard ordering cone $\mathbb{R}^2_+$, but those points are weakly efficient points with respect to the cone $K$.      \\

Associated with the data in Table \ref{performance-table}, the performance profiles of the four metrics are exhibited in Figure \ref{figures_of_performance_profiles}. We do not evaluate the performance profile graphs for median time and median iteration because, for some problems, these values are either zero or very close to zero. During the performance profile graphs, we have also not included the problems KW2 and MOP3 because for these two problems, either the QN or CG method does not converge. So, the performance profiles depicted in Figure \ref{figures_of_performance_profiles} are for the metric values in Table \ref{performance-table} for the rest of 14 problems out of 16 test problems. From the performance profiles graphs in Figure \ref{figures_of_performance_profiles}, we see that in terms of the spread metrics, CN outperforms all other methods; in terms of hypervolume and purity, all methods perform almost similarly.   \\

From Table \ref{performance-table}, we see that the median time and iteration taken by the cubic Newton method is either smallest or very close to the second smallest. The reason that CN is not always the smallest is that CN  aims to find weakly efficient points, but all other methods converge just at a stationary point. For instance, we see from the MLF1 figure in Figure \ref{ten-points-performance} that although most of the initial points are stationary points, the CN method does not stop at the initial point and mostly bypasses on its way all the local stationary points and reaches to a global weakly efficient point. Thus, CN takes one iteration to converge, but all other methods just converge at the initial point. Thus, in Table \ref{performance-table}, we see that the median iteration number for CN is one,  but it is zero for all other methods.


\begin{landscape}

\begin{figure}[!h]
\vspace{-1cm}
\centering
\begin{minipage}{.65\textwidth}
\centering
\begin{subfigure}{0.5\textwidth}
\centering 
\includegraphics[scale=0.35]{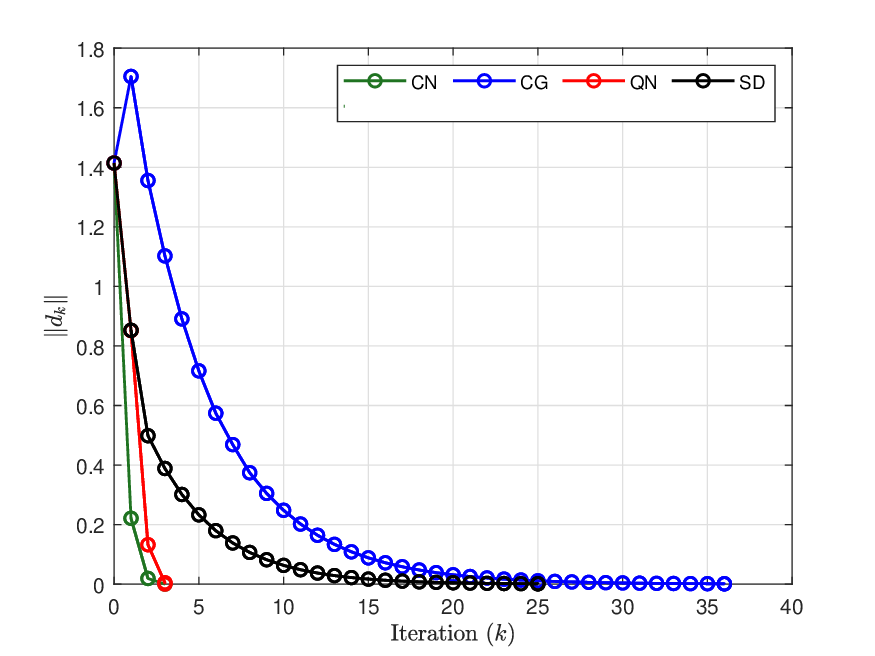}
\caption{In the linear scale}\label{subfig_2_a}
\end{subfigure}%
\begin{subfigure}{0.5\textwidth}
\centering
\includegraphics[scale=0.35]{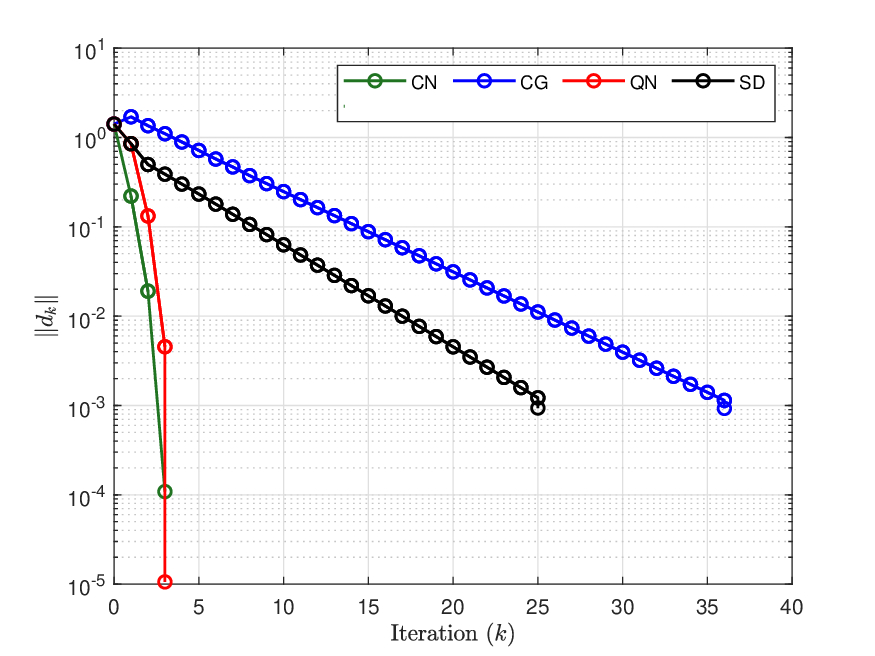}
\caption{In the log scale}\label{subfig_2_b}
\end{subfigure}
\caption{Error graph for the PNR problem}\label{error-graph-PNR}
\end{minipage}%
\quad
\begin{minipage}{0.65\textwidth}
\begin{subfigure}[b]{0.5\textwidth}
\centering 
\includegraphics[scale=0.35]{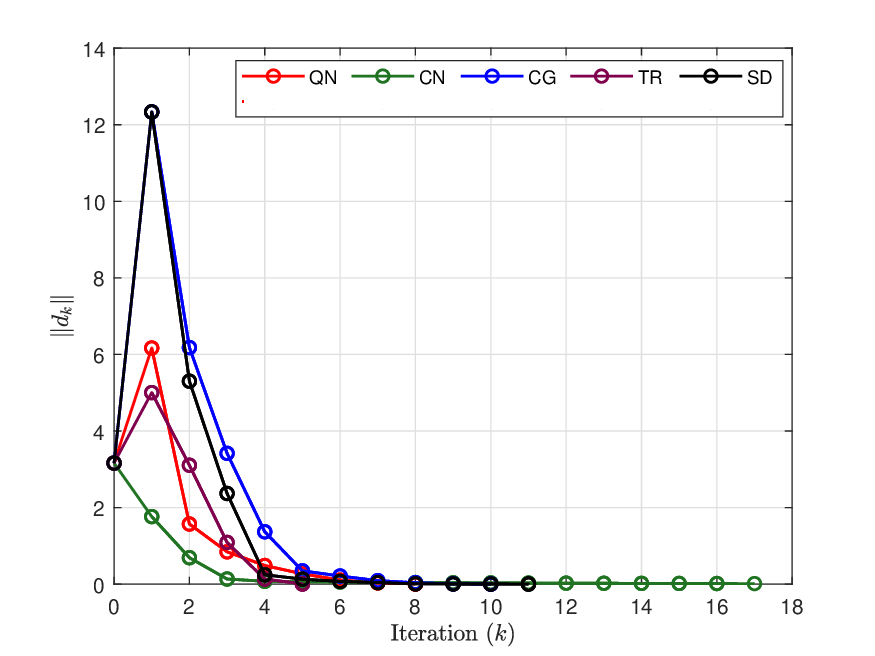}
\caption{In the linear scale}\label{subfig_3_a}
\end{subfigure}%
\begin{subfigure}[b]{0.5\textwidth}
\centering
\includegraphics[scale=0.35]{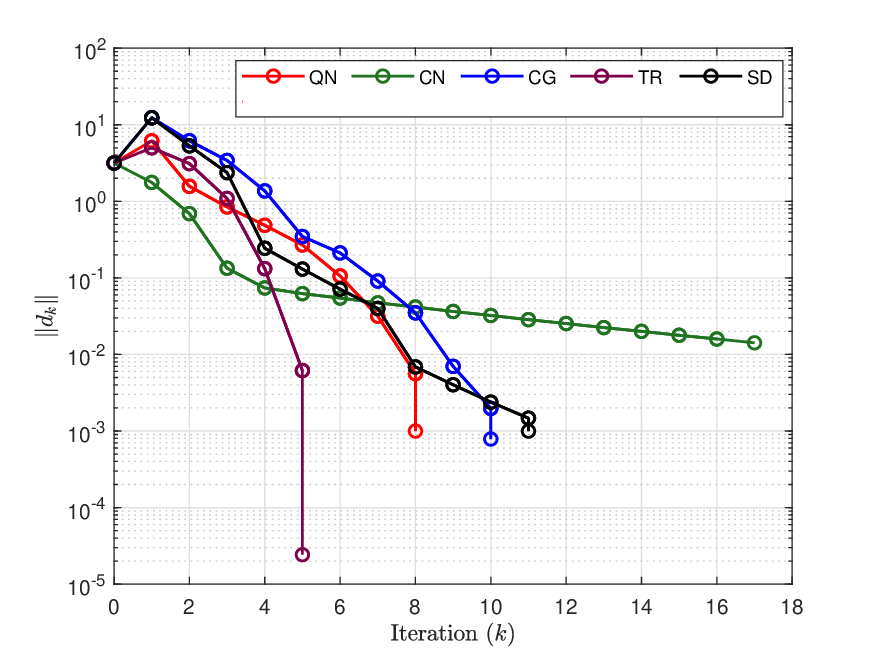}
\caption{In the log scale}\label{subfig_3_b}
\end{subfigure}
\caption{Error graph for the SLCDT2 problem}\label{error-graph-SLCDT2}
\end{minipage}
\end{figure}    


\begin{figure}[!h]
\vspace{-1cm}
\captionsetup[subfigure]{labelformat=empty}
\centering
\includegraphics[scale=0.35]{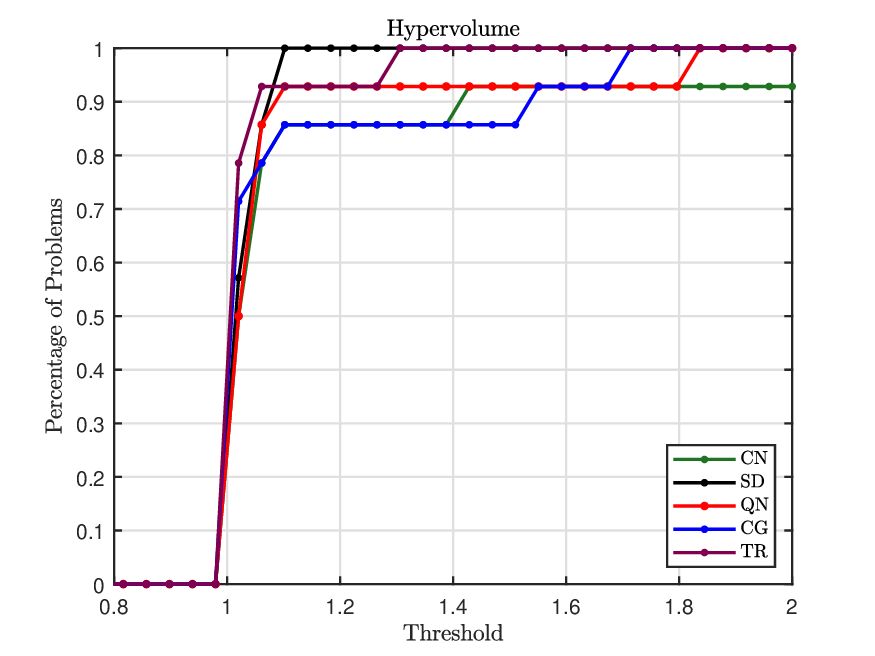}
\includegraphics[scale=0.35]{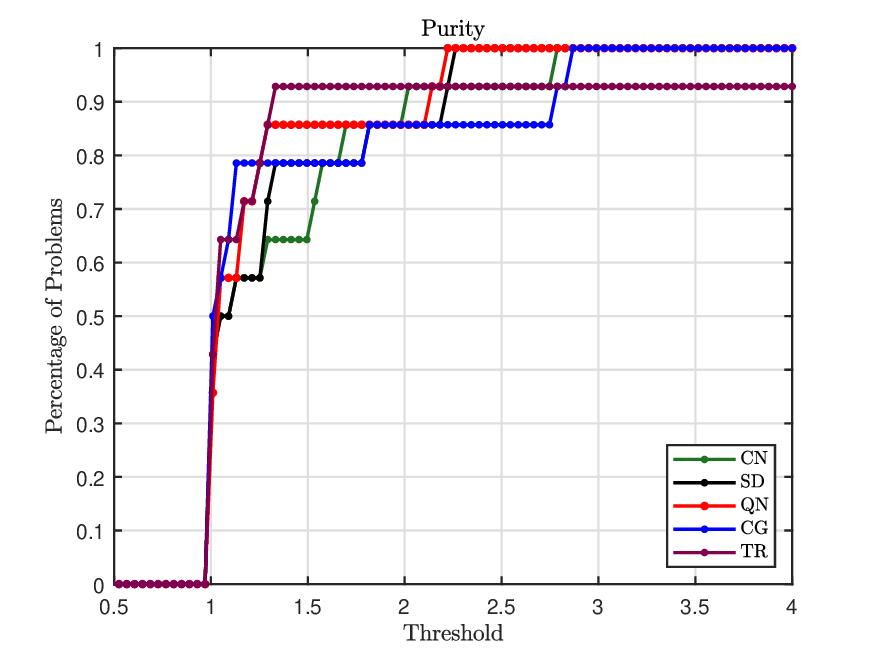} 
\includegraphics[scale=0.35]{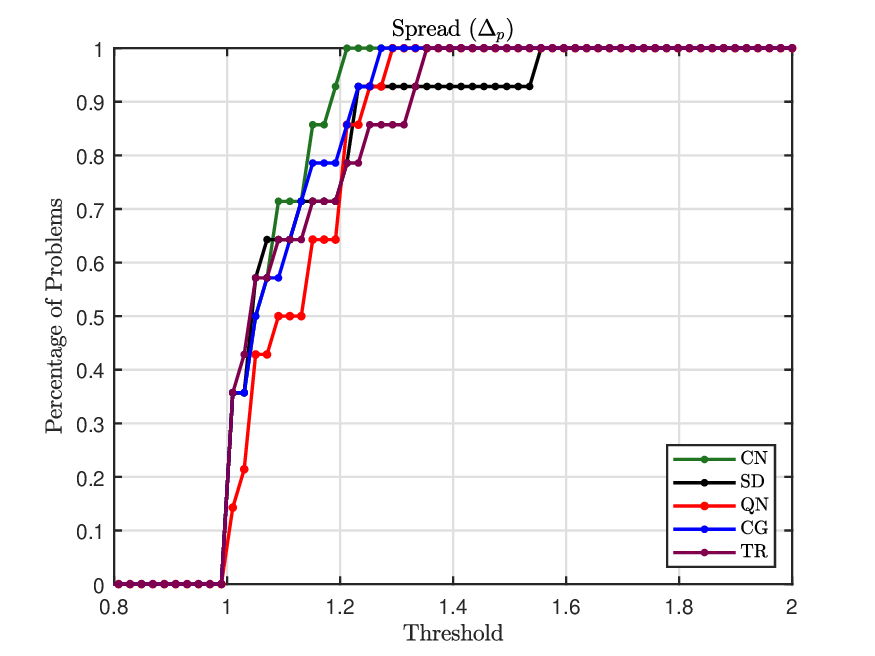}
\includegraphics[scale=0.35]{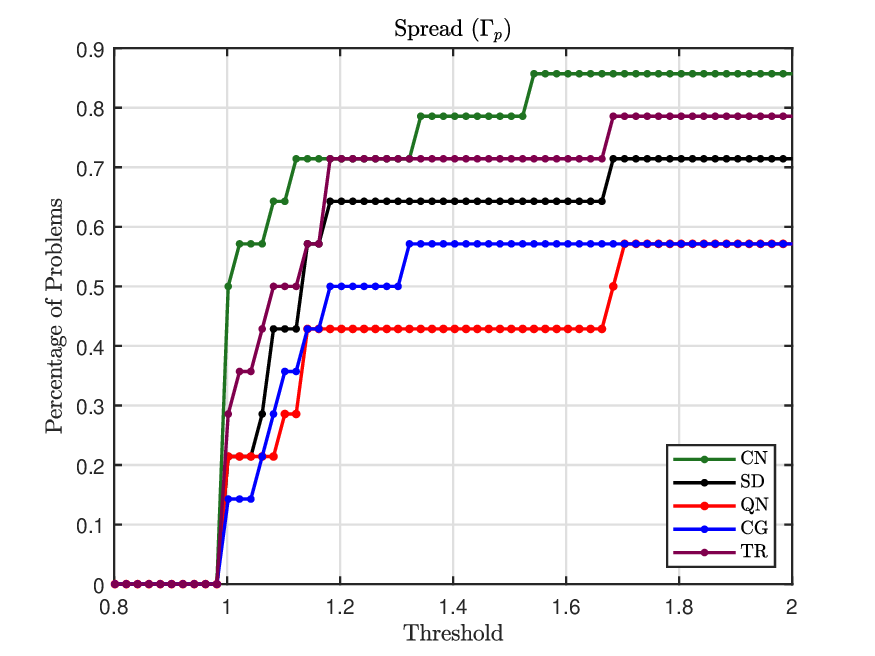} 
\caption{Performance profiles}\label{figures_of_performance_profiles} 
\end{figure}

\vspace{-0.5cm}

\begin{figure}[!h]
\captionsetup[subfigure]{labelformat=empty}
\centering
\includegraphics[scale=0.35]{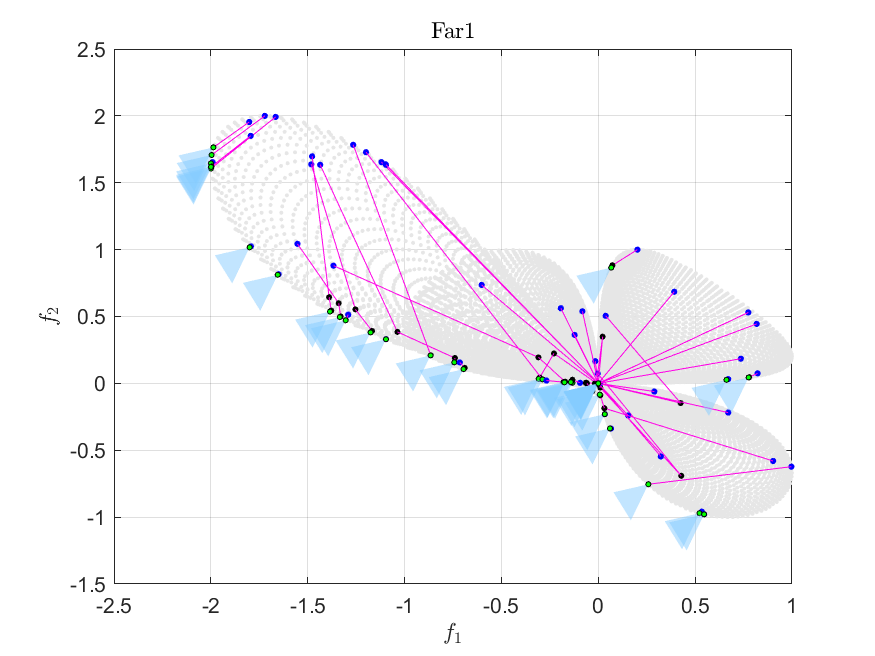}
\includegraphics[scale=0.35]{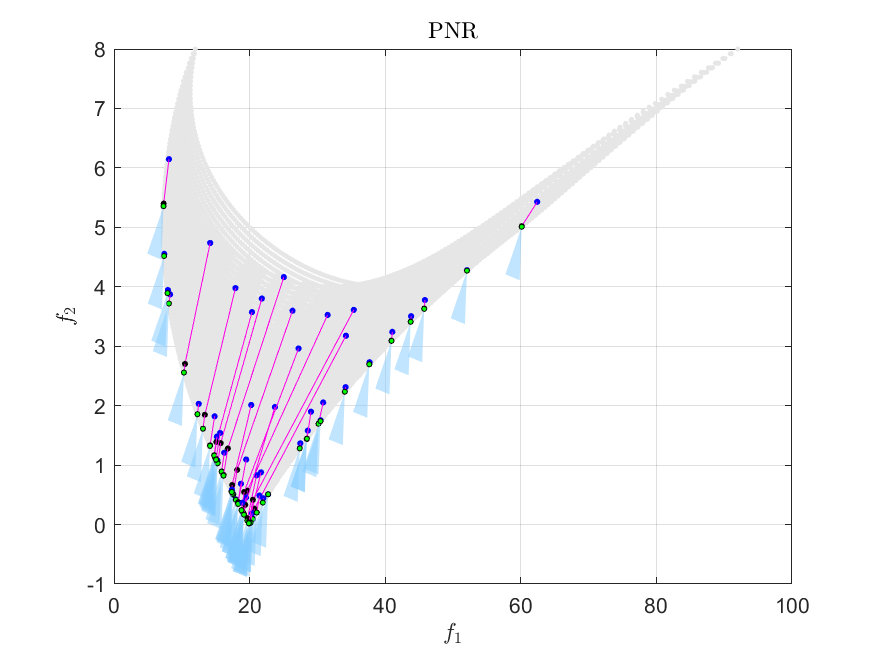}
\includegraphics[scale=0.35]{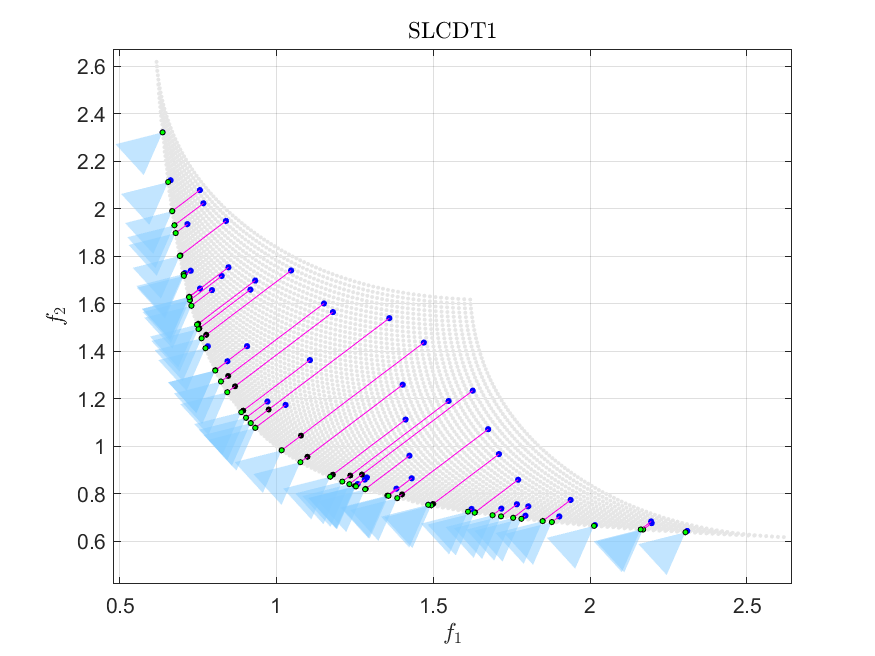}
\includegraphics[scale=0.35]{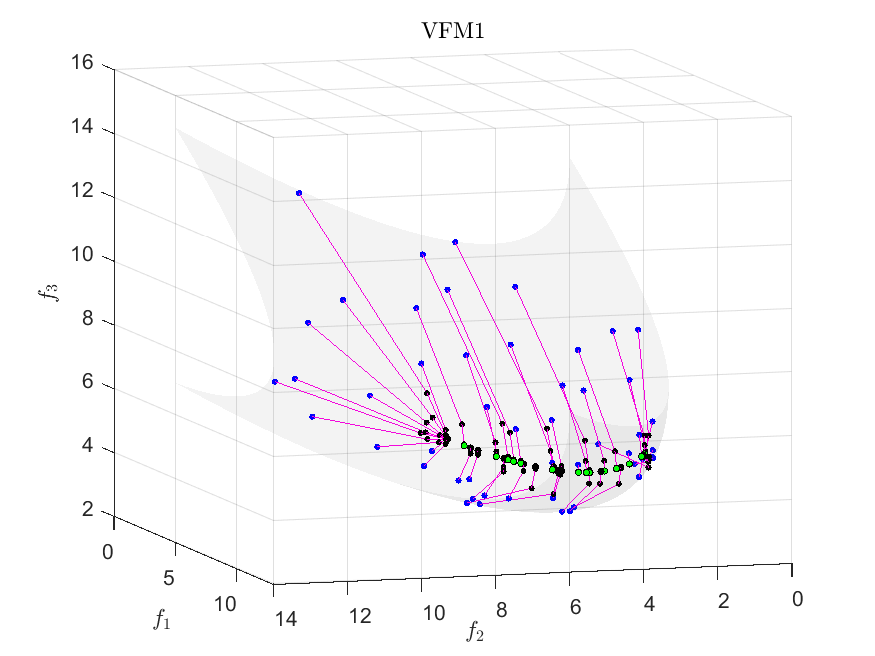}
\caption{Weakly Pareto critical points generated by Algorithm \ref{algo} for a few test problems for fifty random initial points}\label{for-fifty-random-initial-points} 
\end{figure}
\end{landscape}

\begin{landscape}

\begin{figure}[!h]
\captionsetup[subfigure]{labelformat=empty}
\subfloat[]{
\hspace{-0.95cm}
\includegraphics[width=0.35\textwidth]{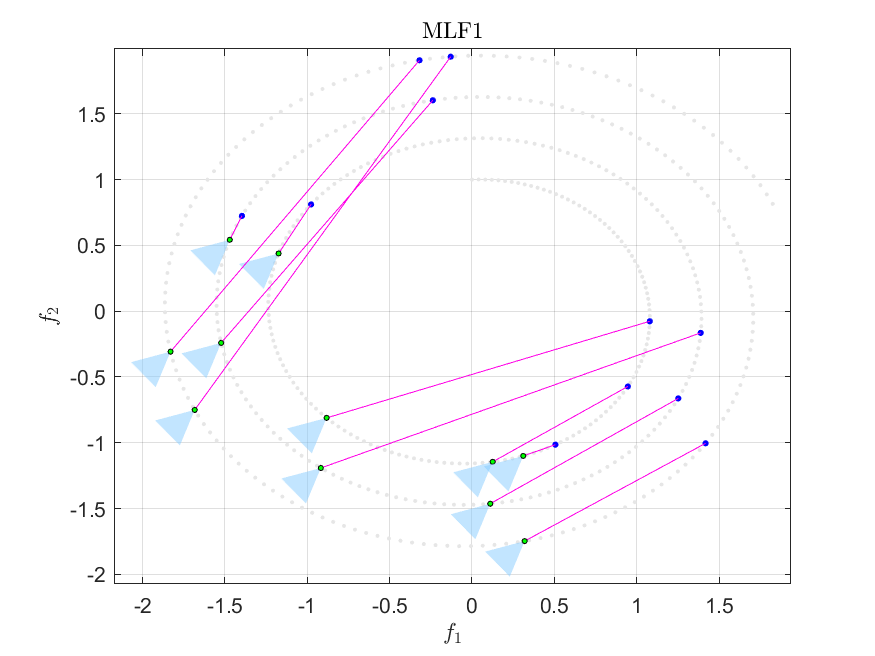}
\includegraphics[width=0.35\textwidth]{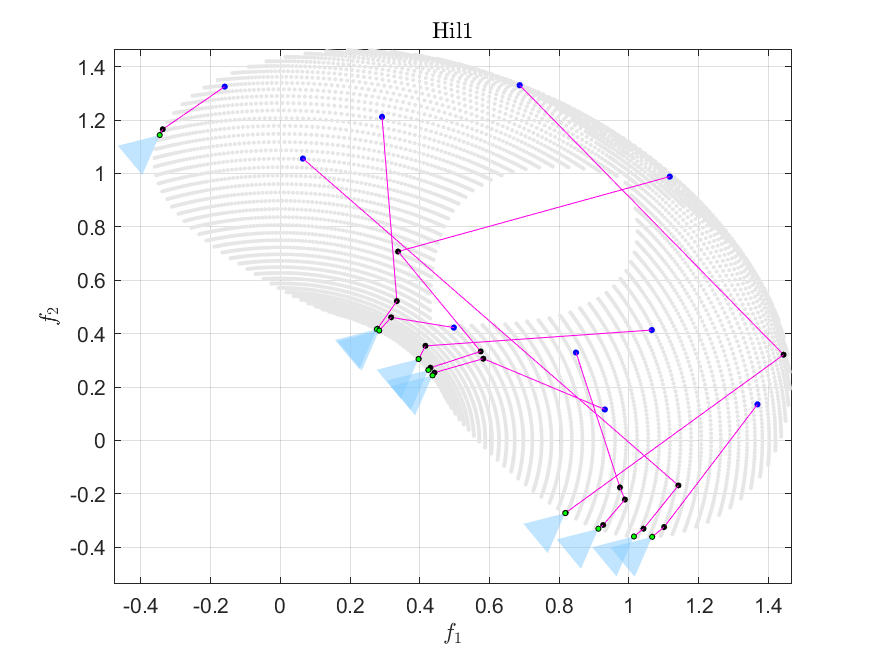}
\includegraphics[width=0.35\textwidth]{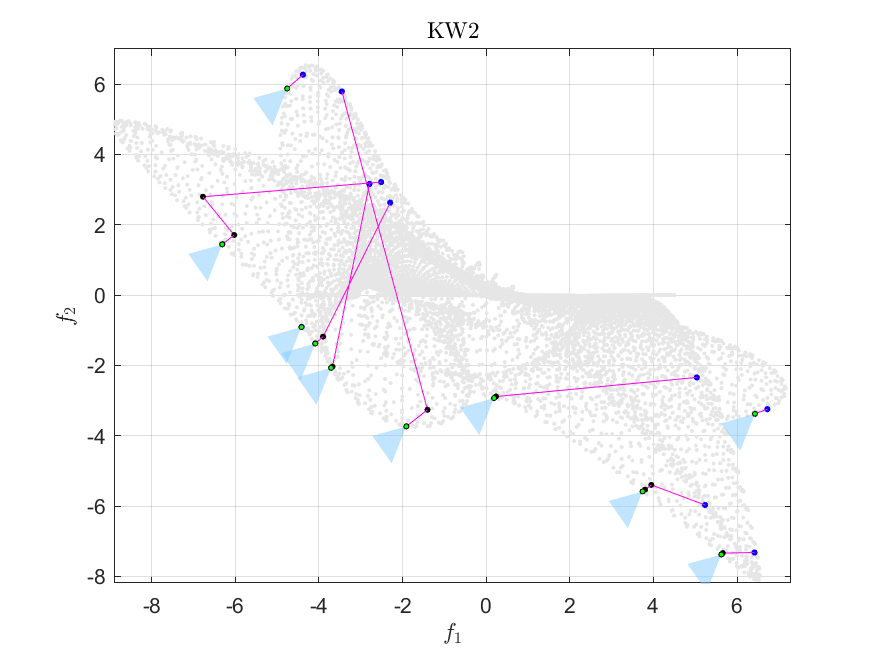}
\includegraphics[width=0.35\textwidth]{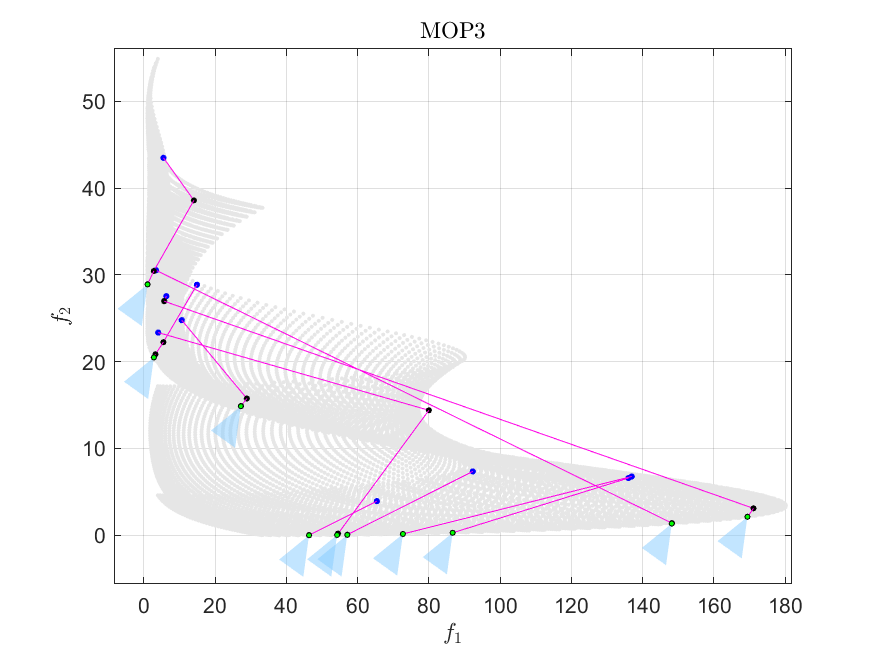}
}
\vspace{-0.6cm}
\subfloat[]{ 
\hspace{-0.95cm}
\includegraphics[width=0.35\textwidth]{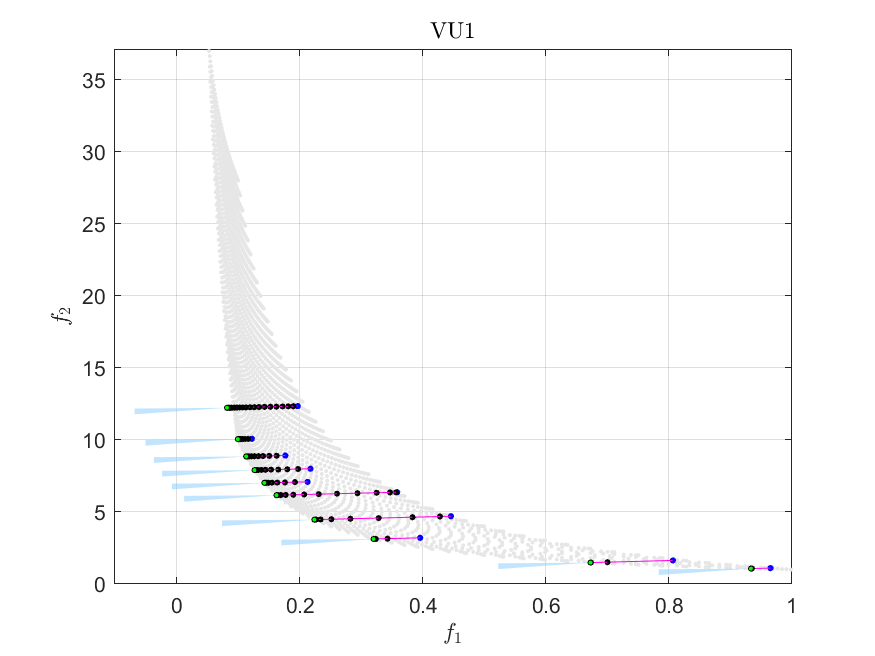}
\includegraphics[width=0.35\textwidth]{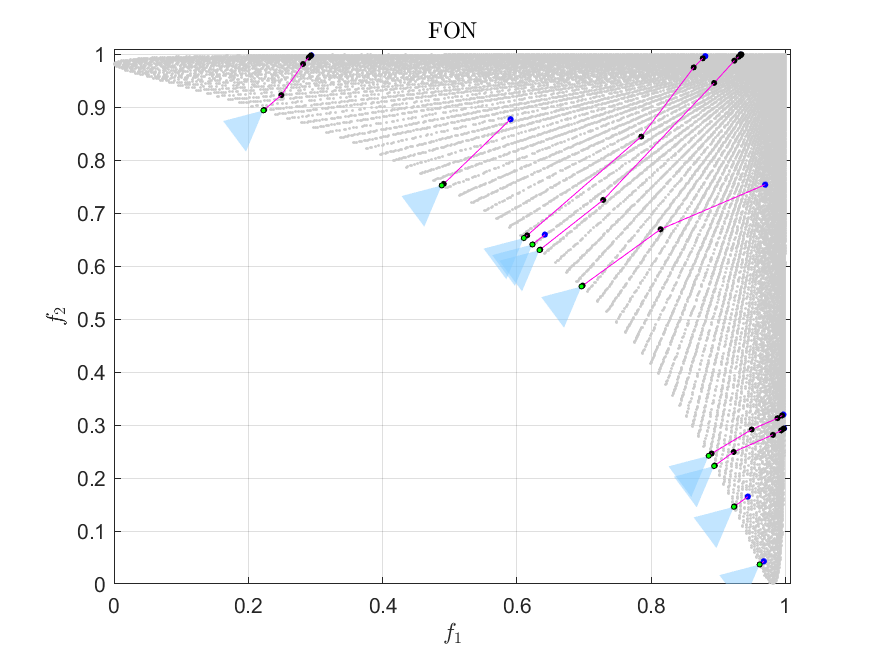}
\includegraphics[width=0.35\textwidth]{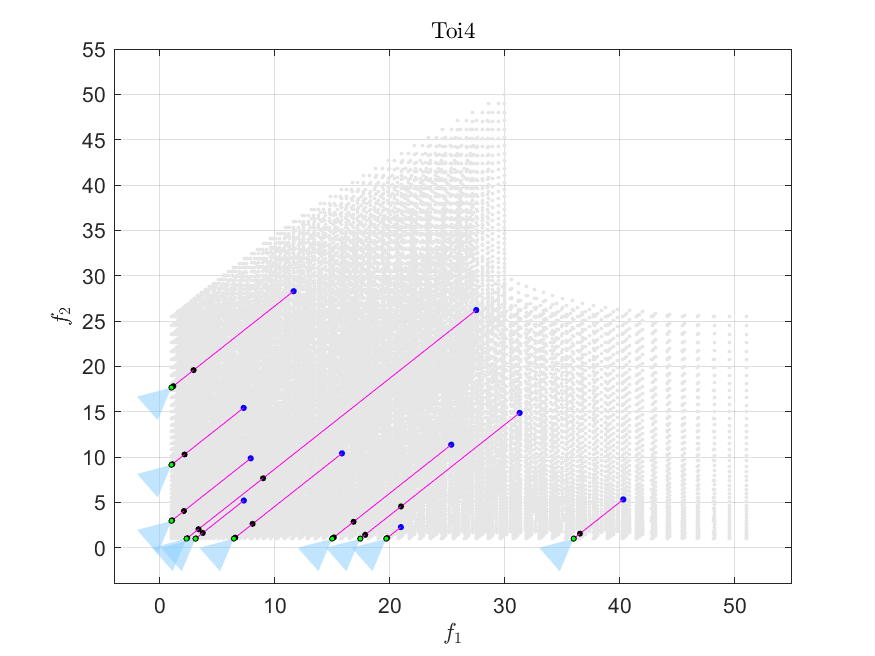}
\includegraphics[width=0.35\textwidth]{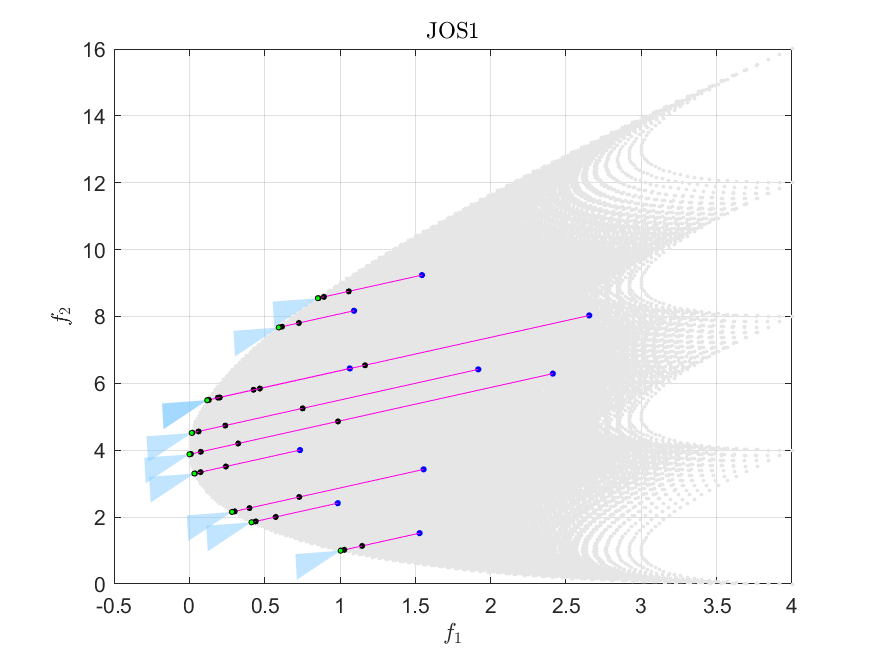}
}
\vspace{-0.6cm}
\subfloat[]{
\hspace{-0.95cm}
\includegraphics[width=0.35\textwidth]{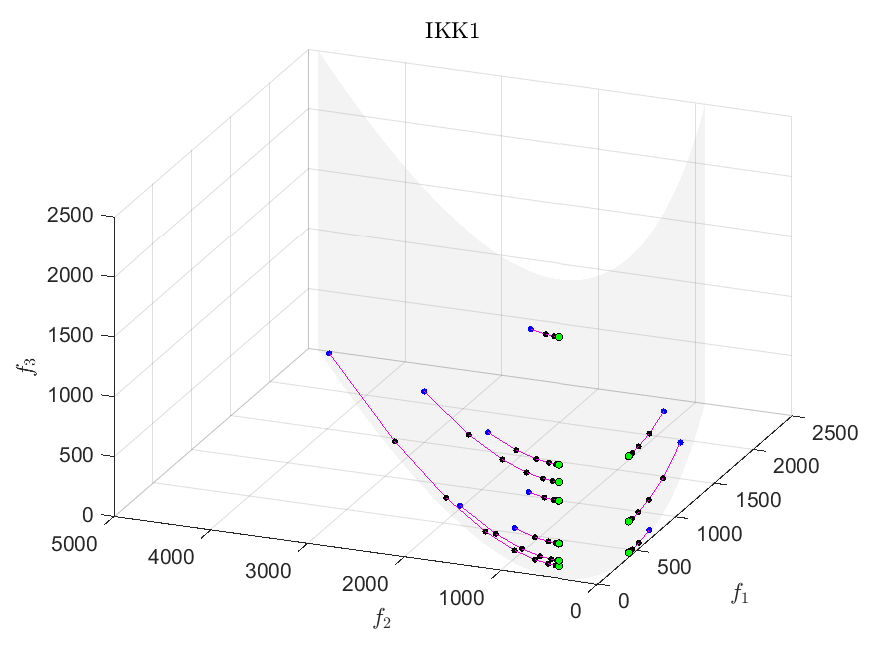}
\includegraphics[width=0.35\textwidth]{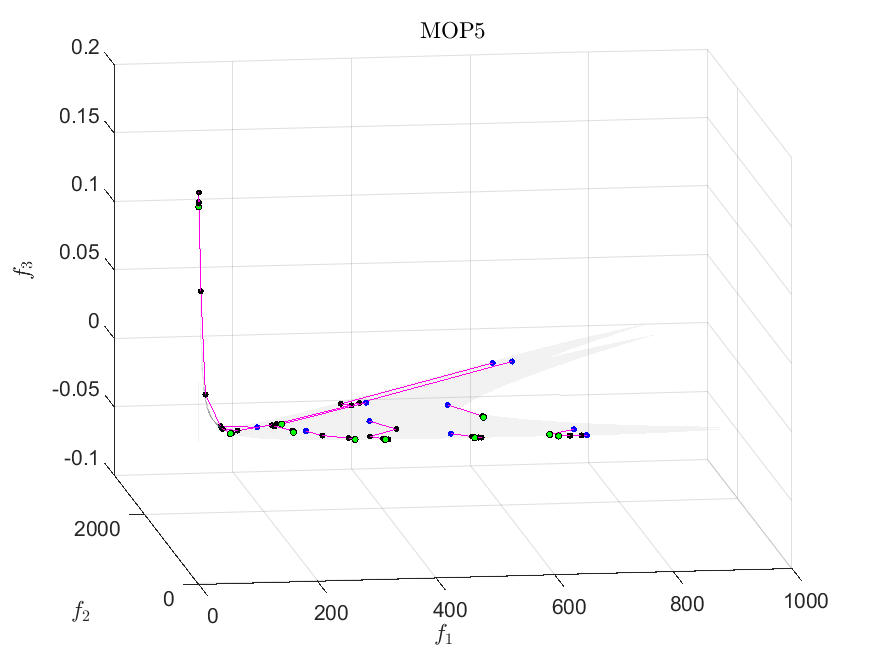}
\includegraphics[width=0.35\textwidth]{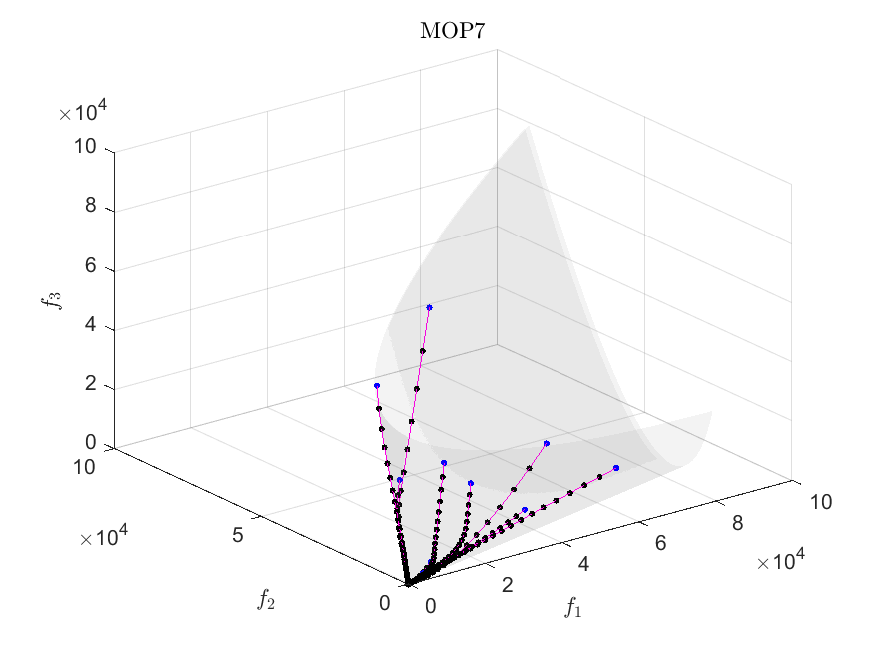}
\includegraphics[width=0.35\textwidth]{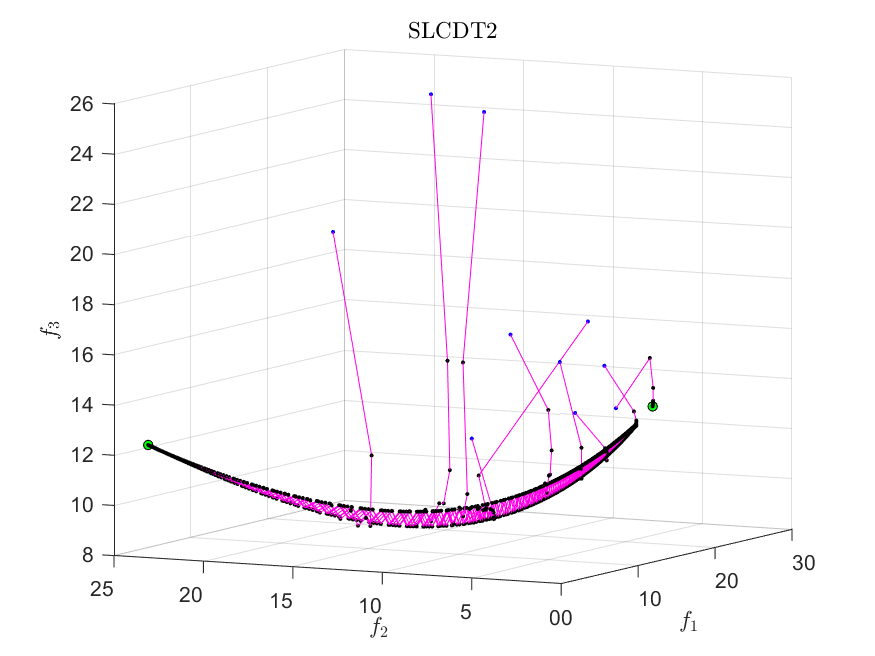}
}

\caption{Weakly Pareto critical points generated by Algorithm \ref{algo} for a few test problems for ten random initial points}\label{ten-points-performance} 
\end{figure}

\end{landscape}

\begin{landscape}

\begin{table}[h]
\caption{Performance metrics on test problems for fifty randomly chosen initial points}\label{performance-table}
\begin{tabular}{cccccccccccc}
\thickhline
Problem               & $m$                & $n$                & Domain $(S)$              & Problem Type                          & Method & Median Time (s) & Median Iteration & Hypervolume & Purity & $\Delta_p$ & $\Gamma_p$ \\ \thickhline
\multirow{5}{*}{MLF1} & \multirow{5}{*}{2} & \multirow{5}{*}{1} & \multirow{5}{*}{$[0,20]$} & \multirow{5}{*}{Non-convex} 

& CN     &   0.0370    &   1   &  2.1468   &  0.2667    &     4.5249   &    0.4334       \\ 

                      &                    &                    &                   & 
                      
& SD     &  0.0092    &  0    &  2.1470   &    0.3333   &   4.9079     &      0.4297     \\    
                      &                    &                    &                   &                                               
& QN     &   0.0197   &   0   &  2.1469   &    0.3333   &    4.7900    &      0.4297     \\

                      &                    &                    &                   &                                               
& CG     &   0.0088   &   0   &  2.1469   &  0.2667    &     5.1305   &       0.4297    \\

                      &                    &                    &                   &                                               
 & TR     &    0.0186  &  0    &  2.2583   &  0.7333    &    5.0189    &       0.4855     \\ \hline
\multirow{5}{*}{Far1} & \multirow{5}{*}{2} & \multirow{5}{*}{2} & \multirow{5}{*}{$[-1,1]^2$} & \multirow{5}{*}{Non-convex}

& CN     &   0.1290    &  3    &  3.6906   &  0.1846    &      1.0234    &      0.5891      \\ 

                      &                    &                    &                   & 
                      
& SD     &   0.2389   &   46.5   &   3.8201  &   0.2846   &   1.0151     &     0.3625      \\    
                      &                    &                    &                   &                                               
& QN     &   0.0958   &   4   &  3.6762   &   0.2308   &   1.0323     &      0.5318     \\

                      &                    &                    &                   &                                               
& CG     &  0.3695    &   5   &  2.5079   &  0.1000    &    1.0110    &      0.9024      \\

                      &                    &                    &                   &                                               
 & TR     &   0.1464   &  5    &   3.7402  &    0.2231   &    1.2309    &     0.3625      \\ \hline
\multirow{5}{*}{PNR} & \multirow{5}{*}{2} & \multirow{5}{*}{2} & \multirow{5}{*}{$[-2,2]^2$} & \multirow{5}{*}{Convex} 

& CN     &  0.1355    &   3   & 39.8090    &  0.2279    &   1.0366     &  8.0728         \\ 

                      &                    &                    &                   & 
                      
& SD     &  0.0634    &  14.5    & 37.3318    &  0.1912    &   1.0022     &  10.2422         \\    
                      &                    &                    &                   &                                               
& QN     &  0.0648    &   4   &  37.9704   &  0.2206    &   0.9992     &   7.8276        \\

                      &                    &                    &                   &                                               
& CG     &  1.5974    &   22.5   &  37.0297   &   0.1397    &   1.0345     & 10.2084          \\

                      &                    &                    &                   &                                               
 & TR     &  0.0580   &    2  &  39.8477   &    0.2500   &   0.9947     &  42.3041         \\ \hline
\multirow{5}{*}{Hil1} & \multirow{5}{*}{2} & \multirow{5}{*}{2} & \multirow{5}{*}{$[0, 1]^2$} & \multirow{5}{*}{Non-convex}

& CN     &   0.1832   &  4    &  1.0257   &  0.1832    &   1.1273     &     0.3643      \\ 

                      &                    &                    &                   & 
                      
& SD     &   0.0655   &   14   &  1.0921   &   0.2304   &  0.7890      &    0.3397       \\    
                      &                    &                    &                   &                                               
& QN     &   0.0642   &   4   &  1.0851   &   0.2251   &   0.9547     &      0.3360     \\

                      &                    &                    &                   &                                               
& CG     &  1.2442    &   22   &   1.0905  &   0.2042    &   0.9676     &    0.3129       \\

                      &                    &                    &                   &                                               
 & TR     &   0.1014   &  3    &   1.0923  &   0.2251   &   1.2265     &     0.3089      \\ \hline
\multirow{5}{*}{KW2} & \multirow{5}{*}{2} & \multirow{5}{*}{2} & \multirow{5}{*}{$[-3,3]^2$} & \multirow{5}{*}{Convex} 

& CN     &  0.1527    &  4    &  83.8527   & 0.1864     &  1.1270      & 17.5546           \\ 

                      &                    &                    &                   & 
                      
& SD     &    0.0210   &   17.5   &  78.6441   & 0.3051     & 1.0599       &   17.9064        \\    
                      &                    &                    &                   &                                               
& QN     &   ---   &   ---   &  ---   & ---     &   ---     &     ---      \\

                      &                    &                    &                   &                                               
& CG     &   3.0562   &    51  & 83.7377    & 0.2712     & 0.9961        &    17.5518       \\

                      &                    &                    &                   &                                               
 & TR     &   0.0942   &   3   &  79.7762   & 0.2373     & 1.0959       &    18.2415       \\ \hline
\multirow{5}{*}{SLCDT1} & \multirow{5}{*}{2} & \multirow{5}{*}{2} & \multirow{5}{*}{$[-1.5, 1.5]^2$} & \multirow{5}{*}{Non-convex}

& CN     &  0.1236    &  3    &  2.3640   &   0.8197   &   0.9288     &     0.2093       \\ 

                      &                    &                    &                   & 
                      
& SD     &   0.0168   &   2.5   &  2.3640   &   0.8197   &    0.9287    & 0.2091          \\    
                      &                    &                    &                   &                                               
& QN     &   0.0546   &   3   &   2.3640  &    0.8197  &  0.9287      & 0.2091          \\

                      &                    &                    &                   &                                               
& CG     &   0.1053   &  2   &  2.3640   &   0.8197   &   0.9287     &   0.2091        \\

                      &                    &                    &                   &                                               
 & TR     &  0.0551    &  2   &  2.3601   &   0.7049   &    0.9287    &  0.2091         \\ \hline
\multirow{5}{*}{MOP3} & \multirow{5}{*}{2} & \multirow{5}{*}{2} & \multirow{5}{*}{$[-\pi, \pi]^2$} & \multirow{5}{*}{Non-convex}

& CN     &    0.1594   &   4   & 1.2138$\times 10^{3}$    &0.2333      &   1.7958     &    36.9866       \\ 

                      &                    &                    &                   & 
                      
& SD     &   0.0943   &   21.5   &  1.0992$\times 10^{3}$   &    0.1333   &   1.8868     &     22.4569      \\    
                      &                    &                    &                   &                                               
& QN     &  0.0864    &   5   & 1.2061$\times 10^{3}$    &   0.1333    &     1.7610    &     31.0150      \\

                      &                    &                    &                   &                                               
& CG     &   ---   &   ---   &  ---   &  ---    &    ---    &     ---      \\

                      &                    &                    &                   &                                               
 & TR     &  0.0688    &   3.5   &  1.2193$\times 10^{3}$    &   0.5000   &   2.6745     &     73.6982      \\ \hline
\multirow{5}{*}{VU1} & \multirow{5}{*}{2} & \multirow{5}{*}{2} & \multirow{5}{*}{$[-3, 3]^2$} & \multirow{5}{*}{Non-convex}

& CN     &  0.4542    &  12    &  16.0610   &    0.2155   &   1.1158      &   1.8651         \\ 

                      &                    &                    &                   & 
                      
& SD     &   0.7149   &   692.5   &   16.0663  &    0.2155   &    1.0874    &  1.8631         \\    
                      &                    &                    &                   &                                               
& QN     &   0.2468   &   7   &   16.0547  &     0.2155  &   1.1512     &    1.9843        \\

                      &                    &                    &                   &                                               
& CG     &   67.8223   &   1051   &  16.0564   &   0.1983   &   1.1077     &  1.8633         \\

                      &                    &                    &                   &                                               
 & TR     &   0.1278   &   12   & 16.0351    &  0.1767    &    1.0734      &   1.8640         \\ \thickhline
\end{tabular}
\end{table}

\end{landscape}

\begin{landscape}
\begin{table}[h]
\ContinuedFloat
\caption{(Continued)}
\begin{tabular}{cccccccccccc}
\thickhline
Problem               & $m$                & $n$                & Domain $(S)$              & Problem Type                          & Method & Median Time (s) & Median Iteration & Hypervolume & Purity & $\Delta_p$ & $\Gamma_p$ \\ \thickhline
\multirow{5}{*}{FON} & \multirow{5}{*}{2} & \multirow{5}{*}{3} & \multirow{5}{*}{$[-4,4]^3$}& \multirow{5}{*}{Non-convex}

& CN     &   0.3190   &   8   &  0.2327   &   0.2374   &   1.1544     &  0.1726         \\ 

                      &                    &                    &                   & 
                      
& SD     &  0.1522    &    28  &   0.2299    &  0.2172    &    1.2391     &   0.1958        \\    
                      &                    &                    &                   &                                               
& QN     &   0.2060   &   5   &  0.2206   &   0.1111    &    1.0019    &     0.2294      \\

                      &                    &                    &                   &                                               
& CG     &  2.3128    &  25    &   0.2339  &   0.2172   &   1.0997     &  0.1761         \\

                      &                    &                    &                   &                                               
 & TR     &    0.1735   &   5   &  0.2385   &    0.2273   &   0.9427     &   0.1968         \\ \hline
 
\multirow{5}{*}{Toi4} & \multirow{5}{*}{2} & \multirow{5}{*}{4} & \multirow{5}{*}{$[-2,5]^4$} & \multirow{5}{*}{Convex}

& CN     &   0.2410   &  5   &  638.4558   &  0.2252    &  0.9669      &   16.0117        \\ 

                      &                    &                    &                   & 
                      
& SD     &  0.0217    &   3   &  638.4568   &   0.1757   &   0.9129      &  14.0872         \\    
                      &                    &                    &                   &                                               
& QN     &   0.0853   &   4   &  638.4568   &    0.2207  &    0.9601     &  14.0957         \\

                      &                    &                    &                   &                                               
& CG     &    0.4653  &   5   &  638.4568   &  0.2252    &     0.9918   &  14.7739         \\

                      &                    &                    &                   &                                               
 & TR     &  0.1353    &   2   &   638.4568  &   0.2207   &    1.1018    &  14.0767         \\ \hline
\multirow{5}{*}{JOS1} & \multirow{5}{*}{2} & \multirow{5}{*}{4} & \multirow{5}{*}{$[-2,2]^4$} & \multirow{5}{*}{Convex} 
 
& CN     &   0.2930   &   6   &  5.9827   &   0.9643   &   1.0148      &  0.7858         \\ 

                      &                    &                    &                   & 
                      
& SD     &   0.0851   &  13   &  5.9827   &   0.9643     &  1.0545      &   0.7858        \\    
                      &                    &                    &                   &                                               
& QN     &   0.1011   &    3  &  5.9827   &  0.9643    &     1.0148    &   0.7858        \\

                      &                    &                    &                   &                                               
& CG     &  0.2554    &   3   &  5.9827   &   0.9643   &     1.0148    &  0.7858         \\

                      &                    &                    &                   &                                               
 & TR     &    0.1135   &   3   &  5.9827   &  1.0000    &   0.8462     & 5.0000          \\ \hline
\multirow{5}{*}{IKK1} & \multirow{5}{*}{3} & \multirow{5}{*}{2} & \multirow{5}{*}{$[-50,50]^2$} & \multirow{5}{*}{Convex} 

& CN     &  0.4623    &  8    &  1.3126$\times 10^{10}$   &  0.2874     &  1.0040      &    3.3064$\times 10^{3}$       \\ 

                      &                    &                    &                   & 
                      
& SD     &  0.0028    &   1   &   1.2753$\times 10^{10}$   &  0.1609    &   1.1821     &      3.3064$\times 10^{3}$    \\    
                      &                    &                    &                   &                                               
& QN     &   0.0573   &    1  &   1.2752$\times 10^{10}$  &  0.2874    &    1.1681    &     0.6281$\times 10^{3}$     \\

                      &                    &                    &                   &                                               
& CG     &   0.1051    &   2   &  1.3158$\times 10^{10}$   &   0.2874    &   1.0368     &     0.4471$\times 10^{3}$     \\

                      &                    &                    &                   &                                               
 & TR     &   0.1094   &  4    &  1.3294 $\times 10^{10}$  &   0.2874    &    0.9792     &      0.6492$\times 10^{3}$    \\ \hline
\multirow{5}{*}{VFM1} & \multirow{5}{*}{3} & \multirow{5}{*}{2} & \multirow{5}{*}{$[-2, 2]^2$} & \multirow{5}{*}{Convex}

& CN     &    0.3923   &  4    &   8.8653   &  0.3553    &    0.9086     & 1.0002          \\ 

                      &                    &                    &                   & 
                      
& SD     &  0.0017    &   1   & 11.9956     &  0.4211    &    0.8763    &  0.5972         \\    
                      &                    &                    &                   &                                               
& QN     &  0.0207    &   1   &  11.9956    &   0.4211   &  0.8763      &  0.5972         \\

                      &                    &                    &                   &                                               
& CG     &   0.0396   &  1    &  12.5684   &   0.5395    &   0.8623     &   0.4877        \\

                      &                    &                    &                   &                                               
 & TR     &   0.0344   &   1   &  12.0738   &   0.4079   &  0.8764    &  0.5972         \\ \hline
\multirow{5}{*}{MOP5} & \multirow{5}{*}{3} & \multirow{5}{*}{2} & \multirow{5}{*}{$[-30, 30]^2$} & \multirow{5}{*}{Non-convex} 

& CN     &   0.9826   &   5   &   2.5765$\times 10^{6}$   & 0.3226     &   1.2270     &   2.8391$\times 10^{3}$       \\ 

                      &                    &                    &                   & 
                      
& SD     &  0.0007    &   0   &  2.5765$\times 10^{6}$    &  0.3226    &  1.2266      &    2.7241$\times 10^{3}$     \\    
                      &                    &                    &                   &                                               
& QN     &   0.0232   &   0   &  2.5604$\times 10^{6}$   &  0.3226    &   1.0149     &      0.9250$\times 10^{3}$    \\

                      &                    &                    &                   &                                               
& CG     &   0.0008   &  0    &   2.5675$\times 10^{6}$   &  0.3226    &   1.2265     &   2.7241$\times 10^{3}$        \\

                      &                    &                    &                   &                                               
 & TR     &    0.0350  &   0   &   2.5675$\times 10^{6}$  &  0.3226    &    1.2265    &   2.7241$\times 10^{3}$        \\ \hline
\multirow{5}{*}{MOP7} & \multirow{5}{*}{3} & \multirow{5}{*}{2} & \multirow{5}{*}{$[-400, 400]^2$} & \multirow{5}{*}{Convex} 

& CN     &   3.1269   &   36   &  0.4801   & 0.2101      &  1.2841       &  0.9500          \\ 

                      &                    &                    &                   & 
                      
& SD     &  0.1701    &  147    & 0.4915    &  0.1594    &   1.2117     & 0.9629          \\    
                      &                    &                    &                   &                                               
& QN     &  0.2147    &  7.5    &  0.2733   &  0.3116    &   1.1937     &  0.3830         \\

                      &                    &                    &                   &                                               
& CG     &   0.6366   &    11  &   0.2895   &   0.3551   &   1.3533     &  0.4784         \\

                      &                    &                    &                   &                                               
 & TR     &   0.1989    &   9   &  0.3870   &   0.0217   &   1.0133     & 0.9471          \\ \hline
\multirow{5}{*}{SLCDT2}& \multirow{5}{*}{3} & \multirow{5}{*}{10} & \multirow{5}{*}{$[-1, 1]^{10}$} & \multirow{5}{*}{Non-convex} 

& CN     &  20.0960    &   199   &  0.4574$\times 10^{3}$    & 0.1111     &  1.2284      &    20.2194       \\ 

                      &                    &                    &                   & 
                      
& SD     &  0.0101    &   9   &  3.2004$\times 10^{3}$   &  0.2222    &  0.9982      &     4.5231      \\    
                      &                    &                    &                   &                                               
& QN     &   0.1015   &   8   &   3.3851$\times 10^{3}$   & 0.2222     &   1.0243      &   4.3223        \\

                      &                    &                    &                   &                                               
& CG     &   2.4207   &   26   &  3.4056$\times 10^{3}$   & 0.2222     &   1.0193     &    4.4557       \\

                      &                    &                    &                   &                                               
 & TR     &    0.0945  &  3    & 3.3930$\times 10^{3}$    &  0.2222    &  1.0758      &     5.3611       \\ \thickhline
\end{tabular}
\end{table}
\end{landscape}

\section{Conclusion and Final Remarks} \label{section-conclusion}
This paper presented a novel cubic regularization approach to Newton's method for solving unconstrained vector optimization problems, where the components of the objective function have Lipschitz Hessian. After proposing a cubic regularized Newton direction $d_M(\bar x)$ in \eqref{cubic_direction}, in Theorem \ref{stationarity_and_beta_value}, we used the value of $\beta_M$, as defined in \eqref{beta_function}, to characterize stationarity of the point $\bar x$. To find if $d_M(\bar x)$ is a descent direction at $\bar x$, we have derived a result in Theorem \ref{decrement_if_M_bigger_than_L} which shows that although $d_M(\bar x)$ may not be a $K$-descent direction at $\bar{x}$ (see Remark \ref{aux1_17_02}), there holds $f(\bar x + d_M(\bar x)) \prec f(\bar x)$ if $M \ge L$. With the help of the result in Theorem \ref{decrement_if_M_bigger_than_L}, we have shown that a simple three-step process (see Page \pageref{three-step-process}) for a cubic regularized Newton scheme can be identified. However, this process is heavily dependent on a good estimation of $L$-value, and also merely $\beta_M(x) = 0$ is found not to be a good stopping condition. Thus, we next aimed at an effective computation of $d_M(x)$ and at bypassing the use of $M \ge L$ and $\beta_M(x) = 0$. \\

Although the definition of $d_M(x)$ (see \eqref{cubic_direction}) looks simple, we have observed that the pertaining optimization problem appears with a non-convex objective function $q_M(x, \cdot)$. However, in Theorem \ref{d_M_is_one_dim_convex_min}, we have found that the minimization problem in \eqref{cubic_direction} is equivalent to a one-dimensional convex optimization problem. An explicit formula for computing $d_M(x)$ is then found in \eqref{10_01_25_aux2}. With the help of this $d_M(x)$ and an auxiliary function $h_M$ (see Lemma \ref{aux3_18_Feb_2025}), we derived a few side results to arrive at an effective stopping condition $\mu_M(\bar x) = 0$ that ensures that $\bar x$ is a weakly efficient point of \eqref{vop}.   \\

With the help of the computed $d_M(x)$ in \eqref{10_01_25_aux2} and $\mu_M$ in \eqref{mu_M_definition}, we have provided a step-wise cubic regularized Newton process in Algorithm \ref{algo} to identify weakly efficient points of \eqref{vop}. Thereafter, the well-definedness of the algorithm has been provided, and it is found that, in general, the method may not have descent property (see Remark \ref{no_descent_property_remark}). However, in Theorem \ref{x_k_conv_to_weak_min}, it has been derived that under a very mild restriction ($L_0 \ge \frac{2L}{3}$), any sequence $\{x^k\}$ generated by Algorithm \ref{algo} has the $K$-descent property: $f(x^{k + 1}) \prec f(x^k)$ for all $k$. It has also been found that there is a global convergence property of any sequence generated by the proposed method, and the rate of convergence is $O(k^{- 2/3})$  (see Theorem \ref{aux2_28_01_25}). We have further found in Theorem \ref{theorem_max_is_skipped} that none of the subsequential limit of $\{x^k\}$ is a non-weakly efficient point (see Remark \ref{non-weakly-effieient-remark}). Finally, in Theorem \ref{theorem_3_7}, second-order local convergence of the method has been found.   \\

Note that if an estimate of the Lipschitz constant $L$ is available, then the three steps that are mentioned on Page \pageref{three-step-process},  immediately after Remark \ref{aux1_17_02}, would be sufficient for identifying weakly efficient points of \eqref{vop} only through the computation of the cubic regularized steps $d_{M_k}(x^k)$. As a good estimate of $L$-value may not be an easy task, to bypass using $M \ge L$, the function $h_M$ (see Lemma \ref{aux3_18_Feb_2025}) has been used in Step 2 of Algorithm \ref{algo}. Step 2 is found to be influential in arriving at the global convergence result reported in Theorem \ref{aux2_28_01_25}. Future research can endeavor to modify Step 2 without using $h_M$ but by utilizing ``$M \ge L$ and a suitable effective way to estimate $L$." At an iterate $x^k$, a crude way of estimating $L$ for $f : \mathbb{R}^n \to \mathbb{R}^m$ around $x^k$ can be as follows:  \\ 

\noindent
\textbf{An estimation of $L$.}
\begin{enumerate}[Step 1.]
\item For each $i = 1, 2, \ldots, m$, choose an initial guess $L_i'$ of a Lipschitz constant of $L_i$ of $f_i : \mathbb{R}^n \to \mathbb{R}$. 

\item Choose $\theta_1 > 1$ and $\theta_2 < 1$. 

\item For each $i = 1, 2, \ldots, m$, perform the following steps: 
\begin{enumerate}[(i)]
\item (Estimate a sufficiently large Lipschitz constant for $f_i$) \\ 
While $|f(x^k) - f(x^{k + 1})| > L_i' \|x^k - x^{k - 1}\|$ do $L_i' \leftarrow \theta_1 L_i'$. 

\item (Refine the large Lipschitz constant to smaller ones) \\ 
While $|f(x^k) - f(x^{k + 1})| \le L_i' \|x^k - x^{k - 1}\|$ do $L_i' \leftarrow \theta_2 L_i'$. 
\end{enumerate}

\item Return $L := \max\{L_1', L_2', \ldots, L_m'\}$ as an estimate of the Lipschitz constant of $f$ around $x^k$. \\ 
\end{enumerate}

The rate of (global) convergence of any sequence generated by the proposed cubic regularized Newton method is $O(k^{-2/3})$ (Theorem \ref{aux2_28_01_25}). Also, the local convergence rate of Algorithm \ref{algo} is found to be q-quadratic (Theorem \ref{theorem_3_7}).  Future research can find the rate of (global) convergence and order of (local) convergence of Algorithm \ref{algo} if Step 2 is modified by utilizing $M \ge L$, where $L$ is given by the above or other estimation. Furthermore, a complexity analysis to find an $L$ can be done in the future.  \\

Note that apart from an effective way to estimate $L$, we also need a computational strategy to find $M_k$ at $x^k$ for Step 2 of Algorithm \ref{algo}. In Remark \ref{process_to_evaluate_M_k}, we have provided a method to find an $M_k$. One may think of the following drawbacks of the method. ``This method may blow up the value of $M_k$. Thus, if we loosely take a very big value of $M_k$, then such $M_k$ will make the factors $\frac{2}{M_k + L}$ and $\frac{2}{M_k + 2L}$ in the computation of $\mu_{M_k}$ (see \eqref{mu_M_definition}) very small. Thereby, $\mu_{M_k}(x^k)$ may become very small yet $\min_{\xi \in C} \|\la Jf(x^k), \xi \ra \|$ is far away from zero. Hence, a very big value of $M_k$ may terminate the Algorithm \ref{algo} at a point $x^k$ that may not be a stationary point." However, note that the value of $M_k$ identified by the method in Remark \ref{process_to_evaluate_M_k} can never exceed $2L$ since by \eqref{aux5_06_03_25}, for any $M_k \ge L$, we have 
\[\max_{\xi \in C} \la f(x^k + d_{M_k}(x^k)), \xi \ra \le h_M(x^k). \]
Future research may be done on rigorous complexity analysis of this strategy for choosing $M_k$ for Step 2 of Algorithm \ref{algo}. \\

Although Theorem \ref{theorem_3_7} gives a $q$-quadratic rate of the local convergence of the sequences generated by the proposed Algorithm \ref{algo}, the result heavily depends on the condition $\gamma_0 \le \frac{1}{4}$. Future research can analyze if such a condition on the initial point can be relaxed. Also, note that the denominator of $\gamma_k$ is $(\min_{\xi \in C} \lambda_1(\la \nabla^2 f(x^k), \xi \ra))^2$, which makes the evaluation of $\gamma_k$ difficult. It would be better if we could replace the denominator of $\gamma_k$ simply by $\lambda^2_1 \left( \la \nabla^2 f(x^k), \eta^k(x^k) \ra \right).$ However, with such modification on the definition of $\gamma_k$, we could not derived the result. Future research can be done in this direction. A major difficulty in proving the results in Theorem \ref{theorem_3_7} with 
\[
\gamma_k :=  \frac{L \left\| \la Jf(x^k), \eta^k \ra \right\|}{ \lambda^2_1 \left(\la \nabla^2 f(x^k), \eta^k \ra \right)}
\]
is to derive 
\begin{equation}\label{aux9_23_02}
\la \nabla^2 f(x^{j + 1}), \eta^{j + 1} \ra \ge  \la  \nabla^2 f(x^j), \eta^j \ra - \alpha r_{M_j} (x^j) I_n 
\end{equation}

\medskip
\noindent
for some $\alpha > 0$. This inequality is somewhat similar to \eqref{aux2_22_02_25}, and without such an inequality, identification of local quadratic convergence of $\{x^k\}$ may become very difficult. To arrive at an inequality similar to \eqref{aux9_23_02}, we may proceed in the following manner. On the set $B$ as defined in Theorem \ref{theorem_3_7}, define two functions $\phi: B \to \mathbb{R}$ and $\psi: B \to \mathbb{R}$ by 
\[ \phi(x) := \min_{\xi \in C} \lambda_1(\la \nabla^2 f(x), \xi\ra) \text{ and }  \psi(x) := \max_{\xi \in C} \lambda_1(\la \nabla^2 f(x), \xi \ra). \]
Since $C$ is compact and for any given $x \in B$, the function 
$ \xi \mapsto \lambda_1(\la \nabla^2 f(x), \xi\ra) $
is continuous on $C$, the functions $\phi$ and $\psi$ are continuous on $B$. 
As $\lambda_1(\la \nabla^2 f(x^j), \xi \ra) > 0$ for all $\xi \in C$ 
and $C$ is compact, there exists $\beta > 0$ such that 
\[
\frac{\lambda_1(\la \nabla^2 f(x^j), \eta^{j + 1} \ra)}{\lambda_1(\la \nabla^2 f(x^j), \eta^{j} \ra)} \ge \frac{ \min_{\xi \in C} \lambda_1(\la \nabla^2 f(x^j), \xi \ra)}{\max_{\xi \in C}\lambda_1(\la \nabla^2 f(x^j), \xi \ra)} = \frac{\phi(x^j)}{\psi(x^j)} \ge \beta. 
\]
This inequality combined with \eqref{aux1_22_02_25} may arrive at an inequality similar to \eqref{aux9_23_02}. Future research may be performed on this modification of $\gamma_k$ and the restriction $\gamma_0 \le \frac{1}{4}$. \\

Although Remark \ref{lemma_d_M_is_1d_convex} shows that $d_M(\bar x)$ exists for any $M > 0$ and $\bar x \in S$, to find $d_M(\bar x)$ or to find a minimum point for the left-hand side of \eqref{10_1_25_aux3}, we need a suitable $r$-value from $\mathscr{D}_{\bar{x}}$ so that $\|d(\bar x, r)\| = r$. 
In the case when $\la \nabla^2 f(\bar x), \xi \ra$ is positive definite for all $\xi \in C$, identification of such an $r$ can be made crudely as follows, which is directed by \eqref{d_bar_x_r} and the definition of $\mathscr{D}_{\bar x}$. Find an $r$ that satisfies 
\begin{align*} 
& \min_{\xi \in C} \lambda_1 (\la \nabla^2 f(\bar x), \xi \ra) + \frac{Mr}{2} \ge 0 \\ 
& \text{ and } \left\| - \left( \sum_{i = 1}^{s(\bar x)} \lambda_i(\bar x) \left\{ \la \nabla^2 f(\bar x), \eta^i(\bar x) \ra + \frac{M}{2} r ~ I_n \right\}\right)^{-1} \sum_{i = 1}^{s(\bar x)} \lambda_i(\bar x) \la Jf(\bar x), \eta^i(\bar x) \ra \right\| = r, \\ 
\text{i.e., } & r \ge - \frac{2}{M} \min_{\xi \in C} \lambda_1 (\la \nabla^2 f(\bar x), \xi \ra) \text{ and } r^2 = \sum_{j = 1}^n  \frac{\sigma^2_j}{(\mu_j + \frac{Mr}{2})^2},  
\end{align*}
where $\mu_1, \mu_2, \ldots, \mu_n$ are eigenvalues of $$Q(\bar x) : = \la  \nabla^2 f(\bar x), \sum_{i = 1}^{s(\bar x)} \lambda_i(\bar x) \eta^i(\bar x) \ra + \frac{M}{2} r ~ I_n$$  
and $(\sigma_1, \sigma_2, \ldots, \sigma_n)^\top$ is the coordinates of $\sum_{i = 1}^{s(\bar x)} \lambda_i(\bar x) \la Jf(\bar x), \eta^i(\bar x) \ra$ with respect to the eigenvectors of $Q(\bar x)$.  
Future research can be done on an effective computation of this $r$-value even when $\la \nabla^2 f(\bar x), \xi \ra$ is not positive definite for some $\xi \in C$. Moreover, note that the cubic Newton step $d_M(\bar x) = d(\bar x, r)$ is such a point that satisfies $\|d_M(\bar x)\| = r$, which is somewhat similar to finding a (trust-region) step inside the trust-region of radius $r$. Future research can find a connection or a comparison between the trust-region step and the cubic Newton step for vector optimization.

Apart from the above-mentioned technical points for the computational strategies, we mention below a few more potential future scopes: 

\begin{itemize}
\item 
Note that in \eqref{cubic_direction} and \eqref{beta_function}, the cubic Newton direction $d_M(x)$ is defined with the help of the Drummond-Svaiter non-linear scalarizing function. Future research can define $d_M(x)$ through other scalarizing functions---Gerstewitz and oriented distance---and analyze the convergence of Algorithm \ref{algo}. 

\item 
In Theorem \ref{x_k_conv_to_weak_min}, we have shown that the subsequential limits of $\{x^k\}$ lie on the same level set of the objective function but could not prove the connectedness of the set of all subsequential limits. Commonly, for descent methods, there holds this connectedness property. Future research can be attempted in this direction.

\item 
To the best of the knowledge of the author, all the existing parameter-free gradient-based methods for vector optimization have been developed under the assumption that the ordering cone $K$ is polyhedral. In fact, the current study is also performed under this assumption. Derivation of a parameter-free gradient-based method for non-polyhedral $K$ is yet an open problem. In the future, the proposed method could be extended in this direction.


\item 
The proposed method can be employed for solving robust vector optimization problems or set optimization problems. Then, a comparison of computational complexities with the existing Newton method in \cite{Ghosh2025newton} or conjugate gradient method in \cite{kumar2024nonlinear} can be done in the future.

\end{itemize}

\vspace{-0.3cm}

\subsubsection*{Acknowledgment}
The author sincerely thanks the two anonymous reviewers for their insightful comments and constructive suggestions, which have significantly enhanced the quality of this paper. 

\vspace{-0.1cm}
\subsubsection*{Funding}
Core Research Grant (CRG/2022/001347) from SERB, India   

\vspace{-0.25cm}

\subsubsection*{Data availability}
There is no data associated with this paper.

\subsubsection*{Declarations}    
The author has no conflict of interest, financial or ethical, to declare. Also, there are no human/animal studies related to the work.





\end{document}